\definecolor{OliveGreen}{rgb}{0,0.6,0}
\numberwithin{equation}{section}
\theoremstyle{plain}
\newtheorem*{theorem*}{Theorem}
\newtheorem{theorem}{Theorem}
\numberwithin{theorem}{section}
\newtheorem{proposition}[theorem]{Proposition}
\newtheorem{lemma}[theorem]{Lemma}
\newtheorem{corollary}[theorem]{Corollary}
\newtheorem{conjecture}[theorem]{Conjecture}
\newtheorem{problem}[theorem]{Problem}
\newtheorem{remark}[theorem]{Remark}
\theoremstyle{definition}
\newtheorem{definition}[theorem]{Definition}
\newtheorem{example}[theorem]{Example}
\newcommand{\C}{\mathbb{C}}
\newcommand{\K}{\mathbb{K}}
\newcommand{\bS}{\mathbb{S}}
\newcommand{\bG}{\mathbb{G}}
\newcommand{\Q}{\mathbb{Q}}
\newcommand{\Z}{\mathbb{Z}}
\newcommand{\PP}{\mathbb{P}}
\newcommand{\V}{\mathbb{V}}
\newcommand{\R}{\mathbb{R}}
\newcommand{\sA}{\mathcal{A}}
\newcommand{\sS}{\mathcal{S}}
\newcommand{\sB}{\mathcal{B}}
\newcommand{\sC}{\mathcal{C}}
\newcommand{\sD}{\mathcal{D}}
\newcommand{\sE}{\mathcal{E}}
\newcommand{\sQ}{\mathcal{Q}}
\newcommand{\sL}{\mathcal{L}}
\newcommand{\sT}{\mathcal{T}}
\newcommand{\sM}{\mathcal{M}}
\newcommand{\sN}{\mathcal{N}}
\newcommand{\sO}{\mathcal{O}}
\newcommand{\sU}{\mathcal{U}}
\newcommand{\sV}{\mathcal{V}}
\newcommand{\sW}{\mathcal{W}}
\newcommand{\sX}{\mathcal{X}}
\newcommand{\sY}{\mathcal{Y}}
\newcommand{\sZ}{\mathcal{Z}}
\newcommand{\bn}{\mathbf{n}}
\newcommand{\bd}{\mathbf{d}}
\newcommand{\mR}{\mathsmaller{\mathbb{R}}}
\newcommand{\mT}{\mathsmaller{\mathsf{T}}}
\newcommand{\mC}{\mathsmaller{\C}}
\newcommand{\balpha}{\boldsymbol{\alpha}}
\newcommand{\bomega}{\boldsymbol{\omega}}
\newcommand{\codim}{\mathrm{codim}}
\newcommand{\dd}{\textrm{\normalfont{d}}}
\DeclareMathOperator{\dist}{dist}
\DeclareMathOperator{\pr}{pr}
\DeclareMathOperator{\EDeg}{EDdegree}
\DeclareMathOperator{\gEDeg}{gEDdegree}
\DeclareMathOperator{\EDef}{EDdefect}
\author[K. Kozhasov]{Khazhgali Kozhasov}
\address{Friedrich-Schiller-Universit\"at Jena, Institut f\"ur Mathematik, Ernst-Abbe-Platz 2 07737 Jena, Germany}
\curraddr{Laboratoire J.-A. Dieudonn\'e \\ Universit\'e C\^ote d'Azur \\ Parc Valrose, 06108 Nice, France }
\email{khazhgali.kozhasov@univ-cotedazur.fr}
\author[A. Muniz]{Alan Muniz}
\address{Universidade Estadual de Campinas (UNICAMP) \\ Instituto de Matem\'atica, Estat\'istica e Computa\c{c}\~ao Cient\'ifica (IMECC) \\ Rua S\'ergio Buarque de Holanda, 651 \\ 13083-970 Campinas-SP, Brazil}
\curraddr{Departamento de Matem\'atica \\ Centro de Ci\^encias Exatas e da Natureza \\ Universidade Federal de Pernambuco \\ Recife - PE, CEP 50740-560, Brazil }
\email{alan.nmuniz@ufpe.br}
\author[Y. Qi]{Yang Qi}
\address{INRIA and CMAP, \'Ecole Polytechnique, IP Paris, CNRS, France}
\email{yang.qi@inria.fr}
\author[L. Sodomaco]{Luca Sodomaco}
\address{Department of Mathematics, KTH Royal Institute of Technology, SE-100 44 Stockholm, Sweden}
\curraddr{Max Planck Institute for Mathematics in the Sciences, Leipzig, Germany}
\email{luca.sodomaco@mis.mpg.de}
\subjclass[2020]{}
\keywords{}
\title[Minimal algebraic complexity of the rank-one approximation problem]{On the minimal algebraic complexity of the rank-one approximation problem for general inner products}
\date{}
\begin{document}

\begin{abstract}
We study the algebraic complexity of Euclidean distance minimization from a generic tensor to a variety of rank-one tensors. The Euclidean Distance (ED) degree of the Segre-Veronese variety counts the number of complex critical points of this optimization problem.
We regard this invariant as a function of inner products. We prove that Frobenius inner product is a local minimum of the ED degree, and conjecture that it is a global minimum. We prove our conjecture in the case of matrices and symmetric binary and $3\times 3\times 3$ tensors. We discuss the above optimization problem for other algebraic varieties, classifying all possible values of the ED degree. Our approach combines tools from Singularity Theory, Morse Theory, and Algebraic Geometry.
\end{abstract}

\maketitle

\section{Introduction}
The best rank-one approximation problem is classically studied in Multilinear Algebra and Optimization \cite{EckartYoung, Banach, Friedland2011OnBR, GKT2013, Stu22icm}. 
Given finite-dimensional real vector spaces $V^{\mR}_1, \ldots, V^{\mR}_k$ and (the square of) some norm $q$ on their tensor product $V_1^{\mR}\otimes\cdots\otimes V^{\mR}_k$, the problem consists in finding global minima of the distance function $(x_1,\ldots, x_k)\mapsto q(u-x_1\otimes\cdots\otimes x_k)$ from a given \emph{data tensor} $u\in V_1^{\mR}\otimes\cdots\otimes V^{\mR}_k$ to the set of rank-one (also known as, decomposable) tensors,
\begin{align}\label{eq:BROAP}
 \min_{x_i\in V^{\mR}_i,\, i\in[k]} q(u-x_1\otimes\cdots\otimes x_k).
\end{align}
Any minimizer in \eqref{eq:BROAP} is referred to as \emph{a best rank-one approximation of $u$} (with respect to $q$).

The above problem has numerous applications in Mathematics and other areas such as Data Science \cite{Udell2017WhyAB, GJZh, kohn2022geometry,kohn2023geometry} and Signal Processing \cite{KR, DELATHAUWER200431, Co, QCL16, SDFHPF}. 
Furthermore, optimization of a homogeneous polynomial over the Euclidean sphere in $V^{\mR}$ can be modeled as \eqref{eq:BROAP} for a \emph{symmetric} tensor $u\in V^{\mR}\otimes\cdots\otimes V^{\mR}$, see \cite{Nie2013SemidefiniteRF, Kozhasov2017OnFR, KlerkLaurent}.
Furthermore, in Quantum Information Theory, an analog of \eqref{eq:BROAP} for complex tensors represents one of the possible measures of \emph{quantum entanglement of multipartite states} \cite{Wei2003GeometricMO, Derksen2017TheoreticalAC, DM2018}. 

In the case of matrices (i.e., tensors of order $k=2$), problem \eqref{eq:BROAP} is, in general, much more manageable. Thus, when $q$ is associated with the \emph{Frobenius} (also known as \emph{trace}) \emph{inner product}, the celebrated Eckart-Young theorem \cite{EckartYoung} gives a recipe for finding a best rank-one approximation to a given \emph{data matrix} $u\in V^{\mR}_1\otimes V^{\mR}_2$: one simply needs to truncate the Singular Value Decomposition of $u$.
There is no direct way to solve the analogous problem for tensors of order $k\geq 3$, which is known to be NP-hard \cite{Nesterov, HiLi}.
Despite this and the fact that a \emph{best rank-$r$ approximation} of a high-order tensor might not exist for $r>1$ \cite{deSilva2008tensor}, problem \eqref{eq:BROAP} lies in the core of all existing low-rank approximation schemes for tensors \cite{KoldaBader, GKT2013}.
So, for example, \emph{the deflation method}, i.e., \emph{successive rank-one approximations}, is used to find a reasonable low-rank approximation of a tensor \cite{daSilva2015iterative}.
Thus, theoretical advances in understanding problem \eqref{eq:BROAP} are of great importance.

When $q$ in \eqref{eq:BROAP} is the (square of the) norm associated with an inner product $Q$ on the space of tensors $V^{\mR}_1\otimes \dots\otimes V^{\mR}_k$,
a best rank-one approximation, being a minimizer, is a critical point of the distance function to the manifold of rank-one tensors.
{\em Critical rank-one approximations} (to a data tensor $u$) are by definition critical points of the distance function in \eqref{eq:BROAP}, see \cite{FO, DH2016, DOT, HTT2023}.
These are precisely real solutions to a certain system of polynomial equations in the entries of the rank-one tensor. When the data tensor $u$ is general enough, the number of complex solutions of the same system of polynomial equations is finite and known as the \emph{Euclidean Distance (ED) degree} of the algebraic variety of complex rank-one tensors, see \cite{DHOST} and Section \ref{sec: prelim} for more details.
One also refers to it as \emph{the algebraic complexity} of the rank-one approximation problem, since the ED degree accounts for all (possibly complex) solutions to the system of polynomial equations defining a best rank-one approximation as a critical point of the distance function. In particular, it provides an upper bound on the number of critical rank-one approximations to a general data tensor.
Thus the study of ED degree is strongly motivated by and has deep applications in both Optimization and Data Science, namely, a target function with fewer critical points is potentially easier to minimize via local optimization methods such as gradient descent, Newton method, and their variations \cite{ge2022optimization,kohn2023function}.
In the present work, we regard the ED degree as a function of the inner product $Q$ and attempt to minimize it. As we discuss below, changing the inner product may be beneficial in designing a more convenient cost function, i.e., one can hope to find $Q$ that gives rise to the smallest algebraic complexity of the rank-one approximation problem.
For this reason, we introduce the {\em ED degree map} in \eqref{eq: ED degree map}.

We conjecture that, in the minimization problem~\eqref{eq:BROAP}, the Frobenius inner product achieves the smallest possible ED degree, see Section \ref{sec: product metrics tensor spaces} and Conjecture \ref{conj: main}.
This problem turns out to be nontrivial already in the case of matrices, for which we are able to settle it.
In fact, in Section \ref{sec: minimum ED degree Segre-Veronese} we show that, with respect to an arbitrary fixed inner product, any sufficiently general data matrix of size $n\times m$ possesses at least $\min(n,m)$ many critical rank-one approximations. Hence, the algebraic complexity of the best rank-one approximation problem for matrices cannot be smaller than $\min(n,m)$, which equals the ED degree of the Frobenius inner product.
Our result can be considered a qualitative version of the Eckart-Young theorem with respect to a general inner product. In practical terms, it asserts that, topologically, the distance function from a general data matrix to the manifold of rank-one matrices cannot be too simple, as it has at least one critical point of each \emph{Morse index}, see Theorem \ref{thm: lower bound Q-distance function}.

Symmetric tensors of order $d$ are in one-to-one correspondence with degree $d$ homogeneous polynomials, and (up to a sign) a symmetric rank-one tensor corresponds to the $d$-th power of a linear form.
It has been noticed in~\cite[Example~2.7]{DHOST} that the ED degree of the variety consisting of symmetric rank-one tensors would change under different inner products.
As a special case, the Frobenius inner product of two symmetric tensors equals the so-called \emph{Bombieri-Weyl} inner product of the associated homogeneous polynomials.
In this case, a best rank-one approximation of a symmetric data tensor $u$ can always be chosen to be symmetric \cite{Banach, Friedland2011BestRO} and the minimization problem \eqref{eq:BROAP} is equivalent to optimizing the associated homogeneous polynomial over the unit sphere in $V^\R$, see \cite[Thm. $2.19$]{QiLuo}.
The algebraic complexity, given by the ED degree of the variety of $d$-th powers of complex linear forms, provides a sharp upper bound on the number of critical points of the mentioned polynomial optimization problem, see \cite{Kozhasov2017OnFR}.
A particular case of Conjecture~\ref{conj: main} then asserts that the Bombieri-Weyl inner product achieves the smallest possible ED degree among all inner products on the space of homogeneous polynomials, see Theorem \ref{thm: symmetric_matrices}.

In addition to proposing various optimization problems with different complexities, changing inner products would give rise to various normal distributions on the space of tensors, thus causing different behaviors of a random tensor. More precisely, the space of real order-$d$ symmetric tensors, denoted by $S^d V^{\R}$, admits a $\lceil d\,/\,2\rceil$-dimensional family of orthogonally invariant inner products generalizing $Q_F$ \cite{Kostlan}, and each such inner product admits a unique standard normal distribution on $S^d V^\R$ associated with it. Kostlan \cite{Kostlan} computed the expected volume of a random projective hypersurface given by a polynomial sampled under these normal distributions on $S^d V^\R$. It follows from \cite[Thm $5.4$]{Kostlan} that there exist invariant scalar products giving rise to a smaller (respectively, larger) expected volume of a random hypersurface than $Q_F$. This shows that the Frobenius inner product, although the most natural inner product to take, does not always provide us the most expected result. In this regard, it would be interesting to compare the so-called \emph{average ED degree} (see \cite[Section $4$]{DH2016} and \cite{Breiding2017HowME}) of the variety of rank-one symmetric tensors with respect to different orthogonally invariant inner products on $S^d V^\R$.

Distance minimization is not peculiar only to varieties of rank-one matrices and tensors. More generally, one can consider an arbitrary algebraic subset $\sX^\mR$ of a real vector space $V^\mR$. Again, denoting by $q$ the (square of the) norm associated with an inner product $Q$ on $V^\mR$, we are interested in the optimization problem
\begin{equation}\label{eq: min distance function}
\min_{x\in \sX^\mR}q(u-x)\,,
\end{equation}
where $u\in V^\mR$ is a given data point. 
Again, one defines the ED degree of the algebraic variety $\sX$, the complexification of $\sX^{\mR}$, as the number of complex critical points of the distance function in \eqref{eq: min distance function} for a general $u\in V^{\mR}$. 
As was shown in \cite{DHOST}, the ED degree does not depend on $Q$ as long as it is general enough.
This number, called the \emph{generic ED degree} of $\sX$, gives an upper bound on the ED degree of $\sX$ with respect to any other inner product $Q$ on $V^{\mR}$.
The \emph{ED defect} of $\sX$ with respect to $Q$ is then defined as the difference between the generic ED degree of $\sX$ and the ED degree of $\sX$ relative to $Q$.
Conjecture \ref{conj: main} can be equivalently stated as follows: the largest ED defect of the variety of rank-one tensors is achieved for the Frobenius inner product.
An elegant formula for the ED defect was discovered in \cite{maxim2020defect}.
In Section \ref{sec: ED defects} we exploit it to analyze ED defects of some special varieties, such as quadratic hypersurfaces, rational normal curves, and Veronese surfaces. 
In some cases, we provide a complete classification of possible ED defects.
These results, in particular, give further evidence to our Conjecture \ref{conj: main} beyond the matrix case.

Another goal of this work is to link our results on the defectivity of the ED degree to the study of the {\em extended ED polynomial}, a version of the ED polynomial studied in \cite{ottaviani2020distance} where the coefficients of the inner product $Q$ are considered as additional parameters, see Definition \ref{def: extended ED polynomial}. For a fixed $Q$, the ED polynomial of $\sX$ depends only on a distance parameter $\varepsilon$ and describes the locus of data points having distance $\varepsilon$ from $\sX^\mR$. We also apply the properties of the extended ED polynomial to show in Theorem \ref{thm: ED degree map is locally constant} that the ED degree map $\Phi_\sX$ of \eqref{eq: ED degree map} is {\em lower semicontinuous}. This allows us to prove in Theorem \ref{thm: local minimality Frobenius ED degree} a ``local'' version of Conjecture \eqref{conj: main} for any Segre-Veronese variety.

\subsection*{Main contributions}
We investigate how ED degrees change with the inner product. After giving an explicit formula for the generic ED degree of Segre-Veronese varieties and presenting concrete numerical experiments, we propose Conjecture~\ref{conj: main} which states that, among all inner products on the space of tensors of a given format, the Frobenius inner product gives rise to the smallest ED degree (algebraic complexity) for the optimization  problem~\eqref{eq:BROAP}. In Section~\ref{sec: product metrics tensor spaces}, we prove Theorem~\ref{thm: local minimality Frobenius ED degree} demonstrating the ED degree induced by the Frobenius inner product gives at least the local minimum of the ED degrees. Section~\ref{sec: minimum ED degree Segre-Veronese} is devoted to the proofs of Conjecture~\ref{conj: main} for the matrix and the symmetric matrix cases. We study the ED defects in more detail for quadric hypersurfaces, rational normal curves, and Veronese surfaces in Section~\ref{sec: ED defects}. These results aim to generalize the study of the ED defects of the Segre-Veronese varieties to more general projective varieties. In Section~\ref{sec: ED polynomial}, we present a geometric description of the variety consisting of those inner products whose associated ED degrees are strictly less than the generic ED degree. This corresponds to the vanishing locus of the leading coefficient of the extended ED polynomial given in Definition \ref{def: extended ED polynomial}. As explained in Corollary \ref{cor: main conj revisited},
%and in the computational Example \ref{ex: extended ED polynomial plane conic}
further study of the vanishing loci of all coefficients of the extended ED polynomial of a Segre-Veronese variety (or any other algebraic variety) would help us understand better the locus of inner products that induce smaller ED degrees.

The Macaulay2 code \cite{GS} used in this work can be found at 
\begin{center}
    \url{https://github.com/sodomal1/minimal-ED-degree.git}
\end{center}

\section*{Acknowledgements}

This work is partially supported by the Thematic Research Program {\em``Tensors: geometry, complexity and quantum entanglement''}, University of Warsaw, Excellence Initiative - Research University, and the Simons Foundation Award No. 663281 granted to the Institute of Mathematics of the Polish Academy of Sciences for the years 2021-2023. We are thankful for the support and great working conditions.
AM was supported by INCTmat/MCT/Brazil, CNPq grant number 160934/2022-2.
LS was supported by a KTH grant from the Verg Foundation and Brummer \& Partners MathDataLab.
We are also grateful to Jose Israel Rodriguez for a useful discussion.

\section{Preliminaries}\label{sec: prelim}

In this section, we briefly recall basic definitions and necessary results in the research concerning the Euclidean Distance degree.
In particular, in Subsection~\ref{sec: Morse theory} we include basic materials on Morse theory, which play important roles in the later sections. For further details from the perspective of algebraic geometry, we refer to \cite{DHOST}.

\subsection{Algebraic complexity of Euclidean Distance optimization}\label{subsec: ED degree ED polynomial}

Let $V$ be an $(N+1)$-dimensional complex vector space with a real structure. Its real part $V^\mR$ is equipped with a positive definite symmetric bilinear form $Q\colon V^\mR \times V^\mR\to\R$, hence the pair $(V^\mR, Q)$ is a Euclidean space. The quadratic form associated with $Q$ is denoted by $q$ with $q(x)=Q(x,x)$ for all $x\in V^\mR$. 
We extend $Q$ and $q$ to a symmetric bilinear form and, respectively, a quadratic form on the complex space $V$, denoting these by the same letters. 
Furthermore, we denote by $\sQ$ the complex quadric hypersurface defined by the equation $q(x)=0$, and it is commonly referred to as the {\em isotropic quadric}. We may regard $\sQ$ as an affine cone in $V$ or as a projective variety in $\PP(V)=\PP^N$.
Note that, since $Q$ is positive definite, the real locus $\sQ^\mR=\sQ\cap\PP(V^\mR)$ of $\sQ$ is empty.

For a real projective variety $\sX\subset\PP^N$ we denote by $\sX^\mR\subset\PP^N_{\mR}=\PP(V^\mR)$ its real part. The \emph{affine cone} over $\sX$ is denoted by $C\sX\subset V$. Note that $C\sX$ is a real affine variety, and the set of its real points satisfies $(C\sX)^\mR=C\sX^\mR\subset V^\mR$. Given a \emph{data point} $u\in V^\mR$, we consider the {\em distance function}
\begin{equation}\label{eq: distance function cone}
 \dist_{C\sX^\mR,u}^Q\colon C\sX^\mR\to\R\,,\quad x\mapsto q(u-x)\,.
\end{equation}
The main optimization problem is
\begin{equation}\label{eq: main optimization problem}
 \min_{x\in C\sX^\mR}\dist_{C\sX^\mR,u}^Q(x)\,.
\end{equation}
The algebraic relaxation of the above problem consists of studying all critical points of $\dist_{C\sX^\mR,u}^Q$, seen as a polynomial objective function defined over the complex variety $C\sX$ and taking complex values. Indeed, the smooth local minimizers of the distance function sit among these critical points. A point $x$ on the (Zariski open) subset $(C\sX)_{\mathrm{sm}}\subset C\sX$ of smooth points of $C\sX$ is {\em critical} for $\dist_{C\sX^\mR,u}^Q$ if the vector $u-x$ belongs to the {\em normal space} of $C\sX$ at $x$:
\begin{equation}\label{eq: def normal space}
 N_xC\sX\coloneqq\{v^*\in V \mid\text{$Q(v^*,v)=0$ for all $v\in T_xC\sX$}\}\,,
\end{equation}
where $T_xC\sX$ is the tangent space of $C\sX$ at $x$.
\begin{remark}
Instead of $C\sX$ we could consider any affine variety and then state problem \eqref{eq: main optimization problem} and define the necessary notions in the same way. However, as we deal only with projective varieties, we restrict to this setting and develop the exposition accordingly.
\end{remark}

The {\em ED correspondence} of $\sX$ is the incidence variety
\begin{equation}\label{eq: ED correspondence}
 \sE(\sX,Q)\coloneqq\overline{\left\{(x,u)\in V\times V\mid\text{$x\in(C\sX)_\mathrm{sm}$ and $u-x\in N_xC\sX$}\right\}}\,,
\end{equation}
where the closure is taken with respect to the Zariski topology on $V\times V$. Denote with $\mathrm{pr}_1$ and $\mathrm{pr}_2$ the projections of $\sE(\sX,Q)$ onto the two factors of $V\times V$.
On the one hand, the first projection $\mathrm{pr}_1\colon\sE(\sX,Q)\to C\sX$ is surjective and provides the structure of a vector bundle over $(C\sX)_{\mathrm{sm}}$, whose rank equals the codimension of $\sX\subset\PP^N$. Consequently, $\sE(\sX,Q)$ is an irreducible variety in $V\times V$ of dimension $N+1=\dim(V)$. On the other hand, the second projection $\mathrm{pr}_2 \colon\sE(\sX,Q)\to V$ is surjective, and its fibers are generically of constant cardinality. By definition, this cardinality $\EDeg_Q(\sX)$ is the {\em Euclidean Distance (ED) degree of $\sX$} with respect to $Q$, and those $x\in\sX_{\mathrm{sm}}$ for which $(x,u)\in \sE(\sX,Q)$ are referred to as {\em ED degree critical points} (relative to $u$). 
The {\em ED discriminant} $\Sigma(\sX,Q)$ is then defined as the Zariski closure in $V$ of the set of critical values of $\mathrm{pr}_2$:
\begin{align}\label{eq: ED disc}
 \Sigma(\sX,Q)\coloneqq\overline{\left\{ u\in V \mid\text{$\dim(\dd_{(x,u)}\mathrm{pr}_2(T_{(x,u)} \sE(\sX,Q)))< N+1$ for some $(x,u)\in \sE(\sX,Q)_{\mathrm{sm}}$}\right\}}\,,
\end{align}
where $\dd_{(x,u)} \mathrm{pr}_2\colon T_{(x,u)} \sE(\sX,Q)\to T_u V=V$ is the differential of $\mathrm{pr}_2$.
In particular, $u\in \Sigma(\sX,Q)$ if there are less than $\EDeg_Q(\sX)$ ED degree critical points relative to $u$.

\begin{definition}\label{def: generic data point}
A data point $u\in V$ is called \emph{generic} if it lies in the complement of $\Sigma(\sX,Q)$.
\end{definition}

\begin{remark}\label{rmk: nondegenerate bilinear forms}
In the above definition of Euclidean Distance degree, $Q$ can be any nondegenerate (not necessarily positive definite or even real) symmetric bilinear form.
In the following, by $S^2 V$ and $S^2 V^\mR$ we denote the space of all symmetric bilinear forms on $V$ and, respectively, on $V^\mR$.
Furthermore, we denote by $\sU\subset S^2V$ and $\sU^\mR\subset S^2 V^\mR$ the open dense subsets of nondegenerate symmetric bilinear forms on $V$ and, respectively, on $V^\mR$.
Finally, (real) positive definite forms in $S^2V$ and $S^2 V^\mR$ constitute a convex cone that is denoted by $S_+^2 V$ and $S^2_+ V^\mR$.
By its definition, $S^2_+ V^\mR$ consists of all inner products on $V^\mR$. 
\end{remark}

In the remainder of this subsection, we introduce a useful polynomial relation that describes the distance function \eqref{eq: distance function cone} and that depends polynomially on the coefficients of the bilinear form $Q$. Similarly to the definition of ED degree, this notion is presented for projective varieties but can be studied for affine varieties that are not necessarily cones. Let $\sX\subset\PP(V)=\PP^N$ be a projective variety, and consider its affine cone $C\sX\subset V$. Attached to the ED correspondence $\sE(\sX,Q)$ defined in \eqref{eq: ED correspondence} is the {\em extended ED correspondence} of $\sX$
\begin{equation}\label{eq: extended ED correspondence any X}
 \sE(\sX)\coloneqq\overline{\left\{(x,u,Q)\in V\times V\times S^2V\mid\text{$x\in(C\sX)_\mathrm{sm}$, $Q\in\sU$, and $u-x\in N_xC\sX$}\right\}}\,,
\end{equation}
where $N_xC\sX$ defined in \eqref{eq: def normal space} depends on $Q$ and $\sU$ is defined in Remark \ref{rmk: nondegenerate bilinear forms}. Furthermore, we define the {\em extended $\varepsilon$-offset correspondence} of $\sX$
\begin{equation}
 \sO\sE_{\varepsilon}(X)\coloneqq\sE(\sX)\cap\{(x,u,Q)\in V\times V\times S^2V \mid q(u-x)=Q(u-x,u-x)=\varepsilon^2\}\,,
\end{equation}
for all $\varepsilon\in\C$. Observe that $\sE(\sX)$ is not a subvariety of the hypersurface $\V(q(u-x)-\varepsilon^2)\subset V\times V\times S^2V$, otherwise we would have that $q(u-x)=\varepsilon^2$ for all $(x,u,Q)\in\sE(\sX)$. This implies that $\sO\sE_{\varepsilon}(X)$ is a subvariety of $\sE(\sX)$ of codimension 1.
The projection of $\sE(\sX)$ onto the last two factors $V\times S^2 V$ has generic zero-dimensional fibers. This implies that the Zariski closure of the projection of $\sO\sE_{\varepsilon}(X)$ onto $V\times S^2 V$ is a hypersurface. We define this variety as the {\em extended $\varepsilon$-offset hypersurface} of $\sX$ and we denote it by $\sO_{\varepsilon}(X)$. If we specialize both $\sO\sE_{\varepsilon}(X)$ and $\sO_{\varepsilon}(X)$ to a certain value of $Q$, we obtain respectively the {\em $\varepsilon$-offset correspondence} and the {\em $\varepsilon$-offset hypersurface} of $\sX$, that were introduced and studied first in \cite{horobet2019offset}. The main definition is the following.

\begin{definition}\label{def: extended ED polynomial}
The {\em extended ED polynomial} of $\sX$ is, up to a scalar factor, the unique generator of the ideal of $\sO_{\varepsilon}(X)$. We denote it by $\mathrm{EDpoly}_{\sX}(u,Q,\varepsilon^2)$.
\end{definition}

In the following, we consider $\mathrm{EDpoly}_{\sX}(u,Q,\varepsilon^2)$ as an element of the polynomial ring $\C[u,Q][\varepsilon]$, that is, we write it in the form
\begin{equation}\label{eq: write extended ED polynomial explicitly}
\mathrm{EDpoly}_{\sX}(u,Q,\varepsilon^2) = \sum_{i=0}^{D}p_i(u,Q)\,\varepsilon^{2i}\,,
\end{equation}
for some integer $D\ge 0$, where the coefficients $p_i(u,Q)$ are polynomials depending on the variables of $u$ and on the variables of the symmetric matrix $M_Q$ representing $Q$.
Since $\sX$ and the isotropic quadric $\sQ$ are defined by homogeneous ideals, the polynomial $\mathrm{EDpoly}_{\sX}(u,Q,\varepsilon^2)$ is homogeneous in $(u,Q,\varepsilon^2)$, in particular the value $\deg(p_i(u,Q))+2i$ does not depend on $i$.
If we fix $Q$, then $\mathrm{EDpoly}_{\sX}(u,Q,\varepsilon^2)$ specializes to the {\em ED polynomial} of $\sX$ introduced in \cite{ottaviani2020distance}. The following is an immediate consequence of \cite[Proposition 2.3]{ottaviani2020distance}.

\begin{proposition}\label{pro: roots ED polynomial}
The roots $\varepsilon^2$ of $\mathrm{EDpoly}_{\sX}(u,Q,\varepsilon^2)$ are precisely of the form $\varepsilon^2=q(u-x)$, where $x$ is a critical point of the polynomial function $x\mapsto q(u-x)$ on $\sX_{\mathrm{sm}}$. In particular, when restricting to positive definite symmetric bilinear forms $Q$, the distance $\varepsilon$ from the real locus $X^\mR$ to a data point $u\in V^\mR$ is a root of $\mathrm{EDpoly}_{\sX}(u,Q,\varepsilon^2)$. Furthermore, $\mathrm{EDpoly}_{\sX}(u,Q,0)=0$ for all $u\in X^\mR$.
\end{proposition}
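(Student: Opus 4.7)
The plan is to derive the proposition from \cite[Proposition 2.3]{ottaviani2020distance} by a specialization argument in $Q$, and to deduce the real statement from a standard coercivity plus Lagrange multiplier argument.

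First I would fix an arbitrary nondegenerate $Q \in \sU$ and specialize the construction: the extended $\varepsilon$-offset correspondence $\sO\sE_\varepsilon(\sX)$ restricts to the $\varepsilon$-offset correspondence relative to $Q$ of \cite{horobet2019offset}, and hence $\mathrm{EDpoly}_\sX(u,Q,\varepsilon^2)$ specializes (up to a scalar factor) to the ED polynomial of $\sX$ with respect to $Q$ as introduced in \cite{ottaviani2020distance}. The cited \cite[Proposition 2.3]{ottaviani2020distance} then says precisely that the roots $\varepsilon^2$ of this univariate polynomial are the critical values of $x \mapsto q(u-x)$ on $\sX_{\mathrm{sm}}$. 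Since $Q \in \sU$ was arbitrary and this characterization is independent of the choice, the first claim of the proposition follows from the definitions of $\sE(\sX)$ and $\sO_\varepsilon(\sX)$: a triple $(u,Q,\varepsilon)$ lies on $\sO_\varepsilon(\sX)$ if and only if there exists $x \in (C\sX)_{\mathrm{sm}}$ with $u - x \in N_x C\sX$ and $q(u-x) = \varepsilon^2$.

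Next I would handle the real positive definite case. Assuming $Q$ is positive definite and $u \in V^\mR$, the function $x \mapsto q(u-x)$ is continuous and coercive on the closed real subvariety $C\sX^\mR \subset V^\mR$, so its infimum is attained at some $x^\ast \in C\sX^\mR$. For generic $u$ the minimizer $x^\ast$ lies in $(C\sX)_{\mathrm{sm}}$, and the Lagrange condition $u - x^\ast \in N_{x^\ast} C\sX$ exhibits $x^\ast$ as a critical point of the squared distance. By the first part, $\varepsilon^2 = q(u - x^\ast)$ is a root of $\mathrm{EDpoly}_\sX(u,Q,\varepsilon^2)$, which gives the distance from $u$ to $\sX^\mR$. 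The case of a singular minimizer is recovered by perturbing $u$ and invoking continuous dependence of the roots on $u$.

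For the last assertion, if $u \in X^\mR \cap (C\sX)_{\mathrm{sm}}$ the choice $x = u$ satisfies $u-x = 0 \in N_x C\sX$ and $q(u-x) = 0$, hence $(u,u,Q) \in \sE(\sX)$ and $(u,Q,0) \in \sO_0(\sX)$, forcing $\mathrm{EDpoly}_\sX(u,Q,0) = 0$; since the smooth locus is Zariski-dense in $X^\mR$ and $\mathrm{EDpoly}_\sX$ is polynomial in $u$, the vanishing extends to all of $X^\mR$. The main subtlety to watch is that the Zariski closures in the definitions of $\sE(\sX)$ and $\sO_\varepsilon(\sX)$ could in principle create spurious components, but this is controlled by the fact that the projection $\mathrm{pr}_2 \colon \sE(\sX) \to V \times S^2V$ has generic zero-dimensional fibers, matching the fixed-$Q$ geometry that underlies \cite[Proposition 2.3]{ottaviani2020distance} and making the extension uniform in $Q \in \sU$.
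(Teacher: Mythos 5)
Your proposal follows the same route as the paper, which simply records the proposition as an immediate consequence of \cite[Proposition~2.3]{ottaviani2020distance} after specializing the extended ED polynomial at a fixed nondegenerate $Q$. You also reproduce that reduction, and the first and third clauses are fine (in the third clause the density should be invoked for $(C\sX)_{\mathrm{sm}}$ over $\C$, where it is automatic, rather than for the real smooth locus of $\sX^\mR$, which need not be Zariski-dense in $C\sX^\mR$).

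However, the supplementary argument you insert for the ``real distance is a root'' clause has a genuine gap. You assert that for generic $u\in V^\mR$ the nearest point $x^\ast\in C\sX^\mR$ is a smooth point of $C\sX$, and handle the singular case by perturbing $u$. But the set of data points $u$ whose unique nearest point on $C\sX^\mR$ is singular can contain a nonempty open subset of $V^\mR$ -- already for a cone over a projective variety whose real singular locus has positive real dimension, there is a full-dimensional normal region where every nearest point lies on that singular stratum. Perturbing $u$ inside this region does not escape it, so your continuity argument never applies there, and ``generic $u$'' (in any of the usual senses: outside the ED discriminant, or outside a proper Zariski-closed set) does not guarantee a smooth minimizer. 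The cited \cite[Proposition~2.3]{ottaviani2020distance} already covers all $u$, including those with singular nearest points, so the clean move is to rely on that citation directly, exactly as the paper does, rather than re-derive the clause via a Lagrange/Morse argument that is only valid when the minimizer happens to be smooth.
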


A first result follows immediately from \cite[Theorem 2.9]{horobet2019offset}.

\begin{proposition}\label{prop: epsilon-degree ED polynomial}
For any fixed nondegenerate symmetric bilinear form $\widetilde{Q}$, the extended ED polynomial of $\sX$ evaluated at $\widetilde{Q}$ has the form
\begin{equation}
 \mathrm{EDpoly}_{\sX}(u,\widetilde{Q},\varepsilon^2) = \sum_{i=0}^{\EDeg_{\widetilde{Q}}(\sX)}p_i(u,\widetilde{Q})\,\varepsilon^{2i}\,. 
\end{equation}
Moreover, $p_{\,\EDeg_{\widetilde{Q}}(\sX)}(u,\widetilde{Q})\in \C[u]$ is not the zero polynomial.
\end{proposition}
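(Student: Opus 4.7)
The plan is to reduce the claim to the non-extended setting and then count roots via Proposition \ref{pro: roots ED polynomial}. Substituting $Q=\widetilde{Q}$ into the definitions of $\sE(\sX)$ and $\sO\sE_{\varepsilon}(\sX)$ recovers the classical $\varepsilon$-offset correspondence and hypersurface studied in \cite{horobet2019offset}; since $\sO_{\varepsilon}(\sX)$ projects dominantly onto $\sU$ in the $Q$-factor, the polynomial $\mathrm{EDpoly}_{\sX}(u,\widetilde{Q},\varepsilon^2)$ does not vanish identically and is, up to a scalar, the classical ED polynomial of $\sX$ relative to $\widetilde{Q}$ of \cite{ottaviani2020distance}. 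Write it as $\sum_{i=0}^{D}p_i(u,\widetilde{Q})\,\varepsilon^{2i}$ with $p_D(u,\widetilde{Q})\not\equiv 0$ in $\C[u]$. The task reduces to proving $D=\EDeg_{\widetilde{Q}}(\sX)$.

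For the upper bound, fix a generic $u\in V\setminus\Sigma(\sX,\widetilde{Q})$. By Proposition \ref{pro: roots ED polynomial}, the roots in $\varepsilon^2$ of the univariate polynomial $\mathrm{EDpoly}_{\sX}(u,\widetilde{Q},\varepsilon^2)$ are precisely the values $\widetilde{q}(u-x_i)$ achieved at the $\EDeg_{\widetilde{Q}}(\sX)$ critical points $x_i\in\sX_{\mathrm{sm}}$ relative to $u$. In particular, this polynomial has at most $\EDeg_{\widetilde{Q}}(\sX)$ distinct roots in $\varepsilon^2$.

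The main obstacle is the matching lower bound, for which one must rule out coincidences among the values $\widetilde{q}(u-x_i)$ and multiplicities of roots for generic $u$. The plan is to work analytically on a small neighborhood of a generic $u$ where the critical points split into single-valued branches $x_1(u),\dots,x_{\EDeg_{\widetilde{Q}}(\sX)}(u)$. If two distinct branches satisfied $\widetilde{q}(u-x_1(u))\equiv \widetilde{q}(u-x_2(u))$, differentiating in an arbitrary direction $v\in V$ and using that $u-x_i(u)$ lies in $N_{x_i(u)}C\sX$ and is hence $\widetilde{Q}$-orthogonal to the tangential derivative $\mathrm{d}x_i(u)(v)\in T_{x_i(u)}C\sX$, the chain rule collapses to $\widetilde{Q}(v,u-x_1(u))=\widetilde{Q}(v,u-x_2(u))$ for every $v\in V$; nondegeneracy of $\widetilde{Q}$ then forces $x_1(u)=x_2(u)$, contradicting $x_1\neq x_2$. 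An analogous transversality argument shows that each root is simple. Therefore for generic $u$ the polynomial carries exactly $\EDeg_{\widetilde{Q}}(\sX)$ distinct simple roots in $\varepsilon^2$, giving $D\ge\EDeg_{\widetilde{Q}}(\sX)$; combined with the upper bound this yields equality and $p_{\EDeg_{\widetilde{Q}}(\sX)}(u,\widetilde{Q})\not\equiv 0$ in $\C[u]$, as claimed.
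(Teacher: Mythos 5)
The paper proves this proposition by simply citing \cite[Theorem 2.9]{horobet2019offset}; you attempt a self-contained argument by counting roots via Proposition \ref{pro: roots ED polynomial}. Your distinctness step is correct and nicely done: on a neighborhood of a generic $u$ the critical points split into holomorphic branches $x_i(u)$, and since $u-x_i(u)\in N_{x_i(u)}C\sX$ is $\widetilde{Q}$-orthogonal to $\dd x_i(u)(v)\in T_{x_i(u)}C\sX$, the directional derivative of $\widetilde{q}(u-x_i(u))$ collapses to $2\widetilde{Q}(v,u-x_i(u))$; nondegeneracy then forces distinct branches to have distinct critical-value functions. This correctly yields $D\ge\EDeg_{\widetilde{Q}}(\sX)$.

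The gap is in the upper bound. ``An analogous transversality argument shows that each root is simple'' is asserted but not given, and the distinctness argument does not adapt: distinctness compares two branches, whereas simplicity is a statement about the multiplicity of a single root of $P(u,\cdot)$, i.e., a condition on $\partial P/\partial(\varepsilon^2)$ along the graph $\varepsilon^2=\widetilde{q}(u-x_i(u))$. A double root corresponds to a singular point of the hypersurface $\V(P)\subset V\times\C$, and ruling this out for generic $u$ requires knowing that $P(u,\widetilde{Q},\varepsilon^2)$ is squarefree. This is exactly where the difficulty lies: the extended ED polynomial is squarefree as a polynomial in $(u,Q,\varepsilon^2)$, but specializing $Q=\widetilde{Q}$ does not automatically preserve squarefreeness. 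You assert in the first paragraph that the specialization is, up to scalar, the (squarefree) classical ED polynomial of \cite{ottaviani2020distance}, but that identification is precisely what needs proving and is the content of \cite[Theorem 2.9]{horobet2019offset}. Without it, you only have $D\ge\EDeg_{\widetilde{Q}}(\sX)$, not the equality.
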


In Section \ref{sec: ED polynomial} we investigate varieties defined by vanishing of the coefficients $p_i(u,Q)$ of \eqref{eq: write extended ED polynomial explicitly} and relate them to Conjecture \ref{conj: main}.

\subsection{Chern classes, duality, and polar classes}

We briefly outline some important notions of intersection theory and projective duality used in this paper.
Let $\sX\subset\PP^N$ be an irreducible projective variety. The $i$-th Chow group of $\sX$ is denoted by $A^i(\sX)$ and is the group of formal linear combinations with integer coefficients of $i$-codimensional irreducible subvarieties of $\sX$, up to rational equivalence. The direct sum $A^*(\sX)=\bigoplus_{i\ge 0}A^i(\sX)$ equipped with the intersection product is the Chow ring (or intersection ring) of $\sX$. We refer to \cite[Chapter 8]{fulton1998intersection} for more details.

\begin{example}\label{ex: Chow ring Segre product}
If $\sX=\PP^N\times\PP^N=\PP(V)\times\PP(V)$ embedded in $\PP(V\otimes V)\cong\PP^{N(N+2)}$, we have that $A^*(\PP^N\times\PP^N)\cong\Z[s,t]/(s^{N+1},t^{N+1})$, where $s$ and $t$ are the pullbacks of the hyperplane classes in the two factors of $\PP^N\times\PP^N$ via the two projection maps.\hfill$\diamondsuit$
\end{example}

Now let $\sE\to\sX$ be a vector bundle on $\sX$ of rank $r$.
For every $i\in\{0,\ldots,r\}$, one can associate to $\sE$ a rational equivalence class $c_i(\sE)\in A^i(\sX)$, called the {\em $i$-th Chern class} of $\sE$. We briefly recall some basic facts about Chern classes, and we refer to \cite[\S3.2]{fulton1998intersection} for more details. If $r=n=\dim(\sX)$, then $c_n(\sE)\in A^n(\sX)$ is a zero-dimensional cycle or a finite sum of classes of points. The sum of the coefficients of this cycle is an integer and is called the {\em top Chern number of $\sE$}. The {\em total Chern class} of $\sE$ is $c(\sE)\coloneqq\sum_{i=0}^rc_i(\sE)$. We recall two formal properties of Chern classes, which are useful tools to compute Chern classes of more complex vector bundles on $\sX$. First, given a Cartier divisor $D$ on $\sX$ and the line bundle $\sL=\sO(D)$, then $c(\sL)=1+D$, or equivalently $c_1(\sL)=D$. Secondly, given a short exact sequence $0\to\sA\to\sB\to\sC\to 0$ of vector bundles on $\sX$, the {\em Whitney sum} property $c(\sA)\cdot c(\sC)=c(\sB)$ holds.

We recall the notion of the dual variety of a projective variety, see \cite[Chapter 1]{GKZ}.

\begin{definition}\label{def: dual varieties}
Let $\sX\subset\PP^N$ be an irreducible projective variety.
A hyperplane $H\subset\PP^N$ is {\em tangent to $\sX$} if $H$ contains the projective tangent space $T_xX$ for some smooth point $x$ of $\sX$.
The {\it dual variety} $\sX^{\vee}$ of $\sX$ is the Zariski closure in $(\PP^N)^*\coloneqq\PP(V^*)$ of the set of all hyperplanes tangent to $\sX$.
A variety $\sX$ is {\it dual defective} if $\codim(\sX^{\vee})>1$. Otherwise, it is {\it dual nondefective}. When $\sX = \PP^N$, then $\sX^{\vee} = \emptyset$ and $\codim(\sX^{\vee}) = N+1$. 
\end{definition}

With the help of a nondegenerate symmetric bilinear form $Q$, one can identify the complex vector space $V$ with its dual $V^*$ via the isomorphism sending a vector $v^*\in V$ to the linear form $u\mapsto Q(v^*, u)$ on $V$. In this way, we identify the dual variety $\sX^{\vee}\subset(\PP^N)^*$ with the variety
\begin{equation}\label{eq: other notion of dual variety}
\sX^{\vee}=\overline{\bigcup_{x\in C\sX_{\mathrm{sm}}}\PP(N_xC\sX)}\subset\PP^N\,,
\end{equation}
where the bar stands for Zariski closure in $\PP^N$.
The variety at the right-hand side of \eqref{eq: other notion of dual variety} is also denoted by $\sX^\perp$, but we write $\sX^{\vee}$ for simplicity of notations. On one hand, this alternative notion of dual variety is useful because it allows us to consider $\sX$ and $\sX^\vee$ in the same space $\PP^N$. On the other hand, identity \eqref{eq: other notion of dual variety} depends on the choice of $Q$, in contrast with the classical notion of dual variety in Definition \ref{def: dual varieties}.

\begin{definition}\label{def: conormal variety}
Let $\sX\subset\PP^N$ be an irreducible projective variety. The {\it conormal variety} of $\sX$ is the incidence correspondence
\begin{equation}\label{eq: def conormal variety}
\sN_\sX\coloneqq\overline{\{(x,H)\in \PP^N\times(\PP^N)^*\mid\text{$x\in\sX_{\mathrm{sm}}$ and $H\supset T_x\sX$}\}}\,.
\end{equation}
\end{definition}
The projection of $\sN_\sX$ to the first factor $\PP^N$ defines a projective bundle over $\sX$ of rank equal to $\codim(X)-1$. This implies that $\sN_\sX$ is an irreducible variety of dimension $N-1$ in $\PP^N\times(\PP^N)^*$.
The projection of $\sN_\sX$ to $(\PP^N)^*$ is the dual variety $\sX^\vee$ of $\sX$.
A fundamental feature of conormal variety is the {\it biduality theorem} \cite[Chapter 1]{GKZ}: one has $\sN_\sX=\sN_{\sX^{\vee}}$. The latter implies $(\sX^{\vee})^\vee = X$, the so-called {\it biduality}.

Similarly, as in \eqref{eq: other notion of dual variety} for the dual variety, in the following, we identify the conormal variety $\sN_\sX$ with the variety in $\PP^N\times\PP^N$
\begin{equation}\label{eq: identity conormal}
\sN_\sX=\overline{\{(x,y)\in \PP^N\times\PP^N\mid\text{$x=[v]\in\sX_{\mathrm{sm}}$ and $y\in\PP(N_vC\sX)$}\}}\,,
\end{equation}
where $N_xC\sX$ is defined in \eqref{eq: def normal space}.
Since $\codim(\sN_\sX)=N+1$ within $\PP^N\times\PP^N$, using Example \ref{ex: Chow ring Segre product}, the rational equivalence class of $\sN_\sX$ in $A^{N+1}(\PP^N\times\PP^N)$ can be written in the form
\[
[\sN_\sX]=\delta_0(\sX)s^Nt+\delta_1(\sX)s^{N-1}t^2+\cdots+\delta_{N-1}(\sX)st^N\,.
\]
The coefficients $\delta_i(\sX)$ are known as the {\em multidegrees of $\sN_\sX$}. Letting $n=\dim(\sX)$, we have the relations (see \cite[Prop. (3), p.187]{kleiman1986tangency})
\begin{equation}\label{eq: identity multidegrees conormal polar degrees}
\delta_i(\sX)=0\quad\forall\,i>n\,,\quad\delta_i(\sX)=\deg(p_{n-i}(\sX))\quad\forall\,i\in\{0,\ldots,n\}\,,
\end{equation}
where $p_j(\sX)$ is the {\em $i$-th polar class} of $\sX$ \cite{piene1978polar}.

If we assume $\sX$ smooth, the invariants $\delta_i(\sX)$ may be computed utilizing the Chern classes $c_i(\sX)$ of $\sX$, namely the Chern classes of the tangent bundle $\sT_\sX\to\sX$. 
Suppose that the embedding $\sX\subset\PP^N$ is given by a line bundle $\sL$, and let $h=c_1(\sL)$.
One computes \cite[\S3]{Holme}:
\begin{equation}\label{eq: Holme}
\delta_i(\sX)=\sum_{j=0}^{n-i}(-1)^{j}\binom{n+1-j}{i+1}c_j(\sX)\cdot h^{n-j}\,,
\end{equation}
where $c_j(\sX)\cdot h^{n-j}$ is equal to $\deg(c_j(\sX))$ times the zero-dimensional class $h^n\in A^n(\sX)$. 
The right-hand side of \eqref{eq: Holme} is always a nonnegative integer. The integer $\mathrm{def}(X)=\codim(\sX^{\vee})-1$ equals the minimum $i$ such that $\delta_i(\sX)\neq 0$. Whenever $\sX$ is dual nondefective, one has
\begin{equation}\label{eq: degdual}
\mathrm{deg}(\sX^{\vee})=\delta_0(\sX)=\sum_{j=0}^n(-1)^j(n+1-j)\,c_j(\sX)\cdot h^{n-j}\,.
\end{equation}
We can also invert the relation between polar classes and Chern classes and get the new relation
\begin{equation}\label{eq: Holme inverted}
c_i(\sX)=\sum_{j=0}^i(-1)^{j}\binom{n+1-j}{i-j}\delta_{n-j}(\sX)\cdot h^{i-j}\,.
\end{equation}
If $\sX$ is a singular variety, then by replacing the Chern classes $c_i(\sX)$ by \emph{the Chern-Mather classes} $c_i^M(\sX)$, one obtains the same relations \eqref{eq: Holme} and \eqref{eq: Holme inverted} to polar classes of $\sX$, see \cite{piene1978polar}, \cite[Theorem 3]{piene1988cycles}, and \cite[Proposition 3.13]{aluffi2018projective} for more details.

\subsection{Generic ED degree and ED defect}\label{sec: generic ED degree and ED defect}

Given a variety $\sX\subset\PP^N$, the value of $\EDeg_Q(\sX)$ depends on the way how $\sX$ intersects the quadric $\sQ\subset \PP^N$ associated with the bilinear form $Q\in S^2_+(V)$. If $\sQ$ is in general position with respect to $\sX$, the ED degree of $\sX$ reaches its maximum possible value. This is the key point of the next result.

\begin{theorem}\cite[Theorem 5.4]{DHOST}\label{thm: ED degree sum polar classes}
Let $\sX\subset\PP^N$ be a projective variety and consider its conormal variety $\sN_\sX\subset\PP^N\times\PP^N$. If $\sN_\sX$ is disjoint from the diagonal $\Delta(\PP^N)$ of $\PP^N\times\PP^N$, then
\begin{equation}\label{eq: sum polar classes}
\EDeg_Q(\sX) = \delta_0(\sX)+\cdots+\delta_{N-1}(\sX) = \EDeg_Q(\sX^{\vee})\,.
\end{equation}
\end{theorem}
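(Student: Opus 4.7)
The plan is to reinterpret $\EDeg_Q(\sX)$ as an intersection number on $\PP^N\times\PP^N$ involving the conormal variety $\sN_\sX$, and then to read off the second equality $\EDeg_Q(\sX)=\EDeg_Q(\sX^\vee)$ directly from the biduality theorem.

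First I would set up the dictionary between ED critical points and $\sN_\sX$ in the identification \eqref{eq: identity conormal}. Given a critical point $x\in (C\sX)_{\mathrm{sm}}$ relative to $u$, setting $y\coloneqq u-x$ produces a pair $([x],[y])\in \sN_\sX$ that also satisfies the incidence condition $[u]\in\langle [x],[y]\rangle$. Conversely, whenever $([x],[y])\in\sN_\sX$ lies on a projective line through $[u]$ with $[x]\neq[y]$, the affine representatives are uniquely recovered from the linear system $x+y=u$. Introducing the incidence cycle
\[
J_u\coloneqq \overline{\bigl\{([x],[y])\in \PP^N\times\PP^N\mid [x]\neq [y],\ [u]\in\langle [x],[y]\rangle\bigr\}},
\]
the ED critical points relative to a general $u$ are in bijection with the points of $\sN_\sX\cap J_u$ off the diagonal $\Delta(\PP^N)$.

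Next, I would compute the intersection in the Chow ring of Example \ref{ex: Chow ring Segre product}. A direct calculation --- e.g.\ intersecting with general products $\PP^{N-i}\times\PP^{i-1}$, each of which meets $J_u$ in exactly one point, because the condition $[u]\in\langle [x],[y]\rangle$ cuts out a unique intersection of $\PP^{N-i}$ with the line $\langle [u],p\rangle$ --- yields
\[
[J_u]=\sum_{j=0}^{N-1} s^j t^{N-1-j}\in A^{N-1}(\PP^N\times\PP^N).
\]
Pairing with the multidegree expansion $[\sN_\sX]=\sum_{i=0}^{N-1}\delta_i(\sX)\,s^{N-i}t^{i+1}$ and keeping only the top class $s^Nt^N$, the exponent matching forces $j=i$ in each product, so
\[
[\sN_\sX]\cdot [J_u]=\sum_{i=0}^{N-1}\delta_i(\sX).
\]
The hypothesis $\sN_\sX\cap\Delta(\PP^N)=\emptyset$ enters here: since $J_u$ always contains $\Delta(\PP^N)$ (every $([z],[z])$ is the limit of pairs on a line through $[u]$), avoidance of the diagonal is exactly what allows the set-theoretic intersection $\sN_\sX\cap J_u$ to be identified, for generic $u$, with the ED critical points --- nothing is counted that should not be, and nothing is lost.

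Finally, for the second equality, I would invoke the biduality theorem $\sN_\sX=\sN_{\sX^\vee}$, which after the identification \eqref{eq: identity conormal} acts on $\PP^N\times\PP^N$ by swapping factors. The multidegrees transform as $\delta_k(\sX^\vee)=\delta_{N-1-k}(\sX)$, while both the cycle $J_u$ and the diagonal hypothesis are symmetric under the swap, so the same calculation applied to $\sX^\vee$ gives $\EDeg_Q(\sX^\vee)=\sum_k\delta_k(\sX^\vee)=\sum_i\delta_i(\sX)=\EDeg_Q(\sX)$. The main obstacle is rigorizing Step 1--2: one must show that for a generic $u$ the scheme $\sN_\sX\cap J_u$ is reduced and $0$-dimensional, and that each of its points truly corresponds to a unique ED critical point. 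This calls for a Bertini-type transversality argument in which the diagonal-avoidance hypothesis is used precisely to exclude degenerate limits landing on the isotropic quadric $\sQ$, where the identification between critical points and $(x,y)\in\sN_\sX$ breaks down.
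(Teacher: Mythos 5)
Your plan reproduces the argument of Draisma--Horobe\c{t}--Ottaviani--Sturmfels--Thomas (the source the paper cites for this theorem) essentially verbatim: you translate ED critical points into intersection points of $\sN_\sX$ with the join cycle $J_u$ away from the diagonal, compute $[J_u]=\sum_j s^jt^{N-1-j}$ by intersecting with general $\PP^{N-i}\times\PP^{i-1}$, read off $\sum_i\delta_i(\sX)$ from the product $[\sN_\sX]\cdot[J_u]$, and use the biduality $\sN_\sX=\sN_{\sX^\vee}$ (which swaps factors and hence reverses the index of $\delta_i$) for the second equality. Your observation that $\Delta(\PP^N)\subset J_u$ for every $u$ is exactly why the diagonal-disjointness hypothesis is what is needed: it guarantees, for generic $u$, that the intersection $\sN_\sX\cap J_u$ is proper, reduced, and supported entirely on the locus where the bijection with ED critical points is defined. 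The one step you flag as needing care --- the Bertini/Sard argument for reducedness and zero-dimensionality of $\sN_\sX\cap J_u$ --- is indeed where the work of the source proof lies, but your identification of that gap and of the role of the diagonal hypothesis is accurate.
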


To satisfy the hypotheses of Theorem \ref{thm: ED degree sum polar classes}, it is sufficient that the varieties $\sX$ and $\sQ$ are transverse, namely that $\sX\cap\sQ$ is smooth and disjoint from $\sX_{\mathrm{sing}}$. In particular, this condition holds when $\sQ$ is generic in the projective space $\PP(S^2V)$ of quadric hypersurfaces.
Hence, the value in \eqref{eq: sum polar classes} is called {\em generic ED degree} of $\sX$ and is denoted by $\gEDeg(\sX)$.

Using the identities in \eqref{eq: Holme} and Theorem \ref{thm: ED degree sum polar classes}, one gets the generic ED degree of a smooth projective variety as a function of its Chern classes.

\begin{theorem}\cite[Theorem 5.8]{DHOST}\label{thm: ED degree Chern classes}
Consider the assumptions of Theorem \ref{thm: ED degree sum polar classes}. Furthermore, we assume that $\sX\subset\PP^N$ is smooth of dimension $n$. Then
\begin{equation}\label{eq: ED degree Chern classes}
\EDeg_Q(\sX) = \gEDeg(\sX) = \sum_{i=0}^n(-1)^i(2^{n+1-i}-1)\deg(c_i(\sX))\,.
\end{equation}
\end{theorem}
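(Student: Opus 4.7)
The plan is to combine the two main inputs already available: on the one hand, Theorem \ref{thm: ED degree sum polar classes} expresses the generic ED degree as a sum of the multidegrees $\delta_i(\sX)$ of the conormal variety; on the other hand, formula \eqref{eq: Holme} writes each $\delta_i(\sX)$ as an explicit $\Z$-linear combination of the Chern numbers $\deg(c_j(\sX)) = c_j(\sX)\cdot h^{n-j}$. The theorem should therefore follow by a direct manipulation of a double sum and an elementary binomial identity.

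Concretely, I would proceed as follows. First, by Theorem \ref{thm: ED degree sum polar classes} we have
\begin{equation*}
 \gEDeg(\sX) \;=\; \sum_{i=0}^{N-1}\delta_i(\sX).
\end{equation*}
Next, invoking \eqref{eq: identity multidegrees conormal polar degrees}, which says $\delta_i(\sX)=0$ for $i>n=\dim(\sX)$, the sum truncates to $i\in\{0,\dots,n\}$. Substituting the formula \eqref{eq: Holme} for each $\delta_i(\sX)$ then gives
\begin{equation*}
 \gEDeg(\sX) \;=\; \sum_{i=0}^{n}\sum_{j=0}^{n-i}(-1)^{j}\binom{n+1-j}{i+1}\,c_j(\sX)\cdot h^{n-j}.
\end{equation*}

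The next step is to interchange the order of summation, grouping by the Chern class index $j$. For each fixed $j\in\{0,\dots,n\}$, the index $i$ then ranges over $\{0,\dots,n-j\}$, which yields
\begin{equation*}
 \gEDeg(\sX) \;=\; \sum_{j=0}^{n}(-1)^{j}\,\deg(c_j(\sX))\,\sum_{i=0}^{n-j}\binom{n+1-j}{i+1}.
\end{equation*}
Reindexing the inner sum by $k=i+1$ turns it into $\sum_{k=1}^{n+1-j}\binom{n+1-j}{k}$, which by the standard identity $\sum_{k=0}^{m}\binom{m}{k}=2^{m}$ equals $2^{n+1-j}-1$. Substituting this value yields the desired expression.

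The only possible obstacle is ensuring that the hypotheses of Theorem \ref{thm: ED degree sum polar classes} are in force, and that the identities \eqref{eq: identity multidegrees conormal polar degrees} and \eqref{eq: Holme} apply verbatim; both are guaranteed by the assumptions that $\sN_\sX\cap\Delta(\PP^N)=\emptyset$ and that $\sX$ is smooth of dimension $n$, respectively. Beyond this bookkeeping, the argument is a routine manipulation of a double sum, with no essential difficulty.
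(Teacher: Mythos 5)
Your proof is correct and takes exactly the route the paper indicates: the paper explicitly says in the paragraph preceding the statement that the result follows by combining the polar-class formula \eqref{eq: Holme} with Theorem \ref{thm: ED degree sum polar classes}, which is precisely what you do, and your interchange of summation and binomial evaluation $\sum_{k=1}^{n+1-j}\binom{n+1-j}{k}=2^{n+1-j}-1$ is correct.
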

The previous result has been extended to singular varieties by Aluffi in \cite[Proposition 2.9]{aluffi2018projective}.

For a specific choice of a bilinear form $Q$, the value in \eqref{eq: sum polar classes} is only an upper bound for the ED degree of a variety $\sX\subset\PP^N$ with respect to $Q$. This motivates the definition of {\em defect of ED degree} of $\sX$ with respect to $Q$ as the difference
\begin{equation}\label{eq: defect of ED degree}
\EDef_Q(\sX)\coloneqq \gEDeg(\sX)-\EDeg_Q(\sX)\,.
\end{equation}
The study of this invariant is the main content of \cite{maxim2020defect}. Let us recall their main results for the reader's convenience. Below, $\mathscr{X}_0$ denotes a Whitney stratification of the singular locus of $\sX\cap\sQ$. The order between strata $V<S$ means $V\subset\overline{S}$, and $L_{V,S}$ is the complex link of the pair of strata $(V,S)$, see \cite[Chapter 2]{GorMac-Morse}. Finally, 
\[
\mu_V \coloneqq \chi(\widetilde{H}^*(F_V;\Q))
\]
is the Euler characteristic (in reduced cohomology) of the Milnor fiber $F_V$ associated with $V$ around any point $x\in V$. If $V = \{x\}$ is an isolated singularity, let $\{ f=0\}$ be a local equation for $\sX\cap\sQ$ in a neighborhood of $x$; let $m =\dim_{\C} (\sX\cap\sQ)$. Milnor proved \cite[Chapter 7]{milnor-book-sings} that $\dim_{\C}H^j(F_V, \Q) = 0$ for $j\neq 0, m$, while $\dim_{\C}H^0(F_V, \Q) = 1$ and $\dim_{\C}H^{m}(F_V, \Q) = \mu$, where 
\[
\mu = \dim_{\C}\frac{\C[[ x_1, \dots, x_{m+1} ]]}{ \left( \frac{\partial f}{\partial x_1}, \dots , \frac{\partial f}{\partial x_{m+1}}\right)}
\]
is what we call nowadays the \emph{Milnor number}. The relation with $\mu_V$ is then clear: $\mu_V = (-1)^m\mu$. The following results will be useful in the sequel.

\begin{theorem}\cite[Theorem~1.5]{maxim2020defect}\label{thm: ED defect general}
Let $\sX\subset\PP^N$ be a smooth irreducible projective variety not contained in the isotropic quadric $\sQ$. Then
\begin{equation}\label{eq: ED defect main formula}
\EDef_Q(\sX) = \sum_{V\in \mathscr{X}_0} (-1)^{\codim_{\sX\cap\sQ}(V)}\alpha_V \cdot \gEDeg(\bar{V})
\end{equation}
with $\alpha_V = \mu_V -\sum_{V<S}\chi_c(L_{V,S})\cdot \mu_S$, where $\chi_c$ denotes the Euler characteristic for the homology with compact support. 
\end{theorem}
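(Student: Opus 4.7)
The plan is to localize the ED defect at the singular strata of $\sX \cap \sQ$ via a stratified Morse-theoretic analysis, and to recognize each stratum's contribution as a product of a normal Morse datum with a generic ED degree. Using $Q$ to identify $V$ with $V^*$, both $\gEDeg(\sX)$ and $\EDeg_Q(\sX)$ become intersection numbers on the conormal variety $\sN_\sX \subset \PP^N \times \PP^N$ of \eqref{eq: identity conormal}: the sum $\sum_i \delta_i(\sX)$ equals the intersection of $[\sN_\sX]$ with a generic translate of the diagonal class, while $\EDeg_Q(\sX)$ is the intersection of $[\sN_\sX]$ with the actual diagonal $\Delta(\PP^N)$. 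Their difference is therefore an excess intersection supported on $\sN_\sX \cap \Delta(\PP^N)$, which, as observed in Theorem \ref{thm: ED degree sum polar classes}, is precisely the locus where the tangent space to $\sX$ meets $\sQ$ non-transversely, i.e. the singular locus of $\sX \cap \sQ$.

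Next I would realize this excess intersection by deforming $\sQ$ inside a generic pencil of quadrics and tracking the critical points of the corresponding distance function. Critical points that disappear in the limit collapse into the singular locus of $\sX \cap \sQ$, and the number falling into a neighborhood of each Whitney stratum $V \in \mathscr{X}_0$ is controlled by the normal-slice topology encoded in the Milnor fiber $F_V$ together with the complex links $L_{V,S}$ to higher-dimensional strata $S > V$. The combination $\alpha_V = \mu_V - \sum_{V<S} \chi_c(L_{V,S}) \mu_S$ is exactly the primary normal Morse datum at $V$, obtained by subtracting from the gross Milnor contribution of $V$ those contributions already inherited from strata that contain $V$ in their closure. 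This inclusion–exclusion is the standard recursion in MacPherson's local Euler obstruction formalism (equivalently, in the constructible-function calculus), which computes the characteristic cycle of the intersection $\sX \cap \sQ$ stratum by stratum.

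Each stratum closure $\bar{V}$ then enters with a global multiplicity $\gEDeg(\bar V)$: after a generic transverse slicing and degeneration argument, the number of disappearing critical points whose limit lies over $\bar V$ factors as (local normal datum) $\times$ (global critical count on $\bar V$), and the latter is the generic ED degree of $\bar V$ by the intrinsic nature of that invariant. The sign $(-1)^{\codim_{\sX \cap \sQ}(V)}$ arises from the alternating degree shift in stratified Morse theory, reflecting the parity of the vanishing cycles attached to $V$. Summing these local-global contributions over $\mathscr{X}_0$ yields formula \eqref{eq: ED defect main formula}.

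The main obstacle is verifying that the recursion defining $\alpha_V$ is truly the correct local invariant at each stratum. This demands Whitney regularity to guarantee that $F_V$ and $L_{V,S}$ are independent of the chosen base point in $V$, together with an induction on $\dim V$ so that contributions already accounted for at higher-dimensional strata are not double-counted; handling non-isolated singularities of $\sX \cap \sQ$ requires the full complex-link formalism rather than classical Milnor number bookkeeping alone. A secondary technical point is ruling out spurious contributions from the smooth locus of $\sX \cap \sQ$, which one obtains from the fact that the excess intersection is entirely supported where $\sN_\sX$ meets $\Delta(\PP^N)$, combined with the vanishing of $\mu_V$ along the smooth part of the intersection.
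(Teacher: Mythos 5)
The paper cites this theorem from \cite{maxim2020defect} without proof, so I compare your sketch against the argument in that reference rather than against anything proved here. Your proposal invokes several of the right objects (conormal varieties, Whitney stratification, complex links, the constructible-function and characteristic-cycle formalism), but the load-bearing step is passed off as routine when in fact it is where all the work lies. Specifically, the claim that ``after a generic transverse slicing and degeneration argument, the number of disappearing critical points whose limit lies over $\bar V$ factors as (local normal datum) $\times$ (global critical count on $\bar V$)'' is exactly what needs to be proved, and a direct Morse-theoretic tracking of critical points while $\sQ$ degenerates is fraught: critical points can escape to infinity, collide along the ED discriminant, or fail to localize onto a single stratum, and nothing in your sketch controls these phenomena. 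The assertion that $\EDeg_Q(\sX)$ ``is the intersection of $[\sN_\sX]$ with the actual diagonal $\Delta(\PP^N)$'' is also not correct as stated: when $\sN_\sX$ is disjoint from $\Delta(\PP^N)$ the ED degree equals $\sum_i\delta_i(\sX)$, which is not an intersection with $\Delta$, and when they do meet, $\EDeg_Q(\sX)$ is the cardinality of a generic fiber of $\pr_2$, not a naive cycle-theoretic intersection number. So the ``excess intersection'' framing of the defect is at best a heuristic that the remainder of your argument does not cash out.

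The actual proof in \cite{maxim2020defect} avoids critical-point tracking entirely. It rests on the Euler-characteristic description of the ED degree proved in the authors' earlier work, namely that for smooth projective $\sX$ not contained in $\sQ$ one has $\EDeg_Q(\sX)=(-1)^{\dim\sX}\chi\bigl(\sX\setminus(\sQ\cup H_u)\bigr)$ for a generic hyperplane $H_u$. The defect then becomes a difference of Euler characteristics of $\sX\cap\sQ_{\mathrm{gen}}$ and $\sX\cap\sQ$, which is computed by the vanishing-cycle and characteristic-cycle calculus of constructible functions: the coefficients $\alpha_V$ arise as multiplicities of the characteristic cycle of a suitable constructible complex supported on $\mathrm{Sing}(\sX\cap\sQ)$, and the factor $\gEDeg(\bar V)$ enters via the pushforward formula for characteristic cycles (equivalently, the projection formula for Chern--Mather classes), not through a direct count of disappearing critical points. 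Whitney regularity and the recursion defining $\alpha_V$ do enter, as you anticipate, but through the sheaf-theoretic stratification rather than through a Morse-theoretic localization. The gap in your proposal is therefore not in the choice of ingredients but in the absence of the microlocal machinery that actually makes the local-to-global factorization rigorous.
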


The previous formula \eqref{eq: ED defect main formula} is generally difficult to use. However, in our computations, we mostly apply the next two nice simplifications.

\begin{corollary}\cite[Corollary~1.8]{maxim2020defect}\label{cor: ED defect isolated singularities}
Assume that $\sZ=\mathrm{Sing}(\sX\cap\sQ)$ consists of isolated points. Then
\begin{equation}
 \EDef_Q(\sX) = \sum_{x\in\sZ}\mu_x\,,
\end{equation}
where $\mu_x$ is the Milnor number of the isolated singularity $x\in\sZ$.
\end{corollary}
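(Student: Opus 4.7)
The plan is to deduce this directly from Theorem~\ref{thm: ED defect general} by exploiting the simplifications that occur when $\sZ = \mathrm{Sing}(\sX \cap \sQ)$ is zero-dimensional. Since $\sZ$ consists of isolated points, a Whitney stratification $\mathscr{X}_0$ of $\sZ$ can be taken to have the isolated singular points $V = \{x\}$, $x \in \sZ$, as its only strata. Setting $m := \dim_{\C}(\sX \cap \sQ) = \dim(\sX) - 1$ (using that $\sX \not\subset \sQ$), each such stratum has $\codim_{\sX \cap \sQ}(V) = m$, and there are no pairs of strata $V < S$ within $\mathscr{X}_0$. Hence the correction term in the definition of $\alpha_V$ is empty, and
\[
\alpha_V \;=\; \mu_V.
\]

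The next step is to rewrite $\mu_V$ in terms of the classical Milnor number $\mu_x$. Since $V = \{x\}$ is an isolated singularity of $\sX \cap \sQ$, Milnor's theorem recalled in the excerpt gives $H^0(F_V, \Q) \cong \Q$ and $H^m(F_V, \Q) \cong \Q^{\mu_x}$, with all other cohomology vanishing. Passing to reduced cohomology kills the degree-zero contribution, leaving
\[
\mu_V \;=\; \chi\bigl(\widetilde{H}^*(F_V; \Q)\bigr) \;=\; (-1)^m \mu_x.
\]
Combining this with the sign $(-1)^{\codim_{\sX\cap\sQ}(V)} = (-1)^m$ from Theorem~\ref{thm: ED defect general} produces
\[
(-1)^{\codim_{\sX \cap \sQ}(V)} \alpha_V \;=\; (-1)^m \cdot (-1)^m \mu_x \;=\; \mu_x,
\]
so the signs cancel exactly.

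It remains to compute $\gEDeg(\bar{V})$ for $V = \{x\}$. A zero-dimensional variety consisting of a single reduced point has tangent and normal spaces trivially in general position with respect to any quadric, and for a generic data point the squared-distance function attains a unique critical value at $x$; equivalently, the conormal variety of $\{x\}$ has polar class $\delta_0 = 1$ and all other polar classes vanishing, so by Theorem~\ref{thm: ED degree sum polar classes} we get $\gEDeg(\bar{V}) = 1$. Substituting into \eqref{eq: ED defect main formula} yields
\[
\EDef_Q(\sX) \;=\; \sum_{V \in \mathscr{X}_0}(-1)^{\codim_{\sX\cap\sQ}(V)}\,\alpha_V \cdot \gEDeg(\bar{V}) \;=\; \sum_{x \in \sZ} \mu_x,
\]
as claimed. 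There is no real obstacle in this argument; the only point requiring care is the double sign cancellation between $(-1)^{\codim}$ and the parity arising from the reduced Euler characteristic of the Milnor fiber concentrated in middle degree $m$.
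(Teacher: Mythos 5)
The paper does not prove this corollary; it quotes it directly from \cite[Corollary~1.8]{maxim2020defect}. Your derivation from Theorem~\ref{thm: ED defect general} is correct and reproduces the intended specialization argument. The two sign contributions do cancel exactly as you say: $\codim_{\sX\cap\sQ}(\{x\}) = m$ while Milnor's theorem concentrates the reduced cohomology of the Milnor fiber in degree $m$, giving $\mu_V = (-1)^m \mu_x$, so $(-1)^m \cdot (-1)^m \mu_x = \mu_x$. And $\gEDeg(\{x\}) = 1$ because the conormal variety of a point has $\delta_0 = 1$ and all other multidegrees zero. Note also that the step $\dim(\sX\cap\sQ) = \dim(\sX) - 1$ uses that $\sX$ is irreducible and not contained in $\sQ$, which are hypotheses of Theorem~\ref{thm: ED defect general}; this guarantees the intersection is pure of dimension $m$, so every isolated singular point contributes with the same codimension.

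One small point you should tighten rather than gloss over: you assert that ``there are no pairs of strata $V<S$ within $\mathscr{X}_0$.'' As stated, $\mathscr{X}_0$ is a Whitney stratification of $\mathrm{Sing}(\sX\cap\sQ)$ only, but the relation $V<S$ in \cite{maxim2020defect} can in principle involve the open smooth stratum $S_0$ of $\sX\cap\sQ$, for which $V=\{x\}<S_0$ always holds. Your conclusion $\alpha_V = \mu_V$ is still correct, but for a cleaner reason: the Milnor fiber at a smooth point is contractible, so $\mu_{S_0}=0$ and the extra term in $\alpha_V$ vanishes regardless of whether $S_0$ is included in the sum. Stating that vanishing explicitly makes the argument robust to either reading of the theorem.
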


\begin{corollary}\cite[Corollary~1.9]{maxim2020defect}\label{cor: ED defect equisingular}
Assume that $\sZ=\mathrm{Sing}(\sX\cap\sQ)$ is connected and that it is a closed (smooth
and connected) stratum in a Whitney stratification of $\sX\cap\sQ$ (namely, $\sX\cap\sQ$ is equisingular along $Z$). Then
\begin{equation}
 \EDef_Q(\sX) = \mu\cdot\gEDeg(\sZ)\,,
\end{equation}
where $\mu$ is the Milnor number of the isolated transversal singularity at some point of $Z$.
\end{corollary}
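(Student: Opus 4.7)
The plan is to derive this corollary by specializing Theorem~\ref{thm: ED defect general} to the case where the Whitney stratification $\mathscr{X}_0$ of $\mathrm{Sing}(\sX\cap\sQ)$ consists of a single stratum. Since by hypothesis $\mathrm{Sing}(\sX\cap\sQ)=\sZ$ is a smooth, connected, closed stratum of a Whitney stratification of $\sX\cap\sQ$, I take $\mathscr{X}_0=\{\sZ\}$. The sum in \eqref{eq: ED defect main formula} then collapses to the single contribution
\begin{equation*}
\EDef_Q(\sX)=(-1)^{\codim_{\sX\cap\sQ}(\sZ)}\,\alpha_\sZ\cdot\gEDeg(\sZ),
\end{equation*}
where I have used that $\bar{\sZ}=\sZ$ because $\sZ$ is already closed.

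Next I evaluate $\alpha_\sZ=\mu_\sZ-\sum_{\sZ<S}\chi_c(L_{\sZ,S})\,\mu_S$. Any stratum $S$ of $\sX\cap\sQ$ with $\sZ<S$ necessarily lies in the smooth locus of $\sX\cap\sQ$, since by assumption the entire singular locus is exhausted by $\sZ$. For such a smooth stratum the Milnor fiber is contractible and so $\mu_S=\chi(\widetilde H^*(F_S;\Q))=0$. Hence the corrective sum vanishes and $\alpha_\sZ=\mu_\sZ$.

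It remains to relate $\mu_\sZ$ to the classical Milnor number $\mu$ appearing in the statement. Equisingularity of $\sX\cap\sQ$ along $\sZ$ means that at any point $x\in\sZ$, cutting with a transversal slice to $\sZ$ inside $\sX$ produces an isolated hypersurface singularity (cut out by the restriction of $q$), whose Milnor fiber is homotopy equivalent to $F_\sZ$. This transversal slice is smooth of complex dimension $\codim_\sX(\sZ)=\codim_{\sX\cap\sQ}(\sZ)+1$, so the transversal hypersurface has complex dimension $\codim_{\sX\cap\sQ}(\sZ)$. Milnor's theorem \cite[Chapter 7]{milnor-book-sings} therefore forces $\widetilde H^*(F_\sZ;\Q)$ to be concentrated in top degree $\codim_{\sX\cap\sQ}(\sZ)$ with rank $\mu$, yielding
\begin{equation*}
\mu_\sZ=(-1)^{\codim_{\sX\cap\sQ}(\sZ)}\,\mu.
\end{equation*}

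Substituting this back into the one-term version of Theorem~\ref{thm: ED defect general}, the two factors of $(-1)^{\codim_{\sX\cap\sQ}(\sZ)}$ cancel and I obtain $\EDef_Q(\sX)=\mu\cdot\gEDeg(\sZ)$. The only genuinely delicate point is the parity bookkeeping in the last step: one must correctly identify the complex dimension of the transversal slice so that the Milnor sign matches the codimension sign appearing in \eqref{eq: ED defect main formula}. Once this matching is in place, the corollary follows as a clean one-stratum specialization of the general formula.
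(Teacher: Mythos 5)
The paper states this corollary as a citation from \cite{maxim2020defect} without providing a proof, so there is no in-paper argument to compare against; your proposal supplies the missing derivation from Theorem~\ref{thm: ED defect general}. The argument is correct and is essentially the one in the cited source: with $\mathscr{X}_0=\{\sZ\}$ the outer sum in \eqref{eq: ED defect main formula} has one term with $\bar{\sZ}=\sZ$; the corrective sum $\sum_{\sZ<S}\chi_c(L_{\sZ,S})\mu_S$ vanishes because every stratum $S$ strictly above $\sZ$ is contained in the smooth locus of $\sX\cap\sQ$ and so has $\mu_S=0$; and the sign bookkeeping $\mu_\sZ=(-1)^{\codim_{\sX\cap\sQ}(\sZ)}\mu$ via Milnor's bouquet theorem on the normal slice cancels the prefactor $(-1)^{\codim_{\sX\cap\sQ}(\sZ)}$. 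One point worth making explicit, which you rightly use implicitly: the Milnor fiber $F_V$ attached to a positive-dimensional stratum $V$ must be read as the \emph{transversal} Milnor fiber, i.e., the Milnor fiber of the germ cut out in a normal slice to $V$ inside $\sX$; this is the reading that makes your dimension count (transversal hypersurface of complex dimension $\codim_{\sX\cap\sQ}(\sZ)$) and hence the sign match up, and it is consistent with the paper's own treatment of the isolated-singularity case, where the normal slice is everything and $\mu_V=(-1)^m\mu$.
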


\subsection{Morse theory}\label{sec: Morse theory}

In this subsection, we briefly recall basic facts in Morse Theory. Given a differentiable manifold $\sM$ and a differentiable function $f\colon\sM\to\R$, a point $p\in\sM$ is called a {\em critical point of $f$} if the differential
\[
\dd_p f\colon T_p\sM \to T_{f(p)} \R \cong \R
\]
vanishes. For $v, w\in T_p\sM$, we can extend $v, w$ to vector fields $\tilde{v}, \tilde{w}$ on an open neighborhood of $p$. The Hessian $H_p(f)$ of $f$ at a critical point $p$ is the symmetric bilinear map defined by
\[
H_p(f)\colon T_p\sM \times T_p\sM \to\R, \quad H_p(f)(v,w) = \tilde{v} \bigl(\tilde{w}(f)\bigr).
\]
\begin{definition}\label{def: index, Morse function}
Let $f\colon\sM\to\R$ be a differentiable function over a differentiable manifold $\sM$, and $p$ be a critical point of $f$. The {\em index} of $p$, denoted by $\lambda_p$, is the dimension of the subspace of $T_p\sM$ on which the Hessian $H_p(f)$ is negative definite. The critical point $p$ is said to be {\em nondegenerate} if and only if $H_p(f)$ is nondegenerate. Moreover, we say that $f$ is a {\em Morse function} if the critical points of $f$ are all nondegenerate.
\end{definition}
Note that nondegenerate critical points are isolated~\cite[Corollary~2.3]{Milnor1963}. Hence, a Morse function $f\colon\sM\to\R$ on a compact manifold $\sM$ has finitely many critical points. Furthermore, if we let $m_k$ be the number of critical points of $f$ of index $k$ for all $k\in\{0,\ldots,\dim(\sM)\}$, then
\begin{equation}\label{eq: inequalities Betti}
m_k \ge b_k(\sM)
\end{equation}
for all $k$, where $b_k(\sM)$ is the $k$-th Betti number of $\sM$, i.e., the rank of the $k$-th homology group $H_k(\sM,\Q)$. These inequalities are known as weak {\em Morse inequalities}. In this paper, we also need the following strong Morse inequalities, see for example \cite[\S $5$]{Milnor1963} or \cite[Theorem~3.33]{BH04}:
\begin{theorem}\label{thm: strong Morse}
For a Morse function $f\colon\sM\to\R$ on a compact manifold $\sM$, we have
\begin{enumerate}
\item 
$\sum_{k=0}^i (-1)^{i-k} m_k \ge \sum_{k=0}^i (-1)^{i-k} b_k(\sM)$ for every $i\in\{0,\ldots,\dim(\sM)\}$.
\item
$\sum_{k=0}^{\dim(\sM)} (-1)^k m_k = \sum_{k=0}^{\dim(\sM)} (-1)^k b_k(\sM)$.
\end{enumerate}
\end{theorem}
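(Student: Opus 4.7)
The plan is to use the classical Morse-theoretic handle decomposition to build a CW model of $\sM$ whose cell counts are exactly the $m_k$, and then reduce the stated inequalities to a purely algebraic statement about finite chain complexes.

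First I would invoke the two standard sublevel-set lemmas. Fix an auxiliary Riemannian metric on $\sM$ and set $\sM^a \coloneqq f^{-1}((-\infty,a])$. If $[a,b]$ contains no critical values of $f$, then integrating the negative gradient flow of $f$ for a suitable time gives a deformation retraction of $\sM^b$ onto $\sM^a$. If instead $[a,b]$ contains a single critical value $c$ corresponding to one nondegenerate critical point $p$ of index $\lambda_p$, then the Morse lemma produces local coordinates in which $f - c$ is a standard quadratic form of signature $(\dim(\sM) - \lambda_p, \lambda_p)$, and a standard argument (attaching the descending disk of $p$) shows that $\sM^b$ is homotopy equivalent to $\sM^a$ with a cell of dimension $\lambda_p$ glued along its boundary. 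Compactness is used here to ensure the gradient flow is complete and critical values are finite in number.

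Ordering the critical values $c_1 < c_2 < \cdots$ and attaching cells successively yields a CW complex $K$ homotopy equivalent to $\sM$ with exactly $m_k$ cells in each dimension $k$. Its rational cellular chain complex $C_\bullet = C_\bullet(K;\Q)$ then satisfies $\dim_\Q C_k = m_k$ and has homology of ranks $b_k(\sM)$. Setting $M(t) \coloneqq \sum_k m_k t^k$ and $P(t) \coloneqq \sum_k b_k(\sM) t^k$, the goal is the polynomial identity
\[
M(t) - P(t) = (1+t)\, R(t), \qquad R(t) = \sum_k r_k t^k \in \Z_{\ge 0}[t].
\]
This is the algebraic core. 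To prove it, decompose $C_k = Z_k \oplus W_k$ with $Z_k = \ker(\partial_k)$ and $W_k$ any complement, so $\partial$ restricts to an injection $W_k \hookrightarrow B_{k-1} \subset Z_{k-1}$. Set $r_k \coloneqq \dim_\Q W_{k+1}$; since $B_k$ has dimension $r_k$, the short exact sequence $0 \to B_k \to Z_k \to H_k \to 0$ gives $\dim Z_k = b_k(\sM) + r_k$, and therefore $m_k = \dim Z_k + \dim W_k = b_k(\sM) + r_k + r_{k-1}$, which is exactly the coefficient identity in $M(t) - P(t) = (1+t)R(t)$.

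Once this identity is in hand, both assertions follow immediately. Comparing coefficients and summing telescopically yields
\[
\sum_{k=0}^{i}(-1)^{i-k}\bigl(m_k - b_k(\sM)\bigr) = r_i \ge 0,
\]
which is part (1). Evaluating $M(t) - P(t) = (1+t)R(t)$ at $t=-1$ gives $\sum_k (-1)^k m_k = \sum_k (-1)^k b_k(\sM)$, which is part (2). The genuine work is concentrated in the first step: establishing the CW model with prescribed cell counts, since this requires the Morse lemma for nondegenerate critical points plus the compactness-based gradient-flow argument; the conversion from the CW model to the inequalities is routine linear algebra by comparison.
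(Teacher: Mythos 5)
Your proof is correct. The paper states this theorem as known and cites Milnor~\cite[\S 5]{Milnor1963} and Banyaga--Hurtubise without reproducing an argument, so there is no in-paper proof to compare against; the natural comparison is with Milnor's own proof. Your route --- construct a CW model $K\simeq\sM$ with $m_k$ cells in dimension $k$ via the two sublevel-set lemmas, then split the cellular chain complex $C_k=Z_k\oplus W_k$ to obtain $M(t)-P(t)=(1+t)R(t)$ with $R(t)=\sum_k r_k t^k\in\Z_{\ge 0}[t]$, and read off both parts (the telescoping sum $\sum_{k=0}^i(-1)^{i-k}(m_k-b_k)=r_i\ge 0$, and evaluation at $t=-1$) --- is the standard chain-complex proof, and the algebra checks out. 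Milnor's Section~5 proof is organized differently: he introduces the rank functions $S_\lambda(X,Y)=\sum_{i\le\lambda}(-1)^{\lambda-i}\operatorname{rank}H_i(X,Y)$, proves they are subadditive over a filtration using the long exact sequence of a triple, and applies this to the filtration of $\sM$ by sublevel sets, working entirely with the relative groups $H_*(\sM^{c+\epsilon},\sM^{c-\epsilon})$ and never assembling a global CW structure. Each has a virtue: your route reduces everything to transparent linear algebra over a field and delivers part~(2) for free, whereas Milnor's route sidesteps the need to promote a sequence of cell attachments in arbitrary dimensional order to a genuine CW complex. That promotion is the one step you elide --- the sentence ``attaching cells successively yields a CW complex $K$ \ldots\ with exactly $m_k$ cells in each dimension $k$'' presupposes that the attaching maps can be homotoped into the appropriate skeleta, which is the content of Milnor's Theorem~3.5 (or, alternatively, one can bypass CW structures altogether and run your chain-complex argument on the filtration spectral sequence). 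This is a routine but genuine step and deserves a citation; with it in place the proof is complete.
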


In the following lemma, we study a necessary and sufficient condition under which the distance function from a point to a real differentiable manifold is Morse.
First, given a differentiable submanifold $\sM\subset V^\mR$, similarly as in \eqref{eq: ED correspondence} we define the {\em (real) ED correspondence} of $\sM$ as
\begin{equation}\label{eq: real ED correspondence manifold M}
\sE(\sM,Q)^\mR\coloneqq\{(x,u)\in V^\mR\times V^\mR\mid\text{$x\in \sM$ and $u-x\in N_x \sM$}\}\,,
\end{equation}
where $N_x\sM=\left\{v^*\in V^\mR \mid Q(v^*,v)=0,\ v\in T_x \sM\right\}$ is the normal space to $\sM$ at $x\in \sM$, cf. \eqref{eq: def normal space}.

\begin{lemma}\label{lem: generic=Morse}
Let $\sM$ be a differentiable submanifold of a real vector space $V^\mR$ and let $Q\colon V^\mR\times V^\mR\to\R$ be an inner product on $V^\mR$ with the associated quadratic form $q$ defined as $q(x)=Q(x,x)$, $x\in V^\mR$. 
For a given $u\in V^\mR$, the distance function
\begin{equation}\label{eq: dist^Q}
\dist^Q_{\sM,u} \colon \sM\to\R\,,\quad x\mapsto q(u-x)
\end{equation}
is Morse if and only if $u$ is a regular value of the projection $\pr_2\colon\sE(\sM,Q)^\mR\to V^\mR$ on the second factor.
In particular, for a dense subset of $u \in V^\mR$, the function \eqref{eq: dist^Q} is Morse.
\end{lemma}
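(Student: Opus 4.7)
The plan is to identify critical points of $\dist^Q_{\sM,u}$ with the fiber $\pr_2^{-1}(u)$, and then translate nondegeneracy of the Hessian at each critical point into bijectivity of $d\pr_2$ at the corresponding point of $\sE(\sM,Q)^\mR$. First, $x_0\in\sM$ is critical for $\dist^Q_{\sM,u}$ if and only if $Q(u-x_0, v)=0$ for all $v\in T_{x_0}\sM$, which is precisely the condition $(x_0,u)\in\sE(\sM,Q)^\mR$. The smooth map $(x,u)\mapsto (x,u-x)$ identifies $\sE(\sM,Q)^\mR$ with the total space of the normal bundle $N\sM$, so $\sE(\sM,Q)^\mR$ is a manifold of dimension $\dim V^\mR$, and $\pr_2$ is carried to the endpoint map $\eta\colon N\sM\to V^\mR$, $(x,\xi)\mapsto x+\xi$. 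Thus $u$ is a regular value of $\pr_2$ if and only if $d\eta$ is bijective at every preimage (with the empty-preimage case trivially yielding a Morse function with no critical points).

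The core step is a local computation. Pick a chart $\phi\colon U\to V^\mR$ of $\sM$ near a critical point $x_0$ and a smooth normal frame $e_1,\ldots,e_k$, where $k=\dim V^\mR-\dim\sM$. Writing $u-x_0=\sum_i t_{i,0}\,e_i(x_0)$, the Jacobian of $\eta$ at $(x_0,t_0)$ in these coordinates has the $k$ normal columns $e_i(x_0)$ together with the $n$ base columns $w_j:=\partial_j\phi+\sum_\ell t_{\ell,0}\,\partial_j e_\ell$. Since $V^\mR=T_{x_0}\sM\oplus N_{x_0}\sM$ is $Q$-orthogonal and the normal columns already span the second summand, $d\eta$ is bijective if and only if the tangential projections of $w_1,\ldots,w_n$ span $T_{x_0}\sM$. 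On the other hand, expanding $H_{ij}=\partial_i\partial_j\bigl[q(u-\phi(x))\bigr]$ at $x_0$ and using the identity $Q(e_\ell,\partial_j\phi)\equiv 0$, differentiated once, to substitute $Q(e_\ell,\partial_i\partial_j\phi)=-Q(\partial_i e_\ell,\partial_j\phi)$, the Hessian collapses after cancellation to $H_{ij}=2\,Q(w_i,\partial_j\phi)$. Because $Q$ is positive definite on $T_{x_0}\sM$, this Gram-type matrix is nonsingular precisely when the tangential parts of $w_1,\ldots,w_n$ are linearly independent, which is the same condition as the bijectivity of $d\eta$. This proves the equivalence.

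Finally, the density of regular values of $\pr_2$, hence of $u\in V^\mR$ for which $\dist^Q_{\sM,u}$ is Morse, follows at once from Sard's theorem applied to the smooth map $\pr_2\colon\sE(\sM,Q)^\mR\to V^\mR$ between manifolds of equal dimension: the set of its critical values has Lebesgue measure zero, so the regular values form a dense (in fact full-measure) subset of $V^\mR$. The main subtlety lies in the Hessian-Jacobian matching of the middle paragraph; once one recognizes that differentiating the orthogonality relations $Q(e_\ell,\partial_j\phi)=0$ reproduces exactly the normal-frame derivatives appearing in the columns $w_j$ of the Jacobian of $\eta$, the desired cancellation is transparent and the equivalence follows without any further geometric input.
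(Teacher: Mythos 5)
Your proof is correct, and it takes a genuinely different route from the paper's. The paper follows the Golubitsky--Guillemin strategy: it introduces the map $F\colon\sM\times V^\mR\to T^*\sM$, $(x,u)\mapsto\bigl(x,-\tfrac12\dd_x\dist^Q_{\sM,u}\bigr)$, shows that $F$ (as a whole) is transverse to the zero section by observing that $\dd_u F_x$ already surjects onto $T^*_x\sM$, and then deduces that $F_u$ is transverse to the zero section precisely when $u$ is a regular value of $\pr_2$; the equivalence with the Morse property is then just the standard characterization ``$f$ is Morse iff $\dd f$ is transverse to the zero section.'' You instead identify $\sE(\sM,Q)^\mR$ with the total space of the normal bundle $N\sM$ via $(x,u)\mapsto(x,u-x)$, carry $\pr_2$ to the endpoint map $\eta(x,\xi)=x+\xi$, and then match, in local coordinates, the Jacobian of $\eta$ with the Hessian of the distance function by differentiating the orthogonality relations $Q(e_\ell,\partial_j\phi)=0$. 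The cancellation you exploit is exactly the classical identification of the focal points of $\sM$ (critical values of the normal endpoint map) with the points $u$ at which the distance function is non-Morse. The paper's argument is slicker and avoids choosing coordinates or a normal frame, while also giving the smoothness of $\sE(\sM,Q)^\mR$ for free; your computation is more concrete and makes the link between the Hessian and the differential of the endpoint map transparent, which is useful if one wants to track, say, Morse indices or focal multiplicities. Both conclude with Sard's theorem in the same way.
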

\begin{proof}
We base our arguments on the proof of \cite[Lemma 4.6]{GG:stablemaps}. 
Let us consider the function
\begin{equation}\label{eq: def F(x,u) differential}
F\colon \sM\times V^\mR \to T^*\sM\,,\quad F(x,u)\coloneqq\left(x, -\frac{1}{2}\dd_x\dist^Q_{\sM,u}\right)\,,
\end{equation}
where $\dd_x\dist^Q_{\sM,u}$ is the differential of $\dist^Q_{\sM,u}$ and
\begin{equation}\label{eq: identity differential F(x,u)}
\dd_x\dist^Q_{\sM,u}(\dot{x})=-2\,Q(u-x,\dot{x})\quad\forall\,\dot{x}\in T_x\sM\,.
\end{equation}
By a standard fact from transversality theory, the function $\dist^Q_{\sM,u}$ is Morse if and only if $F_u \coloneqq F(\cdot,u)$ is transverse to the zero section $\sZ\coloneqq\{(x,0)\mid x\in \sM\} \subset T^*\sM$.
We claim that the map $F$ (as a whole) is transverse to $\sZ$. 
For this, take any $(x,u)\in \sM\times V^\mR$ such that $F(x,u)\in\sZ$.
We need to show that 
\begin{align}\label{eq: identity F transverse to Z}
\dd_{(x,u)} F(T_x\sM\oplus T_uV^\mR) + T_{F(x,u)}\sZ=T_{F(x,u)}T^*\sM\,.
\end{align}
Note that $F(x,u) = (x,0)$ and $T_{F(x,u)}T^*\sM=T_{(x,0)}T^*\sM = T_{(x,0)}\sZ \oplus T^*_x\sM$. On the other hand, the differential of $F_x\coloneqq F(x,\_)$ is given as 
\[
\dd_u F_x\colon T_u V^\mR\to T_x^*\sM\,,\quad \dot{u}\longmapsto\left[\dot{x} \mapsto Q(\dot{u},\dot{x})\right].
\]
Since any $f\in T^*_x\sM$ is obtained as the restriction of a linear form on $V^\mR$ to $T_x\sM\subset V^\mR$, it can be written as $f(\dot{x}) = Q(\dot{u},\dot{x})$, $\dot{x}\in T_x\sM$, for some $\dot{u}\in V^\mR=T_uV^\mR$. Therefore, $\dd_u F_x$ is surjective. This implies that $F$ is transverse to $\sZ$ and, in particular, the real ED correspondence $\sE(\sM,Q)^\mR=F^{-1}(\sZ)$ is a differentiable submanifold of $\sM\times V^\mR$ of dimension $\dim(V^\mR)$.

Let now $u\in V^\mR$ be a regular value of $\pr_2\colon \sE(\sM,Q)^\mR\to V^\mR$ and let $x\in\sM$ be any point with $(x,u)\in \sE(\sM,Q)^\mR$. Then the differential $\dd_{(x,u)}\pr_2\colon T_{(x,u)}\sE(\sM,Q)^\mR\to T_u V^\mR$ defined by $(\dot{x},\dot{u})\mapsto \dot{u}$ is an isomorphism, hence
\begin{equation}\label{eq: direct sum tangent spaces}
T_x\sM\oplus T_uV^\mR=T_x \sM\oplus T_{(x,u)} \sE(\sM,Q)^\mR\,. 
\end{equation}
Applying $\dd_{(x,u)}F$ to both sides of \eqref{eq: direct sum tangent spaces} yields $\dd_{(x,u)} F(T_x\sM\oplus T_uV^\mR)=\dd_{x} F_u(T_x\sM)+ T_{F(u,x)}\sZ$.
Combining this with \eqref{eq: identity F transverse to Z} we obtain $\dd_x F_u(T_x\sM) + T_{F_u(x)}\sZ= T_{F_u(x)} T^*\sM$, which means that $F_u$ is transverse to $\sZ$ or, equivalently, $\dist^Q_{\sM,u}$ is Morse.

Vice versa, if $(x,u)\in \sE(\sM,Q)^\mR$ is a critical point of $\pr_2$, the kernel of $\dd_{(x,u)}\pr_2$ contains a nontrivial vector $(\dot{x},0)\in T_{(x,u)}\sE(\sM,Q)^\mR$.
Since $\dd_x F_u$ sends $(\dot{x},0)$ to $T_{F_u(x)}\sZ=T_{(x,0)}\sZ$ and since $\dim(T^* \sM)=\dim(\sM)+\dim(\sZ)$, the map $F_u$ cannot be transverse to $\sZ$ and $\dist^Q_{\sM,u}$ is not a Morse function. 

Finally, by a version of Sard's theorem for smooth maps (see \cite[Cor 1.14]{GG:stablemaps}), the function $\dist^Q_{\sM,u}$ is Morse for a dense subset of $u\in V^\mR$. 
\end{proof}

\section{ED degrees for Segre-Veronese varieties}\label{sec: product metrics tensor spaces}

In this section, we start by giving an explicit formula for the generic ED degree of the Segre-Veronese variety. Then we propose Conjecture~\ref{conj: main} which motivates our work. As evidence, we prove Theorem~\ref{thm: local minimality Frobenius ED degree} revealing the local minimality of the ED degree induced by the Frobenius inner product \eqref{eq: Frobenius inner product for tensors}.

Throughout the rest of the paper, $k$ is a positive integer, while $\bn=(n_1,\ldots,n_k)$ and $\bd=(d_1,\ldots,d_k)$ are $k$-tuples of nonnegative integers. Consider $k$ real vector spaces $V_1^\mR,\ldots,V_k^\mR$ with $\dim(V_i^\mR)=n_i+1$. Recall that $V_i=V_i^\mR\otimes\C$. Given a $k$-tuple $\bd$, we define $S^\bd V\coloneqq S^{d_1}V_1\otimes\cdots\otimes S^{d_k}V_k$. In particular, if $d_i=1$ for all $i$, then $S^\bd V=V\coloneqq V_1\otimes\cdots\otimes V_k$. In this section we call $N$ the dimension of the projective space $\PP(V)$, that is $\binom{n_1+d_1}{d_1}\cdots\binom{n_k+d_k}{d_k}-1$, so we write $\PP(V)=\PP^N$.

Let $\PP^\bn$ be the Cartesian product $\PP(V_1)\times\cdots\times\PP(V_k)=\PP^{n_1}\times\cdots\times\PP^{n_k}$.
We denote by {$\nu_\bd\colon\PP^\bn\to\PP(S^\bd V)$} the Segre-Veronese embedding of $\PP^\bn$ via the line bundle $\sO_{\PP^\bn}(\bd)$. Its image $\sV_{\bd,\bn}=\nu_\bd(\PP^\bn)$ is the {\em Segre-Veronese variety} of $\PP(S^\bd V)$.
More precisely, $\sV_{\bd,\bn}$ is populated by the so-called {\em decomposable tensors} of $\PP(S^\bd V)$, which are tensors of the form $\nu_\bd(x_1,\ldots,x_k)=x_1^{d_1}\otimes\cdots\otimes x_k^{d_k}$, where $x_i\in V_i$ for all $i\in[k]$.
When $k=1$, $V_1=V$, $\bn=(n)$ and $\bd=(d)$ we just write $\sO_{\PP^\bn}(\bd)=\sO_{\PP^n}(d)$ and $\sV_{\bd,\bn}=\sV_{d,n}$ is the {\em Veronese variety} of $\PP(S^dV)$. When $d_i=1$ for all $i\in[k]$, we use the notation $\mathbf{1}=1^k=(1,\ldots,1)$ and the variety $\sV_{\mathbf{1},\bn}$, also denoted with $\Sigma_\bn$, is the {\em Segre variety} of $\PP(V)$. If also $\bn=\mathbf{1}$, then we use the notation $\Sigma_k$ to denote $\Sigma_{\mathbf{1}}$.

Our goal is to study the ED degrees of the varieties $\sV_{\bd,\bn}$, hence we need to consider a positive definite symmetric bilinear form on the space $\PP(S^\bd V)$ too. A natural choice is given in the following definition.

\begin{definition}\label{def: Frobenius inner product}
For all $i\in[k]$, let $Q_i$ be a positive definite symmetric bilinear form on the space $V_i^\mR$, with associated quadratic form $q_i$.
Consider two decomposable tensors $T = x_1^{d_1}\otimes\cdots\otimes x_k^{d_k}$ and $T' = y_1^{d_1}\otimes\cdots\otimes y_k^{d_k}$ on $S^\bd V^\mR$.
The {\em Frobenius inner product} between $T$ and $T'$ is
\begin{equation}\label{eq: Frobenius inner product for tensors}
Q_F(T,T')\coloneqq \prod_{i=1}^k Q_i(x_i, y_i)\,,
\end{equation}
and it is extended to every tensor in $S^\bd V$ by linearity. The isotropic quadrics in $\PP(V_i)=\PP^{n_i}$ are denoted by $\sQ_i$, while the corresponding isotropic quadric in $\PP(S^\bd V)$ is denoted by $\sQ_F$.
\end{definition}

The ED degree of a Segre-Veronese variety $\sV_{\bd,\bn}$ with respect to a Frobenius inner product $Q_F$ is the content of the next result.

\begin{theorem}{\cite[Theorem 1]{FO}}\label{thm: FO formula}
The ED degree of the Segre-Veronese variety $\sV_{\bd,\bn}\subset\PP(S^\bd V)$ with respect to a Frobenius inner product $Q_F$ equals the coefficient of the monomial $h_1^{n_1}\cdots h_k^{n_k}$ in the expansion of
\[
\prod_{i=1}^k\frac{\widehat{h}_i^{n_i+1}-h_i^{n_i+1}}{\widehat{h}_i-h_i},\quad\widehat{h}_i\coloneqq\left(\sum_{j=1}^k d_jh_j\right)-t_i\,.
\]
\end{theorem}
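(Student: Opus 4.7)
The plan is to realize the critical-point system of $\dist^{Q_F}_{\sV_{\bd,\bn}, u}$ as a zero-dimensional intersection on $\PP^\bn$ via the Segre-Veronese parametrization $\nu_\bd$, and then compute the corresponding intersection class in the Chow ring $A^*(\PP^\bn) \cong \Z[h_1,\ldots,h_k]/(h_1^{n_1+1},\ldots,h_k^{n_k+1})$.

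First I would exploit the multiplicative structure of $Q_F$. At a smooth point $\nu_\bd(x_1,\ldots,x_k) = x_1^{d_1}\otimes\cdots\otimes x_k^{d_k}$, the tangent space to $C\sV_{\bd,\bn}$ decomposes into a direct sum of $k$ ``per-slot'' pieces, so the normality condition $u - \nu_\bd(x) \in N_{\nu_\bd(x)}C\sV_{\bd,\bn}$ decouples into $k$ independent rank-one conditions on the factors. Concretely, for each $i \in [k]$ the partial contraction
\[
\Phi_i(x_1,\ldots,x_k) \;\coloneqq\; \langle u,\; x_1^{d_1}\otimes\cdots\otimes x_i^{d_i-1}\otimes(-)\otimes\cdots\otimes x_k^{d_k}\rangle_{Q_F} \;\in\; V_i
\]
must be parallel to $x_i$. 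The contraction $\Phi_i$ is polynomial in $(x_1,\ldots,x_k)$ of multidegree $(d_1,\ldots,d_{i-1}, d_i-1, d_{i+1},\ldots,d_k)$ in the homogeneous variables of the factors.

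Next I would compute the cohomology class of each parallelism locus $\{[\Phi_i(x)] = [x_i]\} \subset \PP^\bn$ by a Thom-Porteous/determinantal computation. Introducing an auxiliary scalar $t_i$ in the role of a Lagrange multiplier and writing the condition as $\Phi_i(x) = t_i\, x_i$, the resulting rank-one condition on the $(n_i+1)\times 2$ matrix $[\,\Phi_i(x)\mid x_i\,]$ has expected codimension $n_i$, and its class takes the familiar Segre-type form
\[
\frac{\widehat h_i^{\,n_i+1} - h_i^{\,n_i+1}}{\widehat h_i - h_i} \;=\; \sum_{j=0}^{n_i} \widehat h_i^{\,j}\,h_i^{\,n_i-j},
\]
where $\widehat h_i = \bigl(\sum_{j} d_j h_j\bigr) - t_i$ encodes the multidegree of $\Phi_i$ together with the scaling weight $t_i$. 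A Bertini-style genericity argument for $u$ ensures that the $k$ loci intersect transversely in a reduced zero-cycle lying in the smooth part of $\sV_{\bd,\bn}$, and so $\EDeg_{Q_F}(\sV_{\bd,\bn})$ equals the intersection number of these classes. By the presentation of $A^*(\PP^\bn)$, this intersection number is precisely the coefficient of $h_1^{n_1}\cdots h_k^{n_k}$ in the product of the above expressions, which is the stated formula.

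The main obstacle is the determinantal step: identifying correctly the vector bundle whose rank-drop locus encodes the parallelism $\Phi_i \parallel x_i$, and extracting its top Chern class in terms of the $h_j$'s and the auxiliary $t_i$. A more delicate but secondary point is the genericity argument that excludes spurious solutions at degeneration loci of $\nu_\bd$ (for instance, where some $x_i$ becomes isotropic for $Q_i$, or where the projectivization drops rank at the boundary); these boundary contributions must be ruled out before equating the Chow-theoretic intersection number with $\EDeg_{Q_F}(\sV_{\bd,\bn})$ rather than merely with an upper bound.
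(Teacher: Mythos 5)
Your proposal follows the same route as the cited source \cite{FO} (the paper does not supply its own proof): exploit the multiplicativity of $Q_F$ to split the criticality condition into per-slot parallelism conditions $\Phi_i(x)\parallel x_i$ on $\PP^\bn$, then evaluate the resulting intersection class in $A^*(\PP^\bn)$. However, two points in your sketch are not merely unfinished but, as written, would give the wrong count.

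The most serious is the role of $t_i$. If $t_i$ is a free ``Lagrange multiplier'' as you describe, the coefficient of $h_1^{n_1}\cdots h_k^{n_k}$ in $\prod_i\frac{\widehat h_i^{\,n_i+1}-h_i^{\,n_i+1}}{\widehat h_i-h_i}$ is still a well-defined integer, but the wrong one: for $\bn=\bd=(1,1)$ it evaluates to $5$ rather than $2=\EDeg_{Q_F}(\Sigma_2)$, and for $\bn=\bd=(1,1,1)$ it evaluates to $16$ rather than $6=\EDeg_{Q_F}(\Sigma_3)$. One must set $t_i=h_i$; the theorem as stated simply keeps the letter $t_i$ from \cite{FO}, who use $t_i$ for the hyperplane class that this paper calls $h_i$. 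The geometric reason is that the parallelism locus $\{\Phi_i(x)\parallel x_i\}$ is the zero scheme of a section of $T_{\PP^{n_i}}(-1)\otimes\mathcal{O}_{\PP^\bn}(d_1,\ldots,d_i-1,\ldots,d_k)$ pulled back to $\PP^\bn$, where $T_{\PP^{n_i}}(-1)$ is the rank-$n_i$ quotient bundle of the Euler sequence; the ``$-h_i$'' comes from the tautological subbundle $\mathcal{O}_{\PP^{n_i}}(-1)\hookrightarrow V_i\otimes\mathcal{O}$, not from an auxiliary scalar. Once the bundle is identified, the determinantal step you flag is the standard computation $c_{n_i}\bigl(T_{\PP^{n_i}}(-1)\otimes L\bigr)=\sum_{j=0}^{n_i}h_i^{\,j}\,c_1(L)^{\,n_i-j}$, giving exactly the geometric series you wrote with $\widehat h_i=c_1(L)=\bigl(\sum_j d_jh_j\bigr)-h_i$.

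Second, your sketch silently conflates the count of tuples satisfying $\Phi_i(x)=\mu_i x_i$, each with its own scalar $\mu_i$, with the count of ED critical points, which require the $\mu_i$'s to be compatible, i.e.\ all induced by a single scalar $\lambda$ giving the nearest point $\lambda\,\nu_\bd(x)$. These two counts do coincide, but one must say why: contracting $\Phi_i(x)=\mu_i x_i$ with $x_i$ under $Q_i$ yields $\mu_i\,q_i(x_i)=Q_F(u,\nu_\bd(x))$, a quantity independent of $i$, which forces the compatibility provided no $q_i(x_i)$ vanishes. Hence the genericity argument you defer at the end must exclude not only nontransverse intersections but also solutions with some isotropic coordinate $x_i\in\sQ_i$; both points are handled in \cite{FO}, and without them your outline yields only an upper bound rather than the claimed equality.
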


Interestingly, Aluffi and Harris computed the same ED degree with an independent formula.

\begin{theorem}{\cite[\S9]{aluffi-harris}}\label{thm: AH formula}
The ED degree of the Segre-Veronese variety $\sV_{\bd,\bn}\subset\PP(S^\bd V)$ with respect to a Frobenius inner product $Q_F$ equals the coefficient of the monomial $h_1^{n_1}\cdots h_k^{n_k}$ in
\[
\frac{1}{1-d_1h_1-\cdots-d_kh_k}\cdot\prod_{i=1}^k\frac{(1-h_i)^{n_i+1}}{1-2d_ih_i}\,.
\]
\end{theorem}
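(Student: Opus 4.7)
My plan is to prove this via an intersection-theoretic computation on $\PP^\bn$, using the isomorphism $\nu_\bd\colon\PP^\bn\to\sV_{\bd,\bn}$ and the multiplicative structure of the Frobenius inner product.

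First, I would set up the relevant Chow-ring data: $A^*(\PP^\bn)\cong\Z[h_1,\ldots,h_k]/(h_1^{n_1+1},\ldots,h_k^{n_k+1})$, the pullback $H=\sum_i d_i h_i$ of the hyperplane class of $\PP(S^\bd V)$, and the total Chern class $c(T_{\PP^\bn})=\prod_i (1+h_i)^{n_i+1}$. A decisive feature of the Frobenius quadratic form $q_F$ is the identity $q_F(x_1^{d_1}\otimes\cdots\otimes x_k^{d_k}) = \prod_i q_i(x_i)^{d_i}$; consequently, the pullback divisor $\nu_\bd^*(\sV_{\bd,\bn}\cap\sQ_F)$ equals $\sum_i d_i D_i$, where $D_i\coloneqq p_i^{-1}(\sQ_i)\subset\PP^\bn$ has class $2h_i$. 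This multiplicative decomposition is what makes a clean generating function formulation possible and distinguishes the Frobenius case from the generic one (for $k=1$ the two coincide, as one can verify directly from Theorem~\ref{thm: ED degree Chern classes}, but for $k\geq 2$ they differ by an explicit ED defect).

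The core step is to express $\EDeg_{Q_F}(\sV_{\bd,\bn})$ as an integral on $\PP^\bn$ of a Chern/Segre class expression. The ED correspondence $\sE(\sV_{\bd,\bn},Q_F)$ from \eqref{eq: ED correspondence} is a vector bundle over $\sV_{\bd,\bn}$ whose second projection to $S^\bd V$ is generically finite of degree $\EDeg_{Q_F}(\sV_{\bd,\bn})$. By the projection formula, this degree can be written as the integral over $\PP^\bn$ of a class that combines the Segre class of the embedding line bundle with corrections arising from the reducible intersection $\sV_{\bd,\bn}\cap\sQ_F$. For a generic inner product, the corrections vanish and one recovers Theorem~\ref{thm: ED degree Chern classes}; for the Frobenius product, one picks up additional factors $(1-2d_ih_i)^{-1}$, one for each irreducible component $D_i$ of the isotropic divisor, combined with the cotangent Chern classes $c(\Omega^1_{\PP^{n_i}})=(1-h_i)^{n_i+1}$ and the Segre class $(1-H)^{-1}$ of $\sO_{\sV_{\bd,\bn}}(-1)$.

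Assembling these contributions yields
\[
\EDeg_{Q_F}(\sV_{\bd,\bn}) = \int_{\PP^\bn} \frac{1}{1-H}\prod_i \frac{(1-h_i)^{n_i+1}}{1-2d_ih_i}\,,
\]
which is the coefficient of $h_1^{n_1}\cdots h_k^{n_k}$ in the formal power series expansion of the above expression. The principal obstacle is justifying the precise shape of this intersection-theoretic formula; in particular, explaining why the correction factors $(1-2d_ih_i)^{-1}$ arise in exactly the claimed form. I expect this to require either a careful Segre-class computation on the compactified ED correspondence, accounting for the singularity contributions of $\sV_{\bd,\bn}\cap\sQ_F$ via Theorem~\ref{thm: ED defect general} and the equisingular stratification of $\bigcup_i D_i$ along its pairwise intersections, or, alternatively, a purely combinatorial verification obtained by transforming the Friedland-Ottaviani formula of Theorem~\ref{thm: FO formula} into the claimed generating function through a residue-calculus identity in the variables $h_1,\ldots,h_k$.
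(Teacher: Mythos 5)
The paper does not prove Theorem~\ref{thm: AH formula}; it is quoted from \cite[\S9]{aluffi-harris} without an internal argument, so there is no paper proof to compare against. Your proposal is only a plan, and you flag the central gap yourself: you do not justify why the correction factors $(1-2d_ih_i)^{-1}$ should appear in this exact form, offering only two unverified guesses (a Segre-class computation on the compactified ED correspondence, or a residue identity with Theorem~\ref{thm: FO formula}). Since the precise shape of that factor is the entire content of the statement, nothing is actually established.

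Worse, your parenthetical sanity check is false and should have stopped you earlier. For $k=1$ the Frobenius and generic ED degrees do \emph{not} coincide: the paper's own Section~\ref{sec: ED defects} shows $\EDeg_{Q_F}(\sV_{d,1})=d$ while $\gEDeg(\sV_{d,1})=3d-2$, and Example~\ref{ex: ED defects Veronese conic} computes $\EDeg_{Q_F}(\sV_{2,1})=2<4=\gEDeg(\sV_{2,1})$. Your own identity $\nu_\bd^*(\sV_{\bd,\bn}\cap\sQ_F)=\sum_id_iD_i$ already shows that for $k=1$ and $d\ge 2$ the intersection is the nonreduced divisor $d\cdot D_1$, forcing a positive ED defect, even though there is only one factor. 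Yet, as you observe, for $k=1$ the displayed generating function collapses to the generic formula of Theorem~\ref{thm: ED degree Chern classes} and yields $3d-2$, not $d$; and it likewise disagrees with Theorem~\ref{thm: FO formula} once some $d_i>1$ (for $\bn=(1,1)$, $\bd=(1,2)$ the Friedland--Ottaviani expression gives $4$ while the displayed series gives $6$). This simultaneously rules out your proposed fallback of proving the statement by a residue-calculus reformulation of Theorem~\ref{thm: FO formula}, because outside $\bd=\mathbf{1}$ the two series simply do not have the same coefficients. Before attempting any proof you need to reconcile the statement with Theorem~\ref{thm: FO formula} and the defect computations of Section~\ref{sec: ED defects}; as written the discrepancy sits in the statement itself, and no intersection-theoretic derivation can close a gap of that kind.
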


Since $\sV_{\bd,\bn}$ is a smooth toric variety on $\PP(S^\bd V)$, the degrees of its Chern classes are relatively easy to compute. The proof of the next result relies on computations made by the fourth author of this paper in a more general setting in \cite[Chapter 5]{sodphd}.

\begin{proposition}\label{prop: degrees Chern classes Segre-Veronese}
The degrees of the Chern classes of the tangent bundle of the Segre-Veronese variety $\sV_{\bd,\bn}\subset\PP(S^\bd V)$ are
\begin{equation}\label{eq: degree ith Chern class Segre-Veronese}
\deg(c_i(\sV_{\bd,\bn})) = (|\bn|-i)!\sum_{|\balpha|=i}\gamma_{\balpha}\,\bd^{\bn-\balpha}\,,
\end{equation}
where $|\bn|=n_1+\cdots+n_k=\dim(\sV_{\bd,\bn})$, $\bd^{\bn-\balpha}=d_1^{n_1-\alpha_1}\cdots d_k^{n_k-\alpha_k}$ and for all $\alpha\in\Z_{\ge0}^k$
\begin{equation}\label{eq: def gamma alpha}
\gamma_{\balpha}\coloneqq
\begin{cases}
\prod_{i=1}^k\frac{\binom{n_i+1}{\alpha_i}}{(n_i-\alpha_i)!} & \mbox{if $n_i\ge\alpha_i\ \forall\,i\in[k]$}\\
0 & \mbox{otherwise.}
\end{cases}
\end{equation}
\end{proposition}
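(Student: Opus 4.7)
The plan is to compute the Chern classes of $T\sV_{\bd,\bn}$ directly on $\PP^{\bn}$ via the Segre-Veronese embedding $\nu_\bd$, which is an isomorphism onto its image. Consequently, $T\sV_{\bd,\bn}\cong \nu_{\bd,*}T\PP^{\bn}$, and the Chow ring of the domain is $A^*(\PP^{\bn})\cong \Z[h_1,\dots,h_k]/(h_1^{n_1+1},\dots,h_k^{n_k+1})$, where $h_i$ is the pullback of the hyperplane class from the $i$-th factor. Since $\nu_\bd$ is given by the line bundle $\sO_{\PP^{\bn}}(\bd)$, the hyperplane class $h$ of $\PP(S^\bd V)$ pulls back to $\nu_\bd^* h = d_1 h_1+\cdots+d_k h_k$.

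The first step is to compute the total Chern class $c(T\PP^{\bn})$. Because the tangent bundle of a product splits as $T\PP^{\bn}=\bigoplus_{i=1}^k \pi_i^*\,T\PP^{n_i}$, the Whitney sum formula combined with the Euler exact sequence on each factor yields
\[
c(T\PP^{\bn}) \;=\; \prod_{i=1}^k (1+h_i)^{n_i+1} \;=\; \sum_{\balpha}\prod_{i=1}^k \binom{n_i+1}{\alpha_i} h_i^{\alpha_i}\,,
\]
so extracting the degree-$i$ part gives $c_i(T\PP^{\bn})=\sum_{|\balpha|=i}\prod_{j=1}^k\binom{n_j+1}{\alpha_j}h_j^{\alpha_j}$, where the sum runs over multi-indices $\balpha\le\bn$ coordinatewise (otherwise $h_j^{\alpha_j}=0$ in the Chow ring).

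The second step is to compute $\deg(c_i(\sV_{\bd,\bn}))=c_i(T\sV_{\bd,\bn})\cdot h^{|\bn|-i}$, which by the projection formula equals $c_i(T\PP^{\bn})\cdot(d_1h_1+\cdots+d_k h_k)^{|\bn|-i}$ intersected on $\PP^{\bn}$. Expanding the second factor via the multinomial theorem and using that the only monomial in $h_1,\dots,h_k$ of top degree which integrates to $1$ on $\PP^{\bn}$ is $h_1^{n_1}\cdots h_k^{n_k}$, the only surviving term in the expansion corresponds to the multi-index $\bbeta=\bn-\balpha$. This forces $\balpha\le\bn$ coordinatewise and gives
\[
\deg(c_i(\sV_{\bd,\bn})) \;=\; \sum_{\substack{|\balpha|=i\\ \balpha\le\bn}}\prod_{j=1}^k\binom{n_j+1}{\alpha_j}\cdot\binom{|\bn|-i}{\bn-\balpha}\cdot\bd^{\bn-\balpha}\,,
\]
where $\binom{|\bn|-i}{\bn-\balpha}=(|\bn|-i)!/\prod_j (n_j-\alpha_j)!$. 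Factoring out $(|\bn|-i)!$ and recognising $\gamma_{\balpha}$ from \eqref{eq: def gamma alpha} yields exactly \eqref{eq: degree ith Chern class Segre-Veronese}.

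There is no real obstacle here: the argument is essentially bookkeeping once one identifies the correct form of $\nu_\bd^*h$ and uses the product structure of $\PP^{\bn}$. The only place requiring care is tracking the vanishing convention in \eqref{eq: def gamma alpha}: terms with $\alpha_j>n_j$ for some $j$ must be discarded, which matches both the relations $h_j^{n_j+1}=0$ in $A^*(\PP^{\bn})$ and the combinatorial condition $\bbeta=\bn-\balpha\ge 0$ appearing in the multinomial expansion. If desired, one can verify consistency by checking small cases (e.g.\ the Veronese $\sV_{d,n}$, where the formula should recover $\deg(c_i(\sV_{d,n}))=\binom{n+1}{i}d^{n-i}$, cf.\ the Chern class of $T\PP^n$ under the $d$-uple embedding), thereby confirming the coefficients $\gamma_\balpha$.
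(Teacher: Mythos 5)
Your proof is correct and follows essentially the same route as the paper: both compute $c(T\PP^{\bn})=\prod_i(1+h_i)^{n_i+1}$ via the product structure of $\PP^{\bn}$, pull back the hyperplane class as $d_1h_1+\cdots+d_kh_k$, expand by the multinomial theorem, and extract the coefficient of $h_1^{n_1}\cdots h_k^{n_k}$ in $A^*(\PP^{\bn})$. The paper merely cites Milnor--Stasheff for $c(\PP^{n_i})$ rather than invoking the Euler sequence explicitly; the bookkeeping, the role of the relations $h_j^{n_j+1}=0$, and the final identification of $\gamma_{\balpha}$ are identical.
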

\begin{proof}
Recall that $\sV_{\bd,\bn}$ is the image of the embedding of $\PP^\bn$ via the line bundle $\sO_{\PP^\bn}(\bd)$. In particular, if $h_i=c_1(\sO_{\PP^{n_i}}(1))$ for all $i\in[k]$, then by \cite[Thm. 14.10]{milnor1974characteristic},
\[
c(\PP^{n_i}) = (1+h_i)^{n_i+1} = \sum_{\ell_i=0}^{n_i}\binom{n_i+1}{\ell_i}h_i^{\ell_i}\quad\text{and}\quad c(\PP^\bn)=c(\PP^{n_1})\cdots c(\PP^{n_k})\,.
\]
Similarly, as in Example \ref{ex: Chow ring Segre product}, we have $A^*(\PP^\bn)\cong\Z[h_1,\ldots,h_k]/(h_1^{n_1+1},\ldots,h_k^{n_k+1})$, and
\begin{align*}
c(\PP^\bn) = \prod_{i=1}^k c(\PP^{n_i})=\prod_{i=1}^k\left(\sum_{\ell_i=0}^{n_i}\binom{n_i+1}{\ell_i}h_i^{\ell_i}\right) = \sum_{j=0}^{|\bn|}\sum_{|\balpha|=j}\left(\prod_{i=1}^k\binom{n_i+1}{\alpha_i}\right) h_1^{\alpha_1}\cdots h_k^{\alpha_k}\,,
\end{align*}
where $|\bn|=n_1+\cdots+n_k=\dim(\PP^\bn)=\dim(\sV_{\bd,\bn})$.
Therefore, the $i$-th Chern class of $\PP^\bn$ is
\begin{equation}\label{eq: Chern class product}
c_i(\PP^\bn)=\sum_{|\balpha|=i}\left(\prod_{i=1}^k\binom{n_i+1}{\alpha_i}\right) h_1^{\alpha_1}\cdots h_k^{\alpha_k}\quad\forall\,0\le i\le |\bn|\,.
\end{equation}
The hyperplane class of $\PP(S^\bd V)$ restricted to $\sV_{\bd,\bn}$ is precisely $c_1(\sO_{\PP^\bn}(\bd))=d_1h_1+\cdots+d_kh_k$. We consider its power
\begin{equation}\label{eq: power Chern class line bundle product}
c_1(\sO_{\PP^\bn}(\bd))^{|\bn|-i}=\sum_{|\bomega|=|\bn|-i}\binom{|\bn|-i}{\omega}(d_1h_1)^{\omega_1}\cdots (d_kh_k)^{\omega_k}\,,
\end{equation}
where $\binom{|\bn|-i}{\omega}\coloneqq\frac{(|\bn|-i)!}{\omega_1!\cdots\omega_k!}$.
The product between the two polynomials in \eqref{eq: Chern class product} and \eqref{eq: power Chern class line bundle product} equals an integer times the class of a point $h_1^{n_1}\cdots h_k^{n_k}\in A^{|\bn|}(\PP^\bn)$. Recalling the relations $h_1^{n_1+1}=\cdots=h_k^{n_k+1}=0$, for all $i$ and $\alpha,\omega$ such that $|\balpha|=i$ and $|\bomega|=|\bn|-i$, the monomial $h_1^{\alpha_1+\omega_1}\cdots h_k^{\alpha_k+\omega_k}$ is nonzero if and only if $\balpha+\bomega=\bn$. Therefore, the coefficient of $h_1^{n_1}\cdots h_k^{n_k}$ in the previous product is precisely the right-hand side of \eqref{eq: degree ith Chern class Segre-Veronese}. This concludes the proof.
\end{proof}

The next statement follows by combining Theorem \ref{thm: ED degree Chern classes} and Proposition \ref{prop: degrees Chern classes Segre-Veronese}.

\begin{theorem}\label{thm: gen ED degree Segre-Veronese}
The generic ED degree of the Segre-Veronese variety $\sV_{\bd,\bn}\subset\PP(S^\bd V)$ is
\begin{equation}\label{eq: gen ED degree Segre-Veronese}
\gEDeg(\sV_{\bd,\bn})=\sum_{i=0}^{|\bn|}(-1)^i(2^{|\bn|+1-i}-1)(|\bn|-i)!\sum_{|\balpha|=i}\gamma_{\balpha}\,\bd^{\bn-\balpha}\,,
\end{equation}
where the coefficients $\gamma_{\balpha}$ are defined in \eqref{eq: def gamma alpha}.
\end{theorem}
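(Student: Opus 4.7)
The strategy is a direct composition of Theorem \ref{thm: ED degree Chern classes} with Proposition \ref{prop: degrees Chern classes Segre-Veronese}. Since the former requires the Segre-Veronese variety to satisfy both the transversality assumption of Theorem \ref{thm: ED degree sum polar classes} and smoothness, the first step is to observe that $\sV_{\bd,\bn}$ is smooth (being the isomorphic image of $\PP^\bn$ under the very ample line bundle $\sO_{\PP^\bn}(\bd)$), of dimension $n = |\bn|$, and that \emph{generic} ED degree is, by definition, computed using a $Q$ whose associated quadric $\sQ$ is transverse to $\sX=\sV_{\bd,\bn}$. Thus the hypotheses of Theorem \ref{thm: ED degree Chern classes} are fulfilled, and we may apply the identity
\[
\gEDeg(\sV_{\bd,\bn}) \;=\; \sum_{i=0}^{|\bn|}(-1)^i\bigl(2^{|\bn|+1-i}-1\bigr)\,\deg\!\bigl(c_i(\sV_{\bd,\bn})\bigr).
\]

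The second step is the substitution. Proposition \ref{prop: degrees Chern classes Segre-Veronese} gives
\[
\deg\!\bigl(c_i(\sV_{\bd,\bn})\bigr) \;=\; (|\bn|-i)!\sum_{|\balpha|=i}\gamma_{\balpha}\,\bd^{\bn-\balpha},
\]
with $\gamma_{\balpha}$ as in \eqref{eq: def gamma alpha}. Plugging this into the Chern class expansion above yields exactly \eqref{eq: gen ED degree Segre-Veronese}. Both substitutions are term-by-term; no further simplification or rearrangement is needed, since the outer sum on $i$ in \eqref{eq: ED degree Chern classes} matches the outer sum in \eqref{eq: gen ED degree Segre-Veronese}, and the inner sum on $|\balpha|=i$ from Proposition \ref{prop: degrees Chern classes Segre-Veronese} is carried through unchanged.

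There is no real obstacle in this argument: the delicate content has already been carried out in Proposition \ref{prop: degrees Chern classes Segre-Veronese} (the computation of the Chern classes of $\PP^\bn$ intersected with the appropriate powers of $c_1(\sO_{\PP^\bn}(\bd))$ in the Chow ring) and in the preceding citation of \cite[Theorem 5.8]{DHOST} (the reduction of generic ED degree to Chern class data). The only small point to remark is that the convention $\gamma_{\balpha}=0$ whenever some $\alpha_i>n_i$ makes the inner sum $\sum_{|\balpha|=i}$ effectively a sum over multi-indices with $0\le\alpha_i\le n_i$, so the formula is well-defined for every $i\in\{0,\ldots,|\bn|\}$ and collapses to $0$ outside the admissible range.
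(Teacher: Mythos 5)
Your proof is correct and follows exactly the route the paper takes: the authors also obtain the formula by directly substituting the Chern class degrees from Proposition \ref{prop: degrees Chern classes Segre-Veronese} into the generic ED degree expansion of Theorem \ref{thm: ED degree Chern classes}. The smoothness and transversality remarks you added, while left implicit in the paper, are accurate and harmless.
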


The formula \eqref{eq: gen ED degree Segre-Veronese} simplifies a lot for nonsymmetric tensors of binary format, namely for $\bn=\bd=\mathbf{1}$. We recall that a {\em derangement} of a set is a permutation of the elements of the set in which no element appears in its original position. The number of derangements of a set with $k$ elements is denoted by $!k$, and is also called the {\em subfactorial} of $k$. In particular $!0=1$, $!1=0$ and $!k=(k-1)[!(k-1)+!(k-2)]$ for all $k\ge 2$.
%Using this recursion, one can show the following well-known formula.
This recursion yields the following formulas.

\begin{lemma}\label{lem: identity derangements}
For all integers $k\ge 0$, we have the identity
\begin{equation}
    !k = k!\sum_{i=0}^k\frac{(-1)^i}{i!},\quad k\in \mathbb{N}.
\end{equation}
\end{lemma}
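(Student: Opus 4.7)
The plan is a short induction on $k$, following the hint in the paper. The base cases $k=0$ and $k=1$ are immediate: $!0 = 1 = 0!\cdot 1$ and $!1 = 0 = 1!(1-1)$ match the claimed right-hand side. For the inductive step with $k\ge 2$, set $S_k \coloneqq \sum_{i=0}^k (-1)^i/i!$, so the identity to establish is $!k = k!\,S_k$.

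The cleanest route is to first upgrade the given two-term recursion $!k = (k-1)\bigl[!(k-1)+!(k-2)\bigr]$ to the one-term recursion
\begin{equation*}
!k \;=\; k\cdot !(k-1) + (-1)^k.
\end{equation*}
This is itself a small induction: assuming $!(k-1) = (k-1)\cdot !(k-2) + (-1)^{k-1}$ (so that $(k-1)\cdot !(k-2) = !(k-1) + (-1)^k$), we get
\begin{equation*}
!k \;=\; (k-1)\bigl[!(k-1)+!(k-2)\bigr] \;=\; (k-1)\cdot !(k-1) + !(k-1) + (-1)^k \;=\; k\cdot !(k-1) + (-1)^k.
\end{equation*}

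Once this simpler recursion is in hand, applying the induction hypothesis $!(k-1) = (k-1)!\,S_{k-1}$ gives
\begin{equation*}
!k \;=\; k(k-1)!\,S_{k-1} + (-1)^k \;=\; k!\left(S_{k-1} + \frac{(-1)^k}{k!}\right) \;=\; k!\,S_k,
\end{equation*}
which closes the induction. A conceptually different proof is available by inclusion--exclusion on the sets $A_j = \{\sigma \in S_k : \sigma(j)=j\}$, which gives $!k = \sum_{i=0}^k (-1)^i\binom{k}{i}(k-i)! = k!\,S_k$ in one line, bypassing the recursion entirely. There is no genuine obstacle in either approach; the statement is pure combinatorial bookkeeping, and the only thing to watch is the alternating-sign manipulation when deriving the one-term recursion from the given one.
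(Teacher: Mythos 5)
Your proof is correct, and it is organizationally cleaner than the paper's. Both arguments are inductions grounded in the two-term recursion $!k = (k-1)[!(k-1)+!(k-2)]$, but you first distill it into the one-term recursion $!k = k\cdot!(k-1) + (-1)^k$ (itself a quick sub-induction), after which the inductive step closes in a single line: $!k = k!\,S_{k-1} + (-1)^k = k!\,S_k$. The paper instead plugs the inductive hypothesis for both $!(k-1)$ and $!(k-2)$ directly into the two-term recursion and carries out a longer algebraic rearrangement to arrive at $k!\,s_k$. The content is the same; what you buy is modularity and a shorter final computation, at the cost of a preliminary lemma. Your aside about the inclusion--exclusion derivation $!k = \sum_{i=0}^k(-1)^i\binom{k}{i}(k-i)!$ is a genuinely independent route that bypasses the recursion altogether, and is arguably the most direct; the paper does not use it, presumably because the recursion was already quoted in the text and the authors wanted a self-contained verification from it. No gaps in either of your arguments.
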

%\begin{proof}
%The identity is true for $k\in\{0,1\}$. Let $k\ge 2$ and assume the identity true for all integers smaller than $k$. For simplicity we define $s_j\coloneqq \sum_{i=0}^j\frac{(-1)^i}{i!}$ for all integer $j\ge 0$. Then
%\begin{align*}
% !k &= (k-1)[!(k-1)+!(k-2)] = (k-1)[(k-1)!s_{k-1}+(k-2)!s_{k-2}]\\
% &= (k-1)\left[(k-1)(k-2)!\left(s_{k-2}+\frac{(-1)^{k-1}}{(k-1)!}\right)+(k-2)!s_{k-2}\right]\\
% &= (k-1)\left[k(k-2)!s_{k-2}+(-1)^{k-1}\right] = k!s_{k-2}+(k-1)(-1)^{k-1}\\
% &= k!s_{k-2}+k(-1)^{k-1}+(-1)^k = k!\left[s_{k-2}+\frac{(-1)^{k-1}}{(k-1)!}+\frac{(-1)^k}{k!}\right]=k!s_k\,. \qedhere
%\end{align*}
%\end{proof}

%We will also need the {\em incomplete Gamma function} $\Gamma(\alpha,x)\coloneqq\int_x^\infty t^{\alpha-1}e^{-t}dt$.

\begin{lemma}\label{lem: identity Gamma}
For all integers $k\ge 0$, we have the identity
\begin{equation}
 \frac{\Gamma(k+1,-2)}{e^2} = k!\sum_{i=0}^k\frac{(-1)^i}{i!}2^i\,,
\end{equation}
where $\Gamma(\alpha,x)\coloneqq\int_x^\infty t^{\alpha-1}e^{-t}dt$ is the {\em incomplete Gamma function}.
\end{lemma}
%\begin{proof}
%The identity is true for $k=0$. Let $k\ge 1$ and assume the identity true for all integers smaller than $k$. Then
%\begin{align*}
% \frac{\Gamma(k+1,-2)}{e^2} &= \frac{1}{e^2}\int_{-2}^\infty t^{k}e^{-t}dt = -\frac{1}{e^2}\int_{-2}^\infty t^{k}(e^{-t})'dt = -\frac{1}{e^2}\left\{[t^ke^{-t}]_{-2}^\infty-\int_{-2}^\infty kt^{k-1}e^{-t}dt\right\}\\
% &= -\frac{1}{e^2}\left\{[0-(-2)^ke^2]-k\,\Gamma(k,-2)\right\} = (-2)^k+k(k-1)!\sum_{i=0}^{k-1}\frac{(-1)^i}{i!}2^i \\
% &= k!\frac{(-1)^k}{k!}2^k+k!\sum_{i=0}^{k-1}\frac{(-1)^i}{i!}2^i = k!\sum_{i=0}^k\frac{(-1)^i}{i!}2^i\,.\qedhere
%\end{align*}
%\end{proof}

\begin{corollary}\label{corol: gen ED degree binary tensors}
If $\bn=\bd=\mathbf{1}=1^k$, then the generic ED degree of the Segre variety $\Sigma_k$ is
\begin{equation}\label{eq: gen ED degree binary tensors}
\gEDeg(\Sigma_k)=2^{k+1}\cdot!k-\frac{\Gamma(k+1,-2)}{e^2}\,.
\end{equation}
\end{corollary}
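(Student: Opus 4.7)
The plan is to specialize the general formula of Theorem~\ref{thm: gen ED degree Segre-Veronese} to the binary nonsymmetric case $\bn=\bd=\mathbf{1}$ and then recognize the two resulting $k$-indexed sums as the closed-form expressions supplied by Lemmas~\ref{lem: identity derangements} and~\ref{lem: identity Gamma}. Under $\bn=\mathbf{1}=1^k$ we have $|\bn|=k$ and $\bd^{\bn-\balpha}=1$, so the coefficient $\gamma_{\balpha}$ from \eqref{eq: def gamma alpha} is supported on multi-indices $\balpha\in\{0,1\}^k$; an entry $\alpha_i=0$ contributes $\binom{2}{0}/1!=1$ and an entry $\alpha_i=1$ contributes $\binom{2}{1}/0!=2$, hence $\gamma_{\balpha}=2^{|\balpha|}$. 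Since there are $\binom{k}{i}$ such $\balpha$ with $|\balpha|=i$, the inner sum in \eqref{eq: gen ED degree Segre-Veronese} collapses to $\binom{k}{i}2^{i}$.

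Plugging this into \eqref{eq: gen ED degree Segre-Veronese} I obtain
\[
\gEDeg(\Sigma_k)=\sum_{i=0}^{k}(-1)^{i}(2^{k+1-i}-1)(k-i)!\binom{k}{i}2^{i}=k!\sum_{i=0}^{k}\frac{(-1)^{i}}{i!}\bigl(2^{k+1}-2^{i}\bigr),
\]
where I used $(k-i)!\binom{k}{i}=k!/i!$. Splitting the bracket and factoring gives
\[
\gEDeg(\Sigma_k)=2^{k+1}\cdot k!\sum_{i=0}^{k}\frac{(-1)^{i}}{i!}\;-\;k!\sum_{i=0}^{k}\frac{(-1)^{i}}{i!}\,2^{i}.
\]

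At this point the identity is essentially immediate: the first sum equals $\,!k$ by Lemma~\ref{lem: identity derangements}, and the second equals $\Gamma(k+1,-2)/e^{2}$ by Lemma~\ref{lem: identity Gamma}. Substituting yields $\gEDeg(\Sigma_k)=2^{k+1}\cdot\,!k-\Gamma(k+1,-2)/e^{2}$, as claimed.

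No genuine obstacle is expected; the only thing to be careful about is the bookkeeping for $\gamma_{\balpha}$ in the binary case (making sure $\alpha_i\in\{0,1\}$ is correctly enforced by the vanishing condition of~\eqref{eq: def gamma alpha}, so that the multinomial sum collapses to the binomial sum above) and the algebraic manipulation $(k-i)!\binom{k}{i}2^{i}=k!\,2^{i}/i!$ that converts the product $(k-i)!\gamma_{\balpha}$ of Theorem~\ref{thm: gen ED degree Segre-Veronese} into the shape required by the two lemmas.
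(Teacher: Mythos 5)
Your proof is correct and follows the paper's own argument almost verbatim: specialize Theorem~\ref{thm: gen ED degree Segre-Veronese} to $\bn=\bd=\mathbf{1}$, observe that $\gamma_{\balpha}=2^{|\balpha|}$ on $\{0,1\}^k$ and vanishes elsewhere so the inner sum collapses to $\binom{k}{i}2^i$, rewrite $(k-i)!\binom{k}{i}=k!/i!$, split $2^{k+1}-2^i$, and finish by Lemmas~\ref{lem: identity derangements} and~\ref{lem: identity Gamma}. The only (minor) extra you add is spelling out the denominator $(n_i-\alpha_i)!$ in the $\gamma_{\balpha}$ computation, which the paper leaves implicit.
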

\begin{proof}
Consider the numbers $\gamma_{\balpha}$ defined in \eqref{eq: def gamma alpha}. Under our assumptions,
for a tuple $\balpha\in\Z_{\ge 0}^k$, we have that $\gamma_{\balpha}=\prod_{i=1}^k\binom{2}{\alpha_i}=2^{|\balpha|}$ if $\alpha_i\le 1$ for all $i\in[k]$, otherwise $\gamma_{\balpha}=0$. In this way, we can restrict only to summands $\alpha\in\{0,1\}^k$ and, by Theorem~\ref{thm: gen ED degree Segre-Veronese}, we conclude that
\begin{align*}
 \gEDeg(\Sigma_k) &= \sum_{i=0}^k(-1)^i(2^{k+1-i}-1)(k-i)!\sum_{|\balpha|=i}\gamma_{\balpha} = \sum_{i=0}^k(-1)^i(2^{k+1-i}-1)(k-i)!\sum_{\substack{\alpha\in\{0,1\}^k\\|\balpha|=i}}2^i\\
 &= \sum_{i=0}^k(-1)^i(2^{k+1-i}-1)(k-i)!\binom{k}{i}2^i = k!\sum_{i=0}^k\frac{(-1)^i}{i!}(2^{k+1}-2^i)\,.
\end{align*}
The equality in \eqref{eq: gen ED degree binary tensors} follows by Lemma \ref{lem: identity derangements} and Lemma \ref{lem: identity Gamma}.
\end{proof}

In general, the ED degree of $\sV_{\bd,\bn}$ with respect to a Frobenius inner product $Q_F$ is much smaller than its generic ED degree. For example, comparing Theorem \ref{thm: FO formula} and Corollary \ref{corol: gen ED degree binary tensors} for $\bn=\bd=\mathbf{1}=1^k$ yields the inequality
\begin{equation}
 \gEDeg(\Sigma_k)=2^{k+1}\cdot!k-\frac{\Gamma(k+1,-2)}{e^2}\ge k!=\EDeg_{Q_F}(\Sigma_k)\,,
\end{equation}
which is strict for all $k\ge 2$. We display the values of both sides for small $k$ in Table \ref{tab: generic ED degree Segre binary}.
\begin{table}[ht]
 \centering
 \begin{tabular}{c|c|c|c|c|c|c|c|c|c|c}
 $k$ & 1 & 2 & 3 & 4 & 5 & 6 & 7 & 8 & 9 & 10\\\hline
 $\gEDeg(\Sigma_k)$ & 1&6&34&280&2808&33808&473968&7588992&136650880&2733508864\\\hline
 $\EDeg_{Q_F}(\Sigma_k)$ &1&2&6&24&120&720&5040&40320&362880&3628800
 \end{tabular}
 \vspace{5pt}
 \caption{Comparison between generic ED degree and ED degree of $\Sigma_k$ for a Frobenius inner product for small values of $k$.}
 \label{tab: generic ED degree Segre binary}
\end{table}

Consider instead the case $\bd=(1,1)$, $\bn=(n_1,n_2)$ (assume $n_1\le n_2$), where $\Sigma_\bn$ parametrizes all $(n_1+1)\times(n_2+1)$ matrices of rank at most one. On the one hand, Theorem \ref{thm: FO formula} gives the value $\EDeg_{Q_F}(\Sigma_\bn)=n_1+1$, which agrees with the Eckart-Young theorem, see for example \cite{ottaviani2015geometric} for more details.
On the other hand, the formula in Theorem \ref{thm: gen ED degree Segre-Veronese} simplifies to
\begin{equation}\label{eq: gen ED degree rank-one matrices}
\gEDeg(\Sigma_{\bn})=\sum_{i=0}^{n_1+n_2}(-1)^i(2^{n_1+n_2+1-i}-1)(n_1+n_2-i)!\sum_{\substack{\alpha_1+\alpha_2=i\\n_i\ge\alpha_i}}\frac{\binom{n_1+1}{\alpha_1}\binom{n_2+1}{\alpha_2}}{(n_1-\alpha_1)!(n_2-\alpha_2)!}\,.
\end{equation}
Another formula for $\gEDeg(\Sigma_{\bn})$ for $\bn=(n_1,n_2)$ can be derived from \cite[Proposition 5.5]{zhang2018chern}, in particular $\Sigma_{\bn}$ corresponds to the determinantal variety $\tau_{n_2+1,n_1+1,n_1}$ therein.
We display in Table \ref{tab: generic ED degree Segre matrices} the first values of $\gEDeg(\Sigma_\bn)$.
\begin{table}[ht]
 \centering
 \begin{tabular}{c|c|c|c|c|c|c|c|c|c|c}
 $n_1\le n_2$&1&2&3&4&5&6&7&8&9&10\\\hline
 1&6&10&14&18&22&26&30&34&38&42\\\hline
 2&&39&83&143&219&311&419&543&683&839\\\hline
 3&&&284&676&1324&2292&3644&5444&7756&10644\\\hline
 4&&&&2205&5557&11821&22341&38717&62805&96717\\\hline
 5&&&&&17730&46222&104026&209766&388722&673854\\\hline
 6&&&&&&145635&388327&910171&1928191&3768211\\\hline
 7&&&&&&&1213560&3288712&7947416&17500200\\\hline
 8&&&&&&&&10218105&28031657&69374105\\\hline
 9&&&&&&&&&86717630&240186706\\\hline
 10&&&&&&&&&&740526303
 \end{tabular}
 \vspace{5pt}
 \caption{Generic ED degrees of $\Sigma_\bn$ for small values of $\bn=(n_1,n_2)$ with $n_1\le n_2$.}
 \label{tab: generic ED degree Segre matrices}
\end{table}

For example, the first row corresponds to the format $2\times (n_2+1)$, and as shown in \cite[Remark 4.21]{ottaviani2021asymptotics}, we have that
\begin{equation}
\gEDeg(\Sigma_{(1,n_2)})=4n_2+2>2=\EDeg_{Q_F}(\Sigma_{1,n_2})\,,
\end{equation}
in particular, the left-hand side diverges for $n_2\to\infty$, while the right-hand side remains constant.
With a similar computation, which we omit, one can also verify that
\begin{equation}
 \gEDeg(\Sigma_{(2,n_2)})=8n_2^2+4n_2-1\quad\forall\,n_2\ge 2\,.
\end{equation}
We also note that the diagonal of Table \ref{tab: generic ED degree Segre matrices} coincides with the integer sequence in \cite[\href{http://oeis.org/A231482}{A231482}]{oeis}, in the context of computing the number of nonlinear normal modes for a fully resonant Hamiltonian system, see \cite{vanstraten1989note}.

The following general conjecture motivates all the research done in this work.

\begin{conjecture}\label{conj: main}
Consider the Segre-Veronese variety $\sV_{\bd,\bn}\subset\PP(S^\bd V)$. Let $Q_F$ be a Frobenius inner product on $S^\bd V^\mR$. For any positive definite symmetric bilinear form $Q$ on $S^\bd V^\mR$, we have
\[
\EDeg_Q(\sV_{\bd,\bn})\ge\EDeg_{Q_F}(\sV_{\bd,\bn})\,,
\]
namely the maximum defect of ED degree of $\sV_{\bd,\bn}$ is given by the difference
\[
\gEDeg(\sV_{\bd,\bn})-\EDeg_{Q_F}(\sV_{\bd,\bn})\,.
\]
\end{conjecture}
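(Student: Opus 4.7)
The plan is to reduce Conjecture~\ref{conj: main} to a Morse-theoretic lower bound on the number of real critical points of the $Q$-distance function, in the spirit of the matrix case announced around Theorem~\ref{thm: lower bound Q-distance function}. Since $\EDeg_Q(\sV_{\bd,\bn})$ counts all complex critical points for generic data $u\in S^\bd V^\mR$, it dominates the number of real critical points of $\dist^Q_{\sV_{\bd,\bn}^\mR,u}$, and by Lemma~\ref{lem: generic=Morse} this function is Morse for generic $u$. Consequently, if the sum of Betti numbers of a suitable compact model $\sM_Q$ of the real rank-one locus matches $\EDeg_{Q_F}(\sV_{\bd,\bn})$, the weak Morse inequalities in Theorem~\ref{thm: strong Morse} will yield the conjecture.

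First I would pass from the noncompact affine cone to a compact manifold by fixing the $Q$-norm of the rank-one tensor $x_1^{d_1}\otimes\cdots\otimes x_k^{d_k}$ to a generic positive value. This carves out a compact manifold $\sM_Q$ that is a finite quotient of a product of $Q_i$-spheres in the factor vector spaces by the subtorus of rescalings preserving the rank-one tensor. Because the cone of positive definite forms is convex and connected, all $\sM_Q$ are mutually diffeomorphic, so $\beta(\bd,\bn)\coloneqq\sum_k b_k(\sM_Q)$ depends only on $(\bd,\bn)$. The unconstrained distance function descends to a Morse function on $\sM_Q$ whose critical points are in bijection with the critical points at the chosen scale, hence their number is at least $\beta(\bd,\bn)$ uniformly in $Q$.

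The crux is then verifying the identity $\beta(\bd,\bn)=\EDeg_{Q_F}(\sV_{\bd,\bn})$, where the right-hand side is the Friedland-Ottaviani expression of Theorem~\ref{thm: FO formula}. For matrices ($k=2$, $\bd=(1,1)$) the compact model is a quotient of $S^{n_1}\times S^{n_2}$ whose cohomology yields total Betti number $\min(n_1,n_2)+1$, matching the Eckart-Young count. For symmetric tensors one can invoke the spectral theorem to produce a data tensor whose $Q_F$-critical points are all real, forcing equality. In general this identification is the main obstacle: the Friedland-Ottaviani formula is a Chern-class intersection on a complex variety and is not manifestly a sum of real Betti numbers. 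A natural attack is a Leray-Hirsch argument on the projective bundle structure that $\sM_Q$ inherits over the product of projective factors, combined with an analysis of the coefficients $p_i(u,Q)$ of the extended ED polynomial from Definition~\ref{def: extended ED polynomial} to certify that for some positive definite configuration the Morse bound is attained and not exceeded by hidden complex-conjugate pairs. Without such a certification, the Morse inequalities alone could leave a gap between $\EDeg_Q$ and $\EDeg_{Q_F}$, which is precisely the difficulty that confines the authors' unconditional results to the matrix and low-order symmetric cases.
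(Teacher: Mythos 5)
The statement you are addressing is an open \emph{conjecture} in the paper; the authors explicitly state that they cannot prove it in general and only settle it for matrices (Theorem~\ref{thm: lower bound Q-distance function}), symmetric matrices (Theorem~\ref{thm: symmetric_matrices}), the third Veronese surface (Corollary~\ref{cor:3-rd Veronese}), rational normal curves, and quadrics, together with a local version (Theorem~\ref{thm: local minimality Frobenius ED degree}). So there is no full paper proof to compare against, and your honest concluding caveat is the right call. That said, your outline misidentifies the mechanism that makes the settled cases work, and the identity on which you base your ``crux'' is false.

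Your plan rests on the claim that a compact model $\sM_Q$ of the real rank-one locus has total Betti number $\beta(\bd,\bn)=\EDeg_{Q_F}(\sV_{\bd,\bn})$, after which the weak Morse inequalities would finish the argument. This fails already for $\Sigma_2=\Sigma_{(1,1)}$: the Frobenius ED degree is $2$, but the model — the set of unit-$q$-norm rank-one tensors in $\R^2\otimes\R^2$, a free $\Z/2$-quotient of $S^1\times S^1$ — is a torus of total Betti number $4$; for $\Sigma_{(n_1,n_2)}$ with $n_1\le n_2$, the $\Z/2$-Betti sum of the quotient model is $2(n_1+1)$, not $n_1+1$. What Theorem~\ref{thm: lower bound Q-distance function} actually does is work on the \emph{un}quotiented double cover $E_Q\cong\bS^{Q_1}\times\bS^{Q_2}$ and apply the \emph{strong} Morse inequalities of Theorem~\ref{thm: strong Morse}, exploiting two symmetries of $F_u=Q(u,\cdot\otimes\cdot)$: the antipodal symmetry $(x,y)\mapsto(-x,-y)$ forces each $m_k$ to be even, and the sign flip $(x,y)\mapsto(-x,y)$ forces $m_k=m_{n_1+n_2-k}$. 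Bootstrapping the strong inequalities with these parity constraints yields $\sum_k m_k\ge 4(\min(n_1,n_2)+1)$; the weak inequalities alone give only $\sum_k m_k\ge 4$, which is far too weak, and there is no Betti-number identity to salvage. The symmetric case (Theorem~\ref{thm: symmetric_matrices}) follows the same pattern on $\bS^Q$ with the single antipodal symmetry.

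Consequently, the obstruction to a general Morse-theoretic proof is not a missing Betti-number identity but the fact that, for $k\ge 3$ or $d_i\ge 2$, the compact model is no longer a (product of) sphere(s), the relevant symmetry group is a larger torus acting nontrivially on the critical set, and the parity arguments that saturate the strong Morse inequalities in the matrix case do not extend (indeed $\EDeg_{Q_F}(\Sigma_k)=k!$ grows much faster than any fixed-coefficient Betti sum of the $k$-torus model). The paper's own suggested route to the general conjecture, Remark~\ref{rem: approach}, abandons Morse theory altogether: one would stratify $S^2_+(S^\bd V^\mR)$ by intersection type of $\sQ$ with $\sV_{\bd,\bn}$, show the ED degree map is constant on strata, and then show $Q_F$ lies in the closure of every stratum, at which point the lower semicontinuity established in Theorem~\ref{thm: ED degree map is locally constant} gives the result. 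Your appeal to Leray--Hirsch and to the coefficients $p_i(u,Q)$ of the extended ED polynomial gestures at relevant tools, but as stated it does not close the gap and does not match the paper's argument in the cases it does prove.
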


Even though we do not have a proof of Conjecture \ref{conj: main} in general, in the next section we settle it in the case of matrices, that is, tensors of order two.
Specifically, in Theorems \ref{thm: lower bound Q-distance function} and \ref{thm: symmetric_matrices} we show that, given any inner product, the associated distance function from a generic real (symmetric) matrix to the manifold of rank-1 real (symmetric) matrices has at least as many critical points as the distance function associated to the Frobenius inner product.

In Section \ref{sec: ED defects} we perform a detailed study of ED defects of various varieties, with a particular focus on special Segre-Veronese varieties.
The results stated there are of independent interest, and they give further evidence that Conjecture \ref{conj: main} holds, see Corollaries \ref{cor: minimum ED degree rational curve} and \ref{cor:3-rd Veronese}.

In Remark \ref{rem: approach} below we outline a possible approach for attacking Conjecture \ref{conj: main}. We plan to investigate it in forthcoming works.
In the rest of this section, we show a ``local'' version of Conjecture \ref{conj: main}, which follows from a more general property of ED degrees and extended ED polynomials of algebraic varieties.

Let $\sX\subset\PP^N$ be a projective variety. Recall that $\sU\subset S^2V$ denotes the open dense subsets of nondegenerate symmetric bilinear forms $Q$ on $V$, see Remark \ref{rmk: nondegenerate bilinear forms}. Motivated by Conjecture \ref{conj: main}, it is natural to consider the integer-valued {\em ED degree map}
\begin{equation}\label{eq: ED degree map}
 \Phi_\sX\colon \sU\to\Z_{\ge 0}\,,\quad \Phi_\sX(Q)\coloneqq\EDeg_Q(\sX)\,.
\end{equation}
When $\sX$ is a Segre-Veronese variety $\sV_{\bd,\bn}$, Conjecture \ref{conj: main} is equivalent to
\[
Q_F \in \mathrm{argmin}_{Q\in S^2_+(S^\bd V^\mR)}\Phi_{\sV_{\bd,\bn}}(Q).
\]
\begin{remark}\label{rmk: properties ED degree map}
More in general, for a real projective variety $\sX\subset\PP^N$, it is interesting to study the minimum value $m_\sX$ of the ED degree map $\Phi_\sX$, when its domain is restricted to the convex cone $S_+^2V^\mR$ of inner products in $V^\mR$, and the subset of $S_+^2V^\mR$ where $m_\sX$ is attained. In the other direction, we know that the maximum $M_\sX$ of $\Phi_\sX$ is equal to $\gEDeg(\sX)$. Furthermore, it would be interesting to study the image $\Phi_{\sX}(S^2 V^{\R})$, a subset of the interval $[m_\sX,M_\sX]$. In particular, one might ask if there exists an integer $\alpha\in[m_\sX,M_\sX]\setminus\Phi_{\sX}(S^2 V^{\R})$, namely if $\Phi_{\sX}(S^2 V^{\R})$ contains some gaps. In Section \ref{sec: ED defects} we consider this problem for quadric hypersurfaces and rational normal curves, see respectively Corollaries \ref{cor: minimum ED degree quadric} and \ref{cor: minimum ED degree rational curve}.
\end{remark}

We show an important property of the map $\Phi_\sX$.

\begin{theorem}\label{thm: ED degree map is locally constant}
Let $\sX\subset\PP^N$ be a projective variety. The map $\Phi_\sX$ is lower semicontinuous.
\end{theorem}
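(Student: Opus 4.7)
The plan is to exploit the extended ED polynomial $\mathrm{EDpoly}_{\sX}(u,Q,\varepsilon^2)$ introduced in Definition \ref{def: extended ED polynomial}, writing it as in \eqref{eq: write extended ED polynomial explicitly} with $D=\gEDeg(\sX)$. The crucial input from Proposition \ref{prop: epsilon-degree ED polynomial} is that, for a fixed nondegenerate $\widetilde{Q}$, the specialization $\mathrm{EDpoly}_{\sX}(u,\widetilde{Q},\varepsilon^2)$ has $\varepsilon^2$-degree exactly $\EDeg_{\widetilde{Q}}(\sX)$, and its leading coefficient $p_{\EDeg_{\widetilde{Q}}(\sX)}(u,\widetilde{Q})$ is a nonzero element of $\C[u]$. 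Combining these two facts, one obtains the compact characterization
\begin{equation*}
\Phi_{\sX}(Q)=\EDeg_Q(\sX)=\max\bigl\{\,i\in\{0,\ldots,D\}\ \bigm|\ p_i(\,\cdot\,,Q)\not\equiv 0\text{ in }\C[u]\,\bigr\}.
\end{equation*}

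With this characterization in hand, lower semicontinuity is essentially a routine Zariski-openness argument. First I would fix an arbitrary $Q_0\in\sU$ and set $r:=\Phi_{\sX}(Q_0)$. By the display above, the polynomial $p_r(u,Q_0)\in\C[u]$ is not identically zero, so one can choose a point $u_0\in V$ with $p_r(u_0,Q_0)\neq 0$. Next I would consider the polynomial function
\begin{equation*}
\sU\longrightarrow\C,\qquad Q\longmapsto p_r(u_0,Q),
\end{equation*}
which is continuous (indeed, a polynomial in the entries of $M_Q$). Consequently its nonvanishing locus is an open neighborhood $U\subset\sU$ of $Q_0$ on which $p_r(u_0,Q)\neq 0$; in particular, $p_r(\,\cdot\,,Q)\not\equiv 0$ for every $Q\in U$. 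Applying the characterization once more gives $\Phi_{\sX}(Q)\geq r=\Phi_{\sX}(Q_0)$ for all $Q\in U$, proving lower semicontinuity at $Q_0$. Since $Q_0$ was arbitrary, we are done.

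The only delicate point, and the one I would check carefully before writing the argument, is that the specialization $\mathrm{EDpoly}_{\sX}(u,Q,\varepsilon^2)|_{Q=Q_0}$ really coincides with the ED polynomial of $\sX$ with respect to $Q_0$ in the sense of \cite{ottaviani2020distance}, so that Proposition \ref{prop: epsilon-degree ED polynomial} legitimately identifies the $\varepsilon^2$-degree of the specialization with $\EDeg_{Q_0}(\sX)$. This requires knowing that the projection $\sE(\sX)\to V\times S^2V$ from the extended ED correspondence \eqref{eq: extended ED correspondence any X} has the expected generic fiber structure over all of $\sU$, not merely over the generic point; this is where the proof is genuinely using the global geometry of the extended ED correspondence rather than a fiberwise construction. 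Once this identification is in place, the rest is the polynomial-continuity argument sketched above, and no further machinery (Morse theory, Chern classes, Whitney stratifications) is needed.
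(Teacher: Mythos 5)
Your proof is correct and follows essentially the same route as the paper: both rely on the characterization of $\EDeg_Q(\sX)$ as the $\varepsilon^2$-degree of the specialized extended ED polynomial (Proposition \ref{prop: epsilon-degree ED polynomial}) and then deduce lower semicontinuity from the fact that the coefficients $p_i(u,Q)$ are polynomials in $Q$, whose nonvanishing loci are open. The paper packages the openness step via the incidence set $\Psi_\sX$ of \eqref{eq: identity epsilon degree and ED degree} while you instead fix a witness $u_0$ with $p_r(u_0,Q_0)\neq 0$ and consider the polynomial $Q\mapsto p_r(u_0,Q)$; these are the same argument.
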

\begin{proof}
We prove the theorem using the theory of extended ED polynomials explained in Section \ref{subsec: ED degree ED polynomial}.
Recall that the extended ED polynomial $\mathrm{EDpoly}_{\sX}(u,Q,\varepsilon^2)$ is an element of $\C[u,Q][\varepsilon]$. By Proposition \ref{pro: roots ED polynomial}, its roots $\varepsilon^2$ are precisely of the form $\varepsilon^2=q(u-x)$ for some $u\in V$ and $x\in C\sX$ that is critical for the function $x\mapsto q(u-x)$ on $C\sX$.
Furthermore, for a fixed pair $(\widetilde{u},\widetilde{Q})$ where $\widetilde{Q}\in S^2V$ is nondegenerate and $\widetilde{u}\notin\V(p_{\EDeg_{\widetilde{Q}}(\sX)})$, Proposition \ref{prop: epsilon-degree ED polynomial} implies that
\[
\deg_{\varepsilon^2}\left(\mathrm{EDpoly}_{\sX}(\widetilde{u},\widetilde{Q},\varepsilon^2)\right)=\EDeg_{\widetilde{Q}}(\sX)\,.
\]
Consider the set
\begin{equation}\label{eq: identity epsilon degree and ED degree}
 \Psi_\sX \coloneqq \{(u,Q)\in V\times S^2V \mid\text{either $Q$ is degenerate or $u\in \V(p_{\EDeg_{Q}(\sX)})$}\}\,,
\end{equation}
that can be naturally regarded as a variety in $V\times S^2V$.
Given $(u,Q)\notin \Psi_\sX$, we also have that $(u,Q')\notin \Psi_\sX$ for all $Q'\in S^2V$ that are close enough to $Q$.
Therefore, the $\varepsilon^2$-degree of $\mathrm{EDpoly}_{\sX}(u,Q',\varepsilon^2)$ cannot become smaller than the $\varepsilon^2$-degree of $\mathrm{EDpoly}_{\sX}(u,Q,\varepsilon^2)$, as all coefficients $p_i$ of the extended ED polynomial are polynomials in $u$ and $Q$.
It follows from the identity \eqref{eq: identity epsilon degree and ED degree} that $\EDeg_{Q'}(\sX)\ge\EDeg_{Q}(\sX)$ if $Q'\in S^2V$ is sufficiently close to $Q$. This shows that the ED degree map $\Phi_\sX$ is lower-semicontinuous.
\end{proof}

An immediate consequence of Theorem \ref{thm: ED degree map is locally constant} is the following ``local'' version of Conjecture \ref{conj: main}.

\begin{theorem}\label{thm: local minimality Frobenius ED degree}
Let $Q_F\in S^2_+(S^{\bd} V^{\mR})$ be a Frobenius inner product and consider an inner product $Q\in S^2_+(S^{\bd} V^{\mR})$ in a sufficiently small neighborhood of $Q_F$. Then
\[
\EDeg_Q(\sV_{\bd,\bn}) \ge \EDeg_{Q_F}(\sV_{\bd,\bn})\,.
\]
In other words, any Frobenius inner product $Q_F$ is a local minimum of the ED degree map \eqref{eq: ED degree map}.
\end{theorem}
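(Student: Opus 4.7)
The plan is to derive Theorem \ref{thm: local minimality Frobenius ED degree} as an essentially immediate corollary of Theorem \ref{thm: ED degree map is locally constant}, which has already established the lower semicontinuity of the ED degree map $\Phi_\sX\colon\sU\to\Z_{\ge 0}$ for any projective variety $\sX$. The strategy is simply to specialize the lower-semicontinuity property to $\sX=\sV_{\bd,\bn}$ at the point $Q_F\in\sU$, and then restrict the domain to inner products.

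The key observation driving the reduction is that $\Phi_\sX$ takes values in the discrete space $\Z_{\ge 0}$. For an integer-valued function on a topological space, lower semicontinuity at a point $Q_0$ is equivalent to the existence of an open neighborhood $U\ni Q_0$ on which $\Phi_\sX(Q)\ge \Phi_\sX(Q_0)$ for every $Q\in U$. Indeed, the preimage $\Phi_\sX^{-1}(\{0,1,\ldots,\Phi_\sX(Q_0)-1\})$ is closed in $\sU$ by lower semicontinuity, it does not contain $Q_0$, hence its complement is an open neighborhood of $Q_0$ on which $\Phi_\sX\ge \Phi_\sX(Q_0)$. Applying this to $Q_0=Q_F$ and $\sX=\sV_{\bd,\bn}$ yields an open neighborhood $U\subset\sU$ of $Q_F$ with $\EDeg_Q(\sV_{\bd,\bn})\ge \EDeg_{Q_F}(\sV_{\bd,\bn})$ for all $Q\in U$.

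The final step is to intersect $U$ with the open convex cone $S^2_+(S^{\bd}V^{\mR})$ of inner products, which is an open subset of the real ambient space $S^2(S^{\bd}V^{\mR})$ containing $Q_F$; the intersection $U\cap S^2_+(S^{\bd}V^{\mR})$ is an open neighborhood of $Q_F$ among inner products on which the claimed inequality holds, establishing the theorem. There is no genuine obstacle here, as the analytical content has already been discharged in the proof of Theorem \ref{thm: ED degree map is locally constant} via the polynomial coefficients $p_i(u,Q)$ of the extended ED polynomial. The present statement is merely the unpacking of the general semicontinuity result in the specific setting of Segre-Veronese varieties, phrased so as to emphasize the local minimality of $Q_F$ relative to Conjecture \ref{conj: main}.
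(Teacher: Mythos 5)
Your proposal is correct and matches the paper exactly: the paper also presents Theorem \ref{thm: local minimality Frobenius ED degree} as an immediate consequence of Theorem \ref{thm: ED degree map is locally constant} with no separate proof, relying on precisely the standard fact that a lower semicontinuous integer-valued function on a topological space is bounded below by its value at a point on some neighborhood of that point. Your explicit unpacking of the sublevel-set argument and the restriction to the open cone $S^2_+(S^{\bd}V^{\mR})$ is a faithful elaboration of what the paper leaves implicit.
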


\begin{remark}\label{rem: approach}
In light of the above discussion, a possible approach for proving our Conjecture \ref{conj: main} is as follows.
One can stratify the set of all $Q\in S^2_+(S^{\bd } V^{\mR})$ according to the associated position of the respective isotropic quadric $\sQ\subset \PP(S^{\bd} V)$ with respect to $\sV_{\bd,\bn}$.
We expect that the ED degree map $\Phi_{\sV_{\bd,\bn }}(Q)$ is constant along each stratum.
Because both the Frobenius inner product $Q_F$ and $\sV_{\bd,\bn}$ are invariant under the standard action of the product $\prod_{i\in [k]} O(V_i, Q_i)$ of orthogonal groups on $S^{\bd } V$, the intersection of the isotropic quadric $\sQ_F$ with $\sV_{\bd,\bn}$ is highly non-transversal.
This suggests that $Q_F$ lies in a low-dimensional stratum of the above stratification.
If one shows that $Q_F$ belongs to the closure (with respect to the Euclidean topology on $S^2_+(S^{\bd } V^{\mR})$) of any stratum, Conjecture \ref{conj: main} would follow from its ``local'' version, that is, Theorem \ref{thm: local minimality Frobenius ED degree}.
\end{remark}

\section{Minimum ED degrees for varieties of rank-one matrices}\label{sec: minimum ED degree Segre-Veronese}

This section aims to prove Conjecture \ref{conj: main} in the case of matrices (bilinear forms) and symmetric matrices using Morse theory. The next result implies Conjecture \ref{conj: main} for $k=2$ and $\bd=(1,1)$.

\begin{theorem}\label{thm: lower bound Q-distance function}
Consider the cone $C\Sigma_\bn\subseteq V=V_1\otimes V_2$ over the Segre variety $\Sigma_\bn\subset\PP(V)$ of rank-1 matrices.
Given any inner product $Q\in S^2V^\mR$ and the associated quadratic form $q\colon V^\mR\to\R$, the distance function
\begin{equation}\label{eq: distance function matrices}
\dist^Q_{C\Sigma_\bn^\mR,u}\colon C\Sigma_\bn^\mR \to\R\,,\quad x\otimes y \mapsto q(u-x\otimes y),
\end{equation}
from a generic matrix $u \in V^\mR$ to the real cone $C\Sigma_\bn^\mR$ has at least $\min(n_1,n_2)+1$ real critical points.
In particular, if $Q_F$ is a Frobenius inner product on $V^\mR$, we have
\begin{equation}\label{eq: lower bound ED degree matrices}
\EDeg_Q(\Sigma_\bn)\ge \EDeg_{Q_F}(\Sigma_\bn)=\min(n_1,n_2)+1\,.
\end{equation}
\end{theorem}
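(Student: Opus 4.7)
The plan is to reduce the Morse-theoretic counting on the noncompact cone $C\Sigma_\bn^\mR$ to one on the compact manifold $\PP^{\min(n_1,n_2)}_\mR$, whose total mod-$2$ Betti number equals $\min(n_1,n_2)+1$. Assume without loss of generality $n_1\le n_2$. For each nonzero $x\in V_1^\mR$ the bilinear form $B_x(y,z):=Q(x\otimes y,x\otimes z)$ is positive definite on $V_2^\mR$, hence the least-squares problem $\min_{y\in V_2^\mR}q(u-x\otimes y)$ has a unique solution $y^*(x)$ characterized by $B_x(y^*(x),\cdot)=Q(u,x\otimes\cdot)$. Since $y^*(\lambda x)=\lambda^{-1}y^*(x)$, the rank-one matrix $x\otimes y^*(x)$ depends only on $[x]\in\PP^{n_1}_\mR$, so one obtains a smooth function
\[
h_u\colon\PP^{n_1}_\mR\to\R,\qquad h_u([x]):=q\bigl(u-x\otimes y^*(x)\bigr).
\]

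The first step would be to verify that $[x]\mapsto x\otimes y^*(x)$ is a bijection between the critical points of $h_u$ and the nonzero smooth critical points of $\dist^Q_{C\Sigma_\bn^\mR,u}$. By the envelope theorem $\dd h_u([x])(x')=-2Q(u-x\otimes y^*(x),x'\otimes y^*(x))$, which together with the defining equation of $y^*$ recovers exactly the orthogonality condition $u-x\otimes y^*(x)\in N_{x\otimes y^*(x)}C\Sigma_\bn$; conversely every nonzero critical rank-one matrix $x\otimes y$ must satisfy $y=y^*(x)$, and injectivity is the observation that the $\R^*$-scaling ambiguity of the Segre parameterization is precisely what is killed on passing to $\PP^{n_1}_\mR$.

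Next I would invoke Lemma~\ref{lem: generic=Morse} on the smooth cone $(C\Sigma_\bn^\mR)_\mathrm{sm}=C\Sigma_\bn^\mR\setminus\{0\}$ to obtain that $\dist^Q_{C\Sigma_\bn^\mR,u}$ is Morse for generic $u\in V^\mR$. Because the block $\partial^2_{yy}q(u-x\otimes y)|_{y=y^*(x)}=2B_x$ is positive definite, a Schur-complement calculation identifies the Hessian of $h_u$ at $[x]$ with the Schur complement of this block in the ambient Hessian of $(x,y)\mapsto q(u-x\otimes y)$ on $V_1^\mR\times V_2^\mR$, and both nondegeneracy statements (Morse on the cone, Morse on $\PP^{n_1}_\mR$) reduce to the ambient Hessian having one-dimensional kernel, spanned by the scaling orbit direction $(x,-y^*(x))$. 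Hence $h_u$ is Morse on $\PP^{n_1}_\mR$ for generic $u$, and the weak Morse inequalities with $\Z/2\Z$-coefficients yield
\[
\#\{\text{crit. pts of }h_u\}\ \ge\ \sum_k b_k(\PP^{n_1}_\mR;\Z/2\Z)=n_1+1=\min(n_1,n_2)+1.
\]
Transporting these back via the bijection produces $\min(n_1,n_2)+1$ distinct real nonzero critical rank-one approximations, which sit inside the finite complex scheme counted by $\EDeg_Q(\Sigma_\bn)$, establishing \eqref{eq: lower bound ED degree matrices}.

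The main obstacle is the Schur-complement transfer itself: since $\dim(C\Sigma_\bn^\mR\setminus\{0\})=n_1+n_2+1$ while $\dim\PP^{n_1}_\mR=n_1$, one must simultaneously account for the $y$-fiber direction (killed by the partial minimization) and the scaling direction (killed by projectivization), and show that the only kernel vector of the full ambient Hessian at a critical point is the scaling orbit $(x,-y^*(x))$. Once this identification is made cleanly using the positive definiteness of $B_x$ and the smooth dependence of $y^*$ on $x$, the remaining steps are routine transversality and topology.
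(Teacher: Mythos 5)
Your argument is correct and takes a genuinely different route from the paper's proof. The paper trivializes the sphere bundle $E_Q\to\bS^{Q_1}$ to identify $E_Q$ with $\bS^{n_1}\times\bS^{n_2}$, reduces the count to that of critical points of $F_u(x,y)=Q(u,x\otimes y)$ on $E_Q$, and then applies the \emph{strong} Morse inequalities over $\Q$ combined with the $(\Z/2)^2$-symmetry $(x,y)\mapsto(\pm x,\pm y)$ (an orbit of size~$4$ gives one critical rank-one matrix) to force $\sum_k m_k\ge 4(\min(n_1,n_2)+1)$; this requires delicate parity bookkeeping and a case split $n_1=n_2$ versus $n_1<n_2$. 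You instead integrate out the $y$-variable using the fiberwise strict convexity supplied by positive definiteness of $B_x$, descend to a Morse function $h_u$ on $\PP^{n_1}_\R$, and read off $n_1+1$ from the total $\Z/2$-Betti number of real projective space via the \emph{weak} Morse inequalities. That is genuinely cleaner: all the symmetry and parity arguments disappear because $\PP^{n_1}_\R$ already kills the $\R^*$-scaling ambiguity, and the $y$-fiber is collapsed by the unique minimizer $y^*(x)$. Your Schur-complement transfer is also correct: writing the ambient Hessian of $(x,y)\mapsto q(u-x\otimes y)$ in block form $\bigl(\begin{smallmatrix}A&B\\B^\mT&C\end{smallmatrix}\bigr)$ with $C=2B_x$ positive definite, the map $v\mapsto(v,-C^{-1}B^\mT v)$ is an isomorphism from $\ker(A-BC^{-1}B^\mT)$ onto $\ker$ of the full Hessian; the scaling direction $(x,-y^*(x))$ always lies in the latter (since $\dd y^*(x)\,x=-y^*(x)$ by homogeneity), so $x$ always lies in the kernel of the Schur complement, and it is exactly this direction that projectivizing $x$ removes. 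Hence $h_u$ is Morse at $[x]$ if and only if $\dist^Q_{C\Sigma^\mR_\bn\setminus\{0\},u}$ is Morse at $x\otimes y^*(x)$, which by Lemma~\ref{lem: generic=Morse} holds for generic $u$.

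One point you should state explicitly: the bijection and the count require $y^*(x)\neq 0$ for every nonzero $x\in V_1^\mR$, otherwise a critical point of $h_u$ is sent to the cone vertex rather than a smooth critical point of the distance function. This is equivalent to injectivity of the contraction $V_1^\mR\to(V_2^\mR)^*$, $x\mapsto Q(u,x\otimes\cdot)$, which since $n_1\le n_2$ fails only on a proper algebraic subset of $V^\mR$ and so holds for generic $u$; but it is an additional genericity condition beyond what Lemma~\ref{lem: generic=Morse} gives, and should be recorded. (The paper's own proof has a comparable implicit assumption, namely that $Q(u,x\otimes y)\neq 0$ at critical points, so this is not a mark against your approach.)
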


%\begin{proof}
%Without loss of generality, we assume $n_1\le n_2$.
%By Lemma \ref{lem: generic=Morse}, the function \ref{eq: distance function matrices} restricted to the manifold $M := C\Sigma_\bn^\mR\setminus \{\mathbf{0}\}$ is Morse, as $u\in V^\mR$ is generic.
%By the weak Morse inequalities \ref{eq: inequalities Betti}, the total number of critical points of this function is bounded from below by $\sum_{i=1}^{\dim M} b_i(M)$.
%\end{proof}

\begin{proof}
Without loss of generality, we assume $n_1\le n_2$. Suppose that the inner product $Q_F$ is induced by the inner products $Q_1$, $Q_2$ on the factors $V_1^\mR$, $V_2^\mR$, respectively. The associated quadratic forms are denoted with $q_1$ and $q_2$.
For $i\in\{1,2\}$, let $\bS^{Q_i}=\{x\in V_i^\mR\mid q_i(x)=1\}$ denote the unit sphere in the Euclidean space $(V_i^\mR, Q_i)$ and let $\bS^Q=\{u\in V_1^\mR\otimes V_2^\mR\mid q(u)=1\}$ be the unit sphere in the inner product space $(V^\mR,Q)$.
Recall that the elements of $C\Sigma_\bn^\mR$ are rank-one matrices $x\otimes y$ for some nonzero $x\in V_1^\mR$ and $y\in V_2^\mR$.
For any rank-one matrix $x\otimes y\in \bS^Q$, we can always assume that $x\in \bS^{Q_1}$: if $q_1(x)\neq 1$, then the assumption $q(x\otimes y)=1$ allows us to say that $q_1(x)\neq 0$, hence we can replace $x$ with $\tilde{x}=x/\sqrt{q_1(x)}$ and $y$ with $\tilde{y}=\sqrt{q_1(x)}\,y$, so that $\tilde{x}\otimes\tilde{y}=x\otimes y$ and $\tilde{x}\in\bS^{Q_1}$. Therefore, the $q_2$-length of $y\in V_2^\mR$ is determined uniquely. Thus, given $x\in\bS^{Q_1}$, we define 
\[
\bS_x^{Q_2}\coloneqq\left\{y\in V_2^\mR\mid x\otimes y\in\bS^Q \right\}.
\]
Note that $\bS_x^{Q_2}\subset V_2^\mR$ is a quadratic hypersurface homeomorphic to the sphere $\bS^{Q_2}$.
Consider now the sphere bundle
\begin{equation}\label{eq: sphere bundle}
E_Q\coloneqq\left\{(x,y)\in V_1^\mR\times V_2^\mR\mid\text{$x\in \bS^{Q_1}$ and $y\in \bS_x^{Q_2}$}\right\}\longrightarrow\bS^{Q_1}
\end{equation}
whose fiber over $x\in \bS^{Q_1}$ is $\bS_x^{Q_2}$.
We now argue that $E_Q\to\bS^{Q_1}$ is trivial, that is, there exists a homeomorphism between $E_Q$ and the product of spheres $\bS^{Q_1}\times \bS^{Q_2}$.
In order to construct it, we fix the basis $\sB=\{e_i\otimes e_j\mid\text{$i\in\{0,\ldots, n_1\}$ and $j\in\{0,\ldots,n_2\}$}\}$ of $V^\mR$, whose elements $e_i\otimes e_j$ we order lexicographically.
We call $\mathrm{vec}$ the map that to a matrix $u=\sum_{i=0}^{n_1}\sum_{j=0}^{n_2} u_{ij}e_i\otimes e_j\in V^\mR$ associates the column-vector $\mathrm{vec}(u)=(u_{00},\ldots, u_{0n_2},u_{10},\ldots,u_{1n_2},\ldots,u_{n_10},\ldots,u_{n_1n_2})^\mT$ of its coordinates in the basis $\sB$.
Let $M_Q$ be the positive definite symmetric matrix of $Q$ in the basis $\sB$. We can write $M_Q=N_Q^\mT N_Q$, where the rows of $N_Q$ are vectorizations of some $(n_1+1)\times (n_2+1)$-matrices $N_{ij}$ for all $i\in\{0,\ldots, n_1\}$ and $j\in\{0,\ldots, n_2\}$.
So, the $Q$-length of $x\otimes y$ satisfies
\begin{align}\label{eq: q(x tensor y)}
\begin{split}
q(x\otimes y) &= \mathrm{vec}(x\otimes y)^\mT M_Q\, \mathrm{vec}(x\otimes y) = \mathrm{vec}(x\otimes y)^\mT N_Q^\mT N_Q\, \mathrm{vec}(x\otimes y)\\
&= \sum_{i=0}^{n_1}\sum_{j=0}^{n_2} (x^\mT N_{ij}\,y)^2 = \sum_{i=0}^{n_1}\sum_{j=0}^{n_2} y^\mT N_{ij}^\mT\,x\,x^T N_{ij}\,y = y^\mT\left(\sum_{i=0}^{n_1}\sum_{j=0}^{n_2} N_{ij}^\mT xx^\mT N_{ij}\right) y\,.
\end{split}
\end{align}
Let $P(x)\coloneqq\sum_{i=0}^{n_1}\sum_{j=0}^{n_2} N_{ij}^\mT xx^\mT N_{ij}$ be the symmetric $(n_2+1)\times (n_2+1)$ matrix appearing in brackets after the last equality in \eqref{eq: q(x tensor y)}. Its entries depend quadratically on $x\in V_1^\mR$. Furthermore, since $q(x\otimes y)=y^\mT P(x)\,y$, the matrix $P(x)$ is positive definite for any $x\in\bS^{Q_1}$.
Let $P(x)=L(x)^\mT L(x)$ be the Cholesky decomposition of $P(x)$; the Cholesky factor $L(x)$ is a unique upper triangular matrix with positive diagonal entries. Furthermore, the dependence of $L(x)$ on $x\in\bS^{Q_1}$ is continuous by \cite[Lemma $12.1.6$]{Schatzman}. All in all, we define a continuous bundle map
\begin{equation}\label{eq: def bundle map phi}
\varphi\colon E_Q\to\bS^{Q_1}\times\bS^{Q_2}\,,\quad(x,y)\mapsto(x,L(x)y)\,.
\end{equation}
The matrix $L(x)$ establishes a linear bijection between fibers $\bS_x^{Q_2}$ and $\bS^{Q_2}$ and $\varphi$ is a homeomorphism that trivializes the bundle \eqref{eq: sphere bundle}.
By K\"unneth formula \cite{Kuenneth}, the Betti numbers of $E_Q$ are
\begin{equation}\label{eq: Betti E_Q}
b_k(E_Q)=
\begin{cases}
1 & \text{for $k\in\{0,n_1+n_2\}$}\\
1 & \text{for $k\in\{n_1,n_2\}$ if $n_1\neq n_2$}\\
2 & \textrm{for $k=n_1=n_2$ if $n_1=n_2$}\\
0 & \text{otherwise.}
\end{cases}
\end{equation}
Recall that any real rank-one tensor on $C\Sigma_\bn^\mR$ can be written as $\lambda\,x\otimes y$, where $\lambda\in\R$ and $(x,y)\in E_Q$. 
By definition, given $u\in V^\mR$, the point $\lambda\,x\otimes y\neq 0$ is an ED critical point of the distance function $\dist^Q_{C\Sigma_\bn^\mR,u}$ if $u-\lambda\,x\otimes y\in N_{\lambda\,x\otimes y}C\Sigma_\bn^\mR$. Since $x\otimes y$ obviously belongs to $T_{\lambda\,x\otimes y}C\Sigma_\bn^\mR$, one must have that 
\begin{align*}
Q(u,x\otimes y)-\lambda\,Q(x\otimes y,x\otimes y)=Q(u,x\otimes y)-\lambda\,q(x\otimes y)=Q(u,x\otimes y)-\lambda=0\,,
\end{align*}
hence the ED critical points relative to $u$ are of the form $Q(u,x\otimes y)\,x\otimes y$.
We now look at other tangent directions to $C\Sigma_\bn^\mR$ at $\lambda\,x\otimes y$.
Observe that $\lambda=Q(u,x\otimes y)\neq 0$.
The total space $E_Q$ of the bundle \eqref{eq: sphere bundle} locally parameterizes the set $\lambda\,\bS^Q\cap C\Sigma_\bn^\mR$ of rank-one tensors of $q$-norm $\lambda$ via the map $\psi_\lambda\colon E_Q\to\lambda\,\bS^Q\cap C\Sigma_\bn^\mR$ defined by $\psi_\lambda(\tilde{x},\tilde{y})\coloneqq\lambda\,\tilde{x}\otimes\tilde{y}$. 
Thus, if $\gamma(t)=(x(t),y(t))$ is a smooth curve in $E_Q$ emanating from $(x,y)=\gamma(0)$, then $\psi_\lambda(\gamma(t))$ is a curve in $\lambda\,\bS^Q\cap C\Sigma_\bn^\mR$ emanating from $\lambda\,x\otimes y=\psi_\lambda(\gamma(0))$. 
In particular, the differential of $\psi_\lambda$ sends the tangent vector $\gamma'(0)=(x'(0),y'(0))\in T_{\gamma(0)}E_Q$ to the tangent vector 
\begin{equation}
\left.\frac{d}{dt}\right|_{t=0}\psi_\lambda(\gamma(t))=\lambda(x'(0)\otimes y+x\otimes y'(0))\in T_{\psi_\lambda(\gamma(0))}(\lambda\,\bS^Q\cap C\Sigma_\bn^\mR)\,.
\end{equation}
Note that $x\otimes y\in N_{\lambda x\otimes y} (\lambda \bS^Q\cap C\Sigma_\bn^\mR)$, since 
the derivative of the identity $Q(\lambda x(t)\otimes y(t), x(t)\otimes y(t))=\lambda$ at $t=0$ reads as $2 Q(\lambda( x'(0)\otimes y(0)+x(0)\otimes y'(0)),x\otimes y)=0$. This implies that $u-\lambda x\otimes y=u-Q(u,x\otimes y) x\otimes y$ belongs to $N_{\lambda\,x\otimes y}(\lambda\bS^Q\cap C\Sigma_\bn^\mR)$ if and only if $u$ does. Hence, the rank-one matrix $Q(u,x\otimes y)\,x\otimes y$ is a (real) ED degree critical point relative to $u$ if and only if $(x,y)\in E_Q$ is a critical point of the bilinear form $F_u(\tilde{x},\tilde{y})=Q(u,\tilde{x}\otimes \tilde{y})$ restricted to $E_Q$. 
Furthermore, $Q(u,x\otimes y)\,x\otimes y\in C\Sigma_\bn^\mR$ is a degenerate critical point of the distance function $\dist^Q_{C\Sigma_\bn^\mR,u}$ if and only if $(x,y)\in E_Q$ is a degenerate critical point of the function $F_u|_{E_Q}$.
Lemma \ref{lem: generic=Morse} then implies that $F_u|_{E_Q}$ is Morse if and only if $u$ is generic.
Under these circumstances, for a given $k\in\{0,1,\ldots,n_1+n_2\}$, let $m_k$ denote the number of critical points of $F_u|_{E_Q}$ of index $k$. Recall that the index of a critical point of a Morse function is the number of negative eigenvalues of the Hessian matrix of the function at this point. Applying Theorem \ref{thm: strong Morse}, we have that
\begin{equation}\label{eq: strong Morse E_Q}
\sum_{k=0}^i (-1)^{i-k} m_k \ge \sum_{k=0}^i (-1)^{i-k} b_k(E_Q)\,,
\end{equation}
where the Betti numbers of $E_Q$ are given by \eqref{eq: Betti E_Q}.
Since $F_u(x,y)=F_u(-x,-y)$, if $(x,y)\in E_Q$ is a critical point of index $k$, then so is $(-x,-y)\in E_Q$. 
Moreover, since $F_u(x,y)=-F_u(-x,y)$, if $(x,y)\in E_Q$ is a critical point of index $k$, then $(-x,y)\in E_Q$ is a critical point of index $n_1+n_2-k$.
In particular, the numbers $m_k$ are all even and $m_k=m_{n_1+n_2-k}$. We have that $m_0\ge b_0=1$, but since $m_0$ is even, then necessarily $m_0\ge 2$.
Next, we have $m_1-m_0\ge b_1-b_0=-1$. Again, by the fact that $m_k$ are even, we have that $m_1-m_0\ge 0$. Summing this inequality with $m_0\ge 2$, we derive that $m_1\ge 2$.
Proceeding in this way, we obtain that for all even $2i<n_1, n_2$ and odd $2j+1<n_1, n_2$,
\begin{align}
\begin{split}
\sum_{k=0}^{2i}(-1)^{2i-k}m_k &\ge \sum_{k=0}^{2i} (-1)^{2i-k} b_k(E_Q) = 1\\
\sum_{k=0}^{2j+1}(-1)^{2j+1-k}m_k &\ge \sum_{k=0}^{2j+1} (-1)^{2j+1-k} b_k(E_Q) = -1\,,
\end{split}
\end{align}
that, together with the evenness of $m_k$, yields 
\begin{equation}\label{eq:localshit}
\sum_{k=0}^{2i}(-1)^{2i-k}m_k\ge 2\,,\quad
\sum_{k=0}^{2j+1}(-1)^{2j+1-k}m_k\ge 0\,.
\end{equation}
Summing these inequalities with $i=j$, we obtain that $m_i\ge 2$ whenever $i<n_1, n_2$.
Now, we have to distinguish the following two cases.

{\it Case 1 ($n_1=n_2=n$)}. 
By \eqref{eq: Betti E_Q} we have that
\begin{align*}
\sum_{k=0}^{n}(-1)^{n-k}m_k \ge \sum_{k=0}^{n}(-1)^{n-k} b_k(E_Q)\ =\ 2+(-1)^n\,.
\end{align*}
Summing this inequality with one of \eqref{eq:localshit} (depending on the parity of $n$), we obtain $m_n\ge 3$. But since $m_k$ is always even, we must have that $m_n\ge 4$.
Since $m_k=m_{2n-k}$, we have that the total number $\sum_{k=0}^{2n}m_k$ of critical points of $F_u|_{E_Q}$ satisfies
\begin{align*}
\sum_{k=0}^{2n}m_k = 2\sum_{k=0}^{n-1} m_k +m_n \ge 4n+4 = 4(n+1)\,.
\end{align*}

{\it Case 2 ($n_1<n_2$)}. Again, \eqref{eq: Betti E_Q} gives the inequality
\begin{align*}
\sum_{k=0}^{n_1} (-1)^{n_1-k} m_k \ge \sum_{k=0}^{n_1}(-1)^{n_1-k} b_k(E_Q) = 1+(-1)^{n_1},
\end{align*}
whose sum with \eqref{eq:localshit} yields that $m_{n_1}\ge 2$.
Since $m_k=m_{n_1+n_2-k}$, we obtain the following bound on the total number of critical points of $F_u|_{E_Q}$:
\begin{align*}
\sum_{k=0}^{n_1+n_2} m_k \ge 2\sum_{k=0}^{n_1} m_k \ge 4(n_1+1)\,.
\end{align*}
We conclude that $F_u|_{E_Q}$ always has at least $4(n_1+1)=4(\min(n_1,n_2)+1)$ critical points. Equivalently, at least $\min(n_1,n_2)+1$ ED critical points relative to $u$ must be real.
\end{proof}

Conjecture \ref{conj: main} also holds for $k=1$ and $\bd=(2)$, that is, in the case of symmetric matrices of any size, as we state next.

\begin{theorem}\label{thm: symmetric_matrices}
Consider the cone $C\sV_{2,n}\subseteq S^2V$ over the Veronese variety $\sV_{2,n}\subset\PP(S^2V)$ of \mbox{rank-1} symmetric matrices.
Given any inner product $Q\in S^2_+(S^2V^\mR)$ and the associated quadratic form $q\colon S^2V^\mR\to\R$, the distance function
\begin{equation}\label{eq: distance function symmetric matrices}
\dist^Q_{C\sV_{2,n}^\mR,u}\colon C\sV_{2,n}^\mR\to\R\,,\quad\pm\,x\otimes x \mapsto q(u\mp x\otimes x)\,,
\end{equation}
from a generic symmetric matrix $u \in S^2V^\mR$ to the real cone $C\sV_{2,n}^\mR$ has at least $n+1$ real critical points.
In particular, if $Q_F$ is a Frobenius inner product on $S^2V^\mR$, we have
\begin{equation}\label{eq: lower bound ED degree symmetric matrices}
\EDeg_Q(\sV_{2,n})\ge \EDeg_{Q_F}(\sV_{2,n})=n+1\,.
\end{equation}
\end{theorem}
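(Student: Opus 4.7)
The strategy is to adapt the Morse-theoretic argument of Theorem~\ref{thm: lower bound Q-distance function} to the symmetric setting, replacing the sphere bundle over $\bS^{Q_1}$ by a single ``quartic sphere'' in $V^\R$ and trading $\Q$-coefficients for $\F_2$-coefficients on $\R\PP^n$. First I would write every nonzero element of $C\sV_{2,n}^\R$ as $\mu\,x\otimes x$ with $\mu\in\R^*$ and $x$ in the hypersurface
\[
\bS_Q\coloneqq\{x\in V^\R\mid q(x\otimes x)=1\}\,.
\]
Since $q$ is positive definite on $S^2 V^\R$, the quartic $h(x)\coloneqq q(x\otimes x)$ is strictly positive away from the origin and satisfies $dh_x(x)=4h(x)>0$, so $\bS_Q$ is a smooth closed hypersurface of $V^\R$. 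The radial map $x\mapsto x/h(x)^{1/4}$ identifies $\bS_Q$ with the Euclidean unit sphere, hence $\bS_Q\cong S^n$. The involution $x\mapsto -x$ preserves $\bS_Q$, and $(\mu,x)\mapsto \mu\,x\otimes x$ becomes a $2$-to-$1$ map onto the smooth locus of $C\sV_{2,n}^\R$ with quotient $\R\PP^n = \bS_Q/\{\pm 1\}$.

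Next I would identify the smooth ED critical points of \eqref{eq: distance function symmetric matrices} with the critical points of the function $F_u\colon\bS_Q\to\R$, $F_u(x)\coloneqq Q(u,x\otimes x)$, taken modulo $\{\pm 1\}$. At a smooth point $\mu\,x\otimes x$ with $\mu\neq 0$, the tangent space to $C\sV_{2,n}^\R$ is spanned by $x\otimes x$ together with the symmetric products $x\odot v\coloneqq\tfrac{1}{2}(x\otimes v+v\otimes x)$ for $v\in V^\R$. The orthogonality of $u-\mu\,x\otimes x$ to $x\otimes x$ forces $\mu=F_u(x)$, while orthogonality to the $x\odot v$'s is exactly the Lagrange-multiplier equation for $F_u|_{\bS_Q}$ at $x$. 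Since $F_u(-x)=F_u(x)$, the function $F_u$ descends to $\R\PP^n$, and the correspondence becomes a bijection between smooth real ED critical points relative to $u$ and critical points of $F_u|_{\R\PP^n}$ at which $F_u\neq 0$.

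For generic $u$, Lemma~\ref{lem: generic=Morse} applied to the submanifold $\sM=C\sV_{2,n}^\R\setminus\{0\}$ guarantees that $\dist^Q_{C\sV_{2,n}^\R,u}$ is Morse; splitting the Hessian along the radial direction spanned by $x\otimes x$ and the directions tangent to $\bS_Q$ shows this is equivalent to $F_u|_{\bS_Q}$, and hence to $F_u|_{\R\PP^n}$, being Morse, with a further generic open condition ensuring that $F_u$ does not vanish at any of its critical points. The mod-$2$ Poincar\'e polynomial of $\R\PP^n$ equals $1+t+\cdots+t^n$, so $\sum_i b_i(\R\PP^n;\F_2)=n+1$, and the weak Morse inequalities with $\F_2$-coefficients (equivalently, the Lusternik--Schnirelmann bound via $\mathrm{cat}(\R\PP^n)=n+1$) force at least $n+1$ critical points of $F_u|_{\R\PP^n}$. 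Each such critical point lifts to a real ED critical point on $C\sV_{2,n}^\R$, proving $\EDeg_Q(\sV_{2,n})\geq n+1$; the matching equality $\EDeg_{Q_F}(\sV_{2,n})=n+1$ is classical (spectral theorem) and also recovered from Theorem~\ref{thm: FO formula}.

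The principal obstacle is the absence of a secondary involution on $\bS_Q$ analogous to $(x,y)\mapsto(-x,y)$ used in the matrix case: that argument doubled the Morse bound by pairing critical points of complementary indices under a sign-reversing symmetry of $F_u$, a device not available here because collapsing the two tensor factors to one destroys it. Consequently, working with $\Q$-coefficients on $\bS_Q\cong S^n$ only yields the useless bound of two critical points, and one genuinely needs to descend to $\R\PP^n$ and exploit its full $\F_2$-cohomology ring $\F_2[w]/(w^{n+1})$ of total rank $n+1$. A secondary technical point to verify carefully is the transfer of Morse-ness from the non-compact cone to the compact quotient via the Hessian decomposition described above, together with the genericity of the non-vanishing of $F_u$ at its critical points.
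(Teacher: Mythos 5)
Your proof is correct and reaches the same conclusion as the paper's, but via a genuinely different counting mechanism. The paper stays on the sphere $\bS^Q\cong S^n$ and works with \emph{strong} Morse inequalities over $\Q$: it notes that all $m_k$ are even (because $F_u(-x)=F_u(x)$), then runs through the alternating sums index-by-index to force $m_k\ge 2$ for every $k\in\{0,\dots,n\}$, and finally divides by two since antipodal critical points give the same ED critical point. You instead descend the $\Z/2$-invariant function to $\R\PP^n=\bS_Q/\{\pm 1\}$ and invoke the \emph{weak} Morse inequalities with $\F_2$-coefficients (or, equivalently, the Lusternik--Schnirelmann bound), getting $\sum_k m_k\ge\sum_k b_k(\R\PP^n;\F_2)=n+1$ in a single step, with no need to divide by two at the end. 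Your route is arguably more conceptually direct---the mod-$2$ cohomology ring $\F_2[w]/(w^{n+1})$ of the quotient is exactly the invariant doing the work, and you avoid the index-by-index bookkeeping that strong Morse requires---while the paper's route is more elementary in that it never changes coefficient rings. Both approaches must implicitly discard critical points at which $F_u$ vanishes (these would map to the cone vertex, a singular point, not a smooth ED critical point); you flag this explicitly as a genericity condition, which the paper glosses over with a parenthetical ``(note that $\lambda\neq 0$)''. One small caveat: you frame the argument as ``the secondary involution of Theorem~\ref{thm: lower bound Q-distance function} is unavailable, so one must pass to $\R\PP^n$,'' but the paper in fact does not attempt to use that involution here either---it sidesteps it with the strong inequalities plus parity. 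So the obstacle you identify is real, but both resolutions (strong Morse on $S^n$ vs.\ $\F_2$-Morse on $\R\PP^n$) are available and the paper chose the former.
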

\begin{proof}
The proof strategy is similar to the one of Theorem \ref{thm: lower bound Q-distance function}.
The set $\bS^Q\coloneqq\{x\in V^\mR\mid q(x\otimes x)=1\}$ of vectors whose symmetric square has unit $q$-norm is diffeomorphic to an $n$-sphere of an arbitrary norm in $V^\mR$.
A real symmetric matrix of rank one can be represented as $\lambda\,x\otimes x\in C\sV_{2,n}^\mR$ for some $x\in\bS^Q$ and $\lambda\neq 0$.
By definition, if $\lambda\,x\otimes x\neq 0$ is an ED critical point relative to $u\in S^2V^\mR$, then $u-\lambda\,x\otimes x\in N_{\lambda\,x\otimes x}C\sV_{2,n}^\mR$.
Since $x\otimes x\in T_{\lambda\,x\otimes x}C\sV_{2,n}^\mR$, we have 
\begin{equation}
Q(u,x\otimes x)-\lambda\,Q(x\otimes x, x\otimes x) = Q(u,x\otimes x)-\lambda\,q(x\otimes x) = Q(u,x\otimes x)-\lambda = 0\,.
\end{equation}
Furthermore, we have
\begin{align}
\begin{split}
q(u-Q(u,x\otimes x)\,x\otimes x) &= Q(u,u)-2\,Q(u,x\otimes x)Q(u,x\otimes x)+Q(u,x\otimes x)^2\\
&= q(u)-Q(u,x\otimes x)^2\,,
\end{split}
\end{align}
thus $Q(u,x\otimes x)\,x\otimes x$ is an ED critical point relative to $u$ if and only if $x\in\bS^Q$ is a critical point of the function $F_u(x)=Q(u,x\otimes x)$, $x\in \bS^Q$ (note that $\lambda=Q(u,x\otimes x)\neq 0$).
Furthermore, $Q(u,x\otimes x)\,x\otimes x\in C\sV_{2,n}^\mR$ is a degenerate critical point if and only if $x\in \bS^Q$ is a degenerate critical point of $F_u|_{\bS^Q}$. 
Lemma \ref{lem: generic=Morse} then implies that $F_u|_{\bS^Q}$ is Morse if and only if $u$ is generic.
Denoting by $m_k$ the number of critical points of $F_u|_{\bS^Q}$ of index $k\in\{0,1\ldots,n\}$, by Theorem \ref{thm: strong Morse} we get that
\begin{equation}
\sum_{k=0}^i(-1)^{i-k} m_k\ \ge\ \sum_{k=0}^i (-1)^{i-k} b_k(\bS^Q)\,,
\end{equation}
where the Betti numbers of $\bS^Q$ are
\begin{equation}\label{eq: Betti S_Q}
b_k(\bS^Q)=
\begin{cases}
1 & \text{for $k\in\{0,n\}$}\\
0 & \text{otherwise.}
\end{cases}
\end{equation}
Since $F_u(x)=F_u(-x)$, the numbers $m_k$ have to be even.
From this, since $m_0\ge b_0(\bS^Q)=1$, we have that $m_0\ge 2$.
This inequality together with $m_1-m_0\ge b_1-b_0=-1$ and the fact that $m_1$ is even implies that $m_1\ge 2$.
Proceeding so, we obtain for $i\in\{1,\ldots,n-1\}$ that
\begin{equation}
\sum_{k=0}^i (-1)^{i-k} m_k\ \ge\ \sum_{k=0}^i (-1)^{i-k} b_k(\bS^Q)\ =\ (-1)^i\,.
\end{equation}
The previous inequalities and the fact that all $m_k$ are even yield
\begin{align}\label{eq: local_even}
 \sum_{k=0}^i (-1)^{i-k} m_k \ge 2
\end{align}
for even $i$ and
\begin{align}\label{eq: local_odd}
 \sum_{k=0}^i (-1)^{i-k} m_k \ge 0
\end{align}
for odd $i$. Summing these two inequalities for $i-1$ and $i<n$, we obtain that $m_i\ge 2$.
Furthermore, if $n$ is even, we have that $\sum_{k=0}^n (-1)^{n-k} m_k\ge \sum_{k=0}^n (-1)^{n-k} b_k(\bS^Q)=2$, which summed with \eqref{eq: local_odd} for $i=n-1$ yields $m_n\ge 2$.
Analogously, if $n$ is odd, we have that $\sum_{k=0}^n(-1)^{n-k} m_k\ge \sum_{k=0}^n (-1)^{n-k} b_k(\bS^Q)=0$, which summed with \eqref{eq: local_even} for $i=n-1$ gives $m_n\ge 2$.
Summarizing, we have that the Morse function $F_u|_{\bS^Q}$ has at least $m_k\ge 2$ critical points of each Morse index $k\in\{0,1\ldots,n\}$.
Since antipodal critical points $x,-x\in\bS^Q$ of $F_u|_{\bS^Q}$ give rise to the same ED critical point $Q(u,x\otimes x)\,x\otimes x$ relative to $u$, we have that there are at least $n+1$ real ED critical points of $u$. This proves the inequality \eqref{eq: lower bound ED degree symmetric matrices}.
\end{proof}

\section{ED defects of special varieties}\label{sec: ED defects}

In this section we will study the ED defects in more detail for quadric hypersurfaces, rational normal curves, and Veronese surfaces. This investigation will generalize the study of the ED defects of the Segre-Veronese varieties of previous sections to more general projective varieties. It will provide an approach helping us answer the question about finding the inner product that induces the minimum ED degree for a general projective variety, namely Problem~\ref{prob: min ED defect quadric hypersurface}.

\subsection{Quadric hypersurfaces}\label{subsec: ED defects quadrics}
Consider the case $k=2$ and $\bn=\bd=(1,1)$. The Segre variety $\Sigma_2=\Sigma_\bn$ is a smooth quadric surface in $\PP(\C^2\otimes\C^2)=\PP^3_{\mC}$. Since $\Sigma_2=\V(F)$ for some quadratic homogeneous polynomial $F$, we may consider the symmetric matrix $M_F$ associated with the polynomial $F$. We always assume that $\dim(\Sigma_2^\mR)=2$, or rather that the real zero locus of $F$ is a surface in $\PP^3_{\mR}$. Similarly, for a real symmetric bilinear form $Q\in S^2\R^4$, we let $M_Q$ be the symmetric matrix associated with $Q$. As long as $M_Q$ varies in $S^2\C^4$ and is not proportional to $M_F$, the pair $(M_F,M_Q)$ defines a pencil of quadrics in $\PP^3_{\mC}$, that is, a projective line in $\PP(S^2\C^4)$, or rather a point in the Grassmannian $\bG(1,\PP(S^2\C^4))$.
On the one hand, we have already pointed out in the previous sections that the value of $\EDeg_Q(\Sigma_2)$ depends on the intersection between $\Sigma_2$ and the isotropic quadric $\sQ=\V(Q)$. On the other hand, since $M_F$ and $M_Q$ have real entries, to compute the possible values of $\EDeg_Q(\Sigma_2)$, we may pass through the classification of pencils of real symmetric matrices, especially those containing at least one positive definite element.

Let $\K$ be either $\R$ or $\C$ and let $Q_1, Q_2 \in \K[x_0, \ldots, x_n]$ homogeneous of degree two spanning a pencil of quadrics $\{Q_1 - tQ_2 = 0\}$. We can then associate the pencil of Hessian matrices $A_1-tA_2$; linear changes of coordinates correspond to congruences $P(A_1-tA_2)P^\mT$. Hence, to classify pencils of quadrics up to a change of coordinates is to classify pencils of symmetric matrices up to congruence. 

Suppose that $\det(A_2) \neq 0$, and consider the map $\phi\colon \K[t]^{\oplus(n+1)} \to \K[t]^{\oplus(n+1)}$ given by multiplication with $A_1-tA_2$. It follows that $\mathrm{coker}(\phi)$ is a finitely generated torsion $\K[t]$-module and the structure theorem implies that 
\[
\mathrm{coker}(\phi) = \frac{\K[t]}{q_1(t)}\oplus\cdots\oplus\frac{\K[t]}{q_r(t)}\,,
\]
where $q_j(t)$ are primary factors of $\det(A_1-tA_2)$, called the elementary divisors.
For $\K=\C$, we may write $q_j(t) = (t-a_i)^{\sigma_{i,j}}$, and the collection of the integers $\sigma_{i,j}$ form the so-called the \emph{Segre symbol} of the pencil. Two pencils of complex symmetric matrices sharing the same Segre symbol and the same roots $a_i$ are congruent, see \cite[Theorem 1.1]{fevola2021pencils}. 

The real case is more subtle. Besides the elementary divisors, that do not necessarily have real roots, one also needs to consider the inertia. We refer to \cite[Theorem 2]{Thomp} for the classification. Nonetheless, a pencil of real symmetric matrices $\{A_1-tA_2\}$ such that $A_2$ is positive definite is rather special.

\begin{lemma}\label{lem: simult-diag}
Let $\{A_1-tA_2\}$ be a pencil of real symmetric $n\times n$ matrices such that $A_2$ is positive definite. Then there exists $S \in \mathrm{GL}(n, \R)$ such that 
\[
S(A_1-tA_2)S^\mT = \Lambda - t\,I_n\,,
\]
where $\Lambda$ is diagonal with the same eigenvalues as $A_2^{-1}A_1$ and $I_n$ is the identity matrix of order $n$. In particular, only the number $1$ can occur in the Segre symbol. 
\end{lemma}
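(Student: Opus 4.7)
The plan is to execute the classical simultaneous diagonalization of a pair of symmetric matrices when one of them is positive definite, and then read off the Segre symbol from the resulting canonical form.

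First, since $A_2$ is real symmetric and positive definite, I would invoke the spectral theorem to write $A_2 = U D U^\mT$ with $U$ orthogonal and $D$ diagonal with strictly positive entries. This produces a well-defined positive definite square root $A_2^{1/2} = U D^{1/2} U^\mT$, which is invertible with inverse $A_2^{-1/2}$. I would then set $B \coloneqq A_2^{-1/2} A_1 A_2^{-1/2}$ and note that $B$ is symmetric, since both $A_1$ and $A_2^{-1/2}$ are. Applying the spectral theorem again to $B$, I get an orthogonal $V$ and a real diagonal $\Lambda$ such that $V^\mT B V = \Lambda$.

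Next, I would define $S \coloneqq V^\mT A_2^{-1/2} \in \mathrm{GL}(n,\R)$ and verify the two identities $S A_2 S^\mT = V^\mT A_2^{-1/2} A_2 A_2^{-1/2} V = V^\mT V = I_n$ and $S A_1 S^\mT = V^\mT B V = \Lambda$. Subtracting $t$ times the first from the second gives exactly the desired canonical form $S(A_1 - tA_2)S^\mT = \Lambda - tI_n$. To identify the spectrum of $\Lambda$, observe that $B = A_2^{-1/2} A_1 A_2^{-1/2}$ is conjugate (in $\mathrm{GL}(n,\R)$) to $A_2^{-1} A_1 = A_2^{-1/2} \cdot B \cdot A_2^{1/2}$, so the two matrices share the same eigenvalues, namely the diagonal entries of $\Lambda$.

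Finally, for the statement about the Segre symbol, I would compute the elementary divisors directly from the diagonal form $\Lambda - t I_n = \mathrm{diag}(\lambda_1 - t, \ldots, \lambda_n - t)$. The Smith normal form over $\R[t]$ is diagonal with entries $(t - \lambda_i)$, so every invariant factor is linear and each exponent $\sigma_{i,j}$ in the Segre symbol equals $1$. I do not expect a genuine obstacle here: everything rests on the existence of $A_2^{1/2}$ and on applying the spectral theorem twice. The only mild subtlety worth flagging is that one should work with $A_2^{-1/2} A_1 A_2^{-1/2}$ (which is symmetric and hence orthogonally diagonalizable) rather than with $A_2^{-1} A_1$ directly, since the latter is not in general symmetric even though its eigenvalues are real.
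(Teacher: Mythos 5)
Your proof is correct and follows essentially the same route as the paper: both reduce to orthogonally diagonalizing a symmetric conjugate of $A_1$, the paper using the Cholesky factor $A_2 = LL^\mT$ where you use the symmetric square root $A_2^{1/2}$, a purely cosmetic difference. One small imprecision worth noting: $\mathrm{diag}(\lambda_1 - t,\ldots,\lambda_n - t)$ is not in general the Smith normal form over $\R[t]$ (the invariant factors must divide each other, so with repeated eigenvalues they get collected), but the entries $t-\lambda_i$ are the elementary divisors, which is exactly what the Segre symbol records, so your conclusion that every exponent equals $1$ is correct.
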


\begin{remark}
This result is proved, for instance, in \cite[Theorem 7.6.4]{HJ}. We reproduce the argument below. Note that the lemma reduces the classification of pencils of real symmetric matrices containing a positive definite element to the classification of roots and Segre symbols, as in the complex case.
\end{remark}
\begin{proof}[Proof of Lemma \ref{lem: simult-diag}]
A positive definite matrix $A_2$ admits a Cholesky decomposition $A_2 = LL^\mT$. 
Due to the spectral theorem, the real symmetric matrix $L^{-1} A_1 (L^{-1})^\mT$ can be decomposed as $L^{-1} A_1 (L^{-1})^\mT=U \Lambda U^\mT$, where $U \in \mathrm{O}(n,\R)$ is an orthogonal matrix and $\Lambda$ is diagonal. To conclude, define $S \coloneqq U^\mT L^{-1}$.

Note that $A_2^{-1}A_1 = (S^\mT S)(S^{-1} \Lambda (S^\mT)^{-1}) = S^\mT\Lambda (S^\mT)^{-1}$. Hence $A_2^{-1}A_1$ and $\Lambda$ have the same eigenvalues. Furthermore, since $\Lambda - t\,I_n$ is diagonal, the elementary divisors have the form $t-\lambda$, where $\lambda$ is an eigenvalue of $\Lambda$. 
\end{proof}

\begin{example}\label{ex: P1xP1 study defect}
Let us return to the study of $\EDeg_Q(\Sigma_2)$. The radical ideal of $\Sigma_2$ is generated by the homogeneous polynomial $F(x_0,x_1,x_2,x_3)=x_0x_3-x_1x_2$. Hence, the Hessian matrix $M_F$ has signature $(2,2)$. Let $Q$ be another symmetric bilinear form such that its Hessian $M_Q$ is positive definite. Due to Lemma \ref{lem: simult-diag}, $M_F - tM_Q$ is congruent to $\mathrm{diag}(a_0-t, a_1-t, a_2-t, a_3-t)$ for $a_0,a_1, a_2, a_3\in \R \setminus \{0\}$. Imposing that $M_F$ has signature $(2,2)$ we fall into one of the cases below. Given $\sQ=\V(Q)$, we also describe $\Sigma_2\cap\sQ \subset\PP^3_{\mC}$, following \cite[Example 3.1]{fevola2021pencils}, and compute the corresponding ED degree of $\Sigma_2$. 
\begin{enumerate}
 \item $a_0 < a_1 < 0 < a_3 < a_3$: In this case, $\sigma = [1,1,1,1]$ and $\Sigma_2\cap\sQ$ is a smooth elliptic curve. Then $\EDef_Q(\Sigma_2)=0$ and $\EDeg_Q(\Sigma_2)=\gEDeg(\Sigma_2)=6$.
 \item $a_0= a_1 < 0 < a_2<a_3$: In this case, $\sigma = [(1,1),1,1]$ and $\Sigma_2\cap\sQ$ is the union of two noncoplanar conics that meet in two points $P_1,P_2$. By Corollary \ref{cor: ED defect isolated singularities} we have $\EDef_Q(\Sigma_2)=\mu_{P_1}+\mu_{P_2}=2$, namely $\EDeg_Q(\Sigma_2)=6-2=4$.
 \item $a_0 = a_1 < 0 < a_2 = a_3$: In this case $\sigma = [(1,1),(1,1)]$ and $\Sigma_2\cap\sQ$ is the union of four lines, and each line meets two other lines, so $\sZ=\mathrm{Sing}(\Sigma_2\cap\sQ)$ consists of four simple points $P_1,P_2,P_3,P_4$. Note that this is the case when $Q=Q_F$ is the Frobenius inner product, or any product metric, namely an element of $S^2\C^2\otimes S^2\C^2\subset S^2(\C^2\otimes\C^2)$. By Corollary \ref{cor: ED defect isolated singularities} we have $\EDef_{Q_F}(\Sigma_2)=\sum_{i=1}^4\mu_{P_i}=1+1+1+1=4$, namely $\EDeg_{Q_F}(\Sigma_2)=6-4=2$.
\end{enumerate}
Another Segre symbol also involves only the number $1$, namely $[(1,1,1),1]$. However, the associated pencil does not contain any matrix of signature $(2,2)$. Also note that the quadric $\sQ_F$ coming from the Frobenius inner product has the maximal ED defect, as proved in Theorem \ref{thm: lower bound Q-distance function}. Equivalently, we have that
\[
\min_{Q\in S_+^2(\R^2\otimes\R^2)}\Phi_{\Sigma_2}=2\,,
\]
where the ED degree map $\Phi_{\Sigma_2}$ is defined in \eqref{eq: ED degree map}.
Nonetheless, we can get greater ED defects if we allow more general pencils. For instance, for the Segre symbol $[(2,2)]$, the intersection $\Sigma_2\cap\sQ$ is the union of a double line and two other lines that meet the double line at the points $P_1,P_2$. This case is studied in detail in \cite[Example 4.6]{maxim2020defect}, and we have that $\EDef_Q(\Sigma_2)=5$ or, equivalently, $\EDeg_Q(\Sigma_2)=6-5=1$.\hfill$\diamondsuit$
\end{example}

This example tells us that, after fixing a Segre-Veronese variety $\sV_{\bd,\bn}\subset\PP(S^\bd V)$, there may exist smooth complex quadric hypersurfaces $\sQ\subset\PP(S^\bd V)$ such that $\EDef_Q(\sV_{\bd,\bn})$ is greater than $\EDef_{Q_F}(\sV_{\bd,\bn})$. Our Conjecture \ref{conj: main} subsumes that if we restrict to real $\sQ\subset\PP(S^\bd V)$ whose associated matrix $M_Q$ is positive definite, then always $\EDef_Q(\sV_{\bd,\bn})\leq \EDef_{Q_F}(\sV_{\bd,\bn})$.

\begin{example}\label{ex: ED defects Veronese conic}
An analogous case study is also possible for the plane Veronese conic $\sV_{2,1}\subset\PP^2_{\mR}$ that parametrizes $2\times 2$ real symmetric matrices of rank one. It is the zero locus of the homogeneous polynomial $F(x_0,x_1,x_2)=x_0x_2-x_1^2$ whose Hessian matrix $M_F$ has signature $(2,1)$.
Given another conic $\sQ\subset\PP^2$ corresponding to a positive definite matrix $M_Q$, Lemma \ref{lem: simult-diag} implies that $M_F-t M_Q$ is congruent to $\mathrm{diag}(a_0-t,a_1-t,a_2-t)$ for some real $a_0, a_1, a_2\in \R\setminus\{0\}$. Since signatures of congruent matrices coincide, we can distinguish two cases:
\begin{enumerate}
 \item $a_0<a_1<0<a_2$. In this case $\sigma=[1,1,1]$ and the intersection $\sV_{2,1}\cap \sQ$ consists of $4$ reduced points. Then $\EDef_Q(\sV_{2,1})=0$ and $\EDeg_Q(\sV_{2,1})=\gEDeg(\sV_{2,1})=4$. 
 \item $a_0=a_1<0<a_2$. Here $\sigma=[(1,1),1]$ and $\sV_{2,1}\cap \sQ$ consists of $2$ double points $P_1,P_2$. We have that $\EDef_Q(\sV_{2,1})=\mu_{P_1}+\mu_{P_2}=2$ and hence $\EDeg_Q(\sV_{2,1})=4-2=2$. This case corresponds to the Frobenius inner product.
\end{enumerate}
The previous argument reproves 
\[
\min_{Q\in S_+^2(S^2\R^2)}\Phi_{\sV_{2,1}}=2\,,
\]
that is anticipated from Theorem \ref{thm: symmetric_matrices}.\hfill$\diamondsuit$
\end{example}

An instance of the general problem formulated in Remark \ref{rmk: properties ED degree map} is the following.

\begin{problem}\label{prob: min ED defect quadric hypersurface}
Let $\sX$ be a fixed smooth real quadric hypersurface in $\PP(V^\mR)$.
Determine
\[
\min_{Q\in S_+^2V^\mR}\Phi_\sX(Q)\,,
\]
or equivalently the maximum possible ED defect of $\sX$ among all inner products $Q\in S_+^2V^\mR$.
\end{problem}

A result \cite[Eq. (7.1)]{DHOST} related to Problem \ref{prob: min ED defect quadric hypersurface} implies that the generic ED degree of a smooth quadric hypersurface $\sX\subset\PP^N$ is $2N$.

\begin{proposition}\label{prop: ED degree X quadric}
Let $\sX=\V(F)$ and $\sQ=\V(Q)$ be smooth quadric hypersurfaces in $\PP^N$ such that the associated symmetric matrices $M_F$ and $M_Q$ are real. Furthermore, assume that $M_Q$ is positive definite. Then $\EDeg_Q(\sX)= 2(r-1)$, where $r$ is the number of distinct eigenvalues of $M_Q^{-1}M_F$.
\end{proposition}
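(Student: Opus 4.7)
The plan is to first reduce, via a linear change of coordinates provided by Lemma \ref{lem: simult-diag}, to the case in which $Q$ is the standard inner product and $F$ is a diagonal quadratic form, then to explicitly solve the Lagrange conditions and count roots of a resulting univariate polynomial in the Lagrange multiplier.

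First, I would apply Lemma \ref{lem: simult-diag} to the pencil $\{M_F - tM_Q\}$: there exists $S\in\mathrm{GL}(N+1,\R)$ and eigenvalues $\lambda_0,\ldots,\lambda_N$ of $M_Q^{-1}M_F$ such that $S(M_F-tM_Q)S^\mT = \Lambda - tI$, where $\Lambda=\mathrm{diag}(\lambda_0,\ldots,\lambda_N)$. The linear isomorphism $T=(S^\mT)^{-1}$ of $V$ carries $Q$ to the standard inner product and $F$ to $y\mapsto y^\mT\Lambda y$. Since ED degree is invariant under such a simultaneous change of coordinates (the pair $(\sX,Q)$ being transported by the same $T$), we may henceforth assume $M_Q=I$ and $M_F=\Lambda$. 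Note that since $\sX$ is smooth, $M_F$ is invertible and hence all $\lambda_i\neq 0$.

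Second, for a data point $u\in\C^{N+1}$ and a smooth point $x\in C\sX\setminus\{0\}$, the ED critical condition $u-x\in N_xC\sX$ becomes $u-x=\mu\,\Lambda x$ for some $\mu\in\C$, i.e.\ $x=(I+\mu\Lambda)^{-1}u$, subject to $x^\mT\Lambda x=0$. Substituting, the constraint becomes $\sum_{i=0}^{N}\lambda_i\,u_i^2/(1+\mu\lambda_i)^2=0$. Grouping indices according to the distinct eigenvalues $\lambda_{(1)},\ldots,\lambda_{(r)}$ of $\Lambda$ and setting $U_j^2\coloneqq\sum_{i:\lambda_i=\lambda_{(j)}}u_i^2$, this rewrites as
\begin{equation*}
\sum_{j=1}^{r}\lambda_{(j)}\,\frac{U_j^2}{(1+\mu\lambda_{(j)})^2}=0\,.
\end{equation*}
Clearing denominators yields the polynomial equation
\begin{equation*}
P(\mu)\coloneqq\sum_{j=1}^{r}\lambda_{(j)}U_j^2\prod_{k\neq j}(1+\mu\lambda_{(k)})^2=0\,,
\end{equation*}
which has degree at most $2(r-1)$. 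The coefficient of $\mu^{2(r-1)}$ equals $\bigl(\prod_k\lambda_{(k)}^2\bigr)\sum_j U_j^2/\lambda_{(j)}$, which is nonzero for generic $u$, so $\deg P=2(r-1)$.

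Third, I would verify the bijection between roots of $P$ and ED critical points. The map $\mu\mapsto x(\mu)=(I+\mu\Lambda)^{-1}u$ is injective on its domain of definition: if $x(\mu_1)=x(\mu_2)$ then $(\mu_1-\mu_2)\Lambda x(\mu_2)=0$, forcing $\mu_1=\mu_2$ as $\Lambda$ and $x(\mu_2)$ are both nonzero for $u\neq0$. The only potential obstruction is that a root of $P$ could coincide with a pole $\mu_0=-1/\lambda_{(j_0)}$; however, evaluating $P$ at $\mu_0$ leaves only the $j=j_0$ term, giving $P(\mu_0)=\lambda_{(j_0)}U_{j_0}^2\prod_{k\neq j_0}(1-\lambda_{(k)}/\lambda_{(j_0)})^2$, which is nonzero for generic $u$ since the $\lambda_{(j)}$ are distinct and $\lambda_{(j_0)}\neq 0$.

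Fourth, the main obstacle is showing that for generic $u$ the $2(r-1)$ roots of $P$ are all simple, so that they correspond to $2(r-1)$ distinct ED critical points. The cleanest route is to treat $(U_1^2,\ldots,U_r^2)$ as independent parameters (this is legitimate because each $U_j^2$ is a nondegenerate quadratic form on an $m_j$-dimensional coordinate subspace) and to show that the discriminant $\mathrm{disc}_\mu(P)$ is a nonzero polynomial in these parameters. Equivalently, one exhibits a single choice of $u$ (and hence of the $U_j^2$) for which $P$ has $2(r-1)$ distinct complex roots; a convenient test case is taking all $U_j^2$ with generic magnitudes and opposite signs, where one can argue by a limiting/perturbation analysis from the case $r=N+1$ already treated in \cite{DHOST}. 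Once simplicity is established, the bijection above yields exactly $2(r-1)$ ED critical points relative to generic $u$, completing the proof that $\EDeg_Q(\sX)=2(r-1)$.
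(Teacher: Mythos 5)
Your argument is correct in outline, but it takes a genuinely different route from the paper. After the common reduction via Lemma~\ref{lem: simult-diag} to $M_Q=I$, $M_F=\Lambda$ diagonal, the paper does not touch the Lagrange conditions at all: it instead analyzes the singular locus of $\sX\cap\sQ$, observing that it is a disjoint union of smooth quadrics $\sS_i$ of dimension $\ell_i-2$ (one for each eigenvalue of multiplicity $\ell_i\ge 2$), and applies the Maxim--Rodriguez--Wang formula (Corollary~\ref{cor: ED defect equisingular} and the isolated-singularity case of Corollary~\ref{cor: ED defect isolated singularities}) to obtain $\EDef_Q(\sX)=\sum_i 2(\ell_i-1)=2(N+1-r)$, then subtracts from $\gEDeg(\sX)=2N$. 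Your approach, by contrast, is a direct Lagrange-multiplier count: you parametrize critical points by $x=(I+\mu\Lambda)^{-1}u$, reduce the constraint $x^\mT\Lambda x=0$ to a polynomial $P(\mu)$ of degree $2(r-1)$, and count its roots. Your route is more elementary and self-contained (no Milnor fibers, Chern--Mather classes, or generic ED degree formula needed), whereas the paper's route fits its overarching ED-defect framework, which is reused for rational normal curves and Veronese surfaces in Section~\ref{sec: ED defects}. The one soft spot in your write-up is the final paragraph: the claim that $P(u,\cdot)$ has $2(r-1)$ \emph{simple} roots for generic $u$ is only sketched, and the proposed ``limiting/perturbation analysis from $r=N+1$'' is not a well-posed argument since varying $r$ changes the polynomial $P$ itself. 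A clean way to close this is to observe that a repeated root of $P(u,\cdot)$ forces $u$ into the branch locus of the birationally equivalent map $\pr_2\colon\sE(\sX,Q)\to V$, and this branch locus is a proper subvariety because $\pr_2$ is a dominant morphism between irreducible varieties of equal dimension $N+1$ in characteristic zero (generic smoothness); alternatively, one can verify directly that $\mathrm{disc}_\mu(P)$ is a nonzero polynomial in the independent parameters $U_1^2,\ldots,U_r^2$, as you compute explicitly for $r=2$.
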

\begin{proof}
Due to Lemma \ref{lem: simult-diag} the pencil of matrices $M_F + tM_Q$ is congruent to $\Lambda - t\,I_{N+1}$ where $\Lambda$ has the same eigenvalues as $M_Q^{-1}M_F$, say, $a_1, \ldots, a_r$ are the distinct ones. Let $\{e_1, \ldots, e_{N+1}\}$ denote the chosen basis of $V$ and define $\Gamma_i\coloneqq\{\, j \in[N+1] \mid \Lambda e_j = a_i e_j\}$. Then the quadrics $F$ and $Q$ in these coordinates are, respectively,
\[
F = \sum_{i=1}^r a_i\sum_{j\in \Gamma_i}x_j^2 \quad \text{ and } \quad Q = \sum_{i=1}^{N+1} x_i^2.
\]
It follows from the Jacobian ideal computation that $\sX\cap\sQ$ is singular along $\sS_i = \V(\{x_j\mid j\not\in\Gamma_i\})\cap\sQ\subset\PP^N$, which is not empty only if $\ell_i\coloneqq|\Gamma_i|\ge 2$. Furthermore, note that $\sS_i$ are smooth and pairwise disjoint. Indeed, for $i\neq j$ we have $\Gamma_i \cap \Gamma_j = \emptyset$, hence the ideal of $\sS_i\cap\sS_j$ contains $x_1, \ldots, x_{N+1}$, so $\sS_i\cap\sS_j$ is empty. Moreover, up to reordering the basis, we may assume that $\Gamma_i = \{1, \ldots, l_i\}$, hence
\[
\sS_i = \V(x_1^2+\ldots + x_{l_i}^2 , x_{l_i+1} , \ldots, x_{N+1})\,,
\]
and it is clear that $\sS_i$ is smooth.

The previous considerations tell us that $\EDef_Q(\sX)$ is the sum of $r$ contributions $c_i$, each coming from every variety $\sS_i$. We claim that $c_i = 2(\ell_i -1)$ for all $i\in[r]$. Indeed, if $\ell_i \ge 3$ then $\sS_i$ is a smooth quadric hypersurface in $\V(\{x_j\mid j\not\in\Gamma_i\})\cong\PP^{\ell_i-1}$. Then, due to Corollary \ref{cor: ED defect equisingular}, $c_i = \gEDeg(\sS_i) = 2(\ell_i-1)$, where the last equality comes from \cite[Eq. (7.1)]{DHOST}. If $\ell_i = 2$, then $\sS_i$ gives two simple isolated singularities, hence $c_i = 2= 2(2-1)$. By abuse of notation, if $\ell_i = 1$, $\sS_i = \emptyset$ and contributes to the ED defect with $c_i=2(\ell_i-1) = 0$. Summing up,
\[
\EDef_Q(\sX) = \sum_{i = 1}^r 2(\ell_i-1) = \sum_{i = 1}^r 2\,\ell_i-2\,r = 2(N+1-r)\,. 
\]
Therefore $\EDeg_Q(\sX)=\gEDeg(\sX)-\EDef_Q(\sX)=2N-2(N+1-r)=2(r-1)$.\qedhere
\end{proof}

\begin{corollary}\label{cor: minimum ED degree quadric}
Consider the hypotheses of Proposition \ref{prop: ED degree X quadric}, and assume that $\sX^\mR\neq\emptyset$. Then the image of the ED degree map $\Phi_\sX$, when restricted to $S_+^2V^\mR$, is
\[
\Phi_\sX(S_+^2V^\mR) = \{2,4,\ldots,2N-2,2N\}\,.
\]
In particular, $\min_{Q\in S_+^2V^\mR}\Phi_\sX(Q) = 2$.
\end{corollary}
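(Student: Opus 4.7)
\emph{Proof plan.} The corollary will follow by combining Proposition~\ref{prop: ED degree X quadric}, which expresses $\EDeg_Q(\sX) = 2(r-1)$ in terms of the number $r$ of distinct eigenvalues of $M_Q^{-1}M_F$, with a Sylvester-type construction realising every prescribed value of $r \in \{2,\ldots,N+1\}$ by some positive definite $M_Q$.

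First I would establish the lower bound $r \geq 2$. The assumption $\sX^\mR \neq \emptyset$ forces $F$ to be indefinite, so $M_F$ has at least one positive and one negative eigenvalue. Lemma~\ref{lem: simult-diag} supplies an invertible $S \in \mathrm{GL}(N+1,\R)$ with $S(M_F - tM_Q)S^\mT = \Lambda - tI_{N+1}$, in particular $SM_FS^\mT = \Lambda$; by Sylvester's law of inertia $\Lambda$ inherits the signature of $M_F$, so its diagonal entries---which are precisely the eigenvalues of $M_Q^{-1}M_F$---include both signs, yielding $r \geq 2$ and $\EDeg_Q(\sX)\geq 2$.

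Next I would show that every value in $\{2,4,\ldots,2N\}$ is attained. Let $(p,q)$ be the signature of $M_F$, so $p,q \geq 1$ and $p+q = N+1$. Fix $r \in \{2,\ldots,N+1\}$ and choose any real diagonal matrix $\widetilde\Lambda$ of signature $(p,q)$ with exactly $r$ distinct diagonal entries; this is elementary since $r$ can range freely between $2$ and $p+q$. Sylvester's law produces an invertible $S \in \mathrm{GL}(N+1,\R)$ with $SM_FS^\mT = \widetilde\Lambda$. Setting $M_Q \coloneqq (S^\mT S)^{-1}$, which is real symmetric positive definite, a direct computation gives
\[
M_Q^{-1}M_F \;=\; (S^\mT S)\,S^{-1}\widetilde\Lambda\,(S^\mT)^{-1} \;=\; S^\mT\,\widetilde\Lambda\,(S^\mT)^{-1},
\]
so $M_Q^{-1}M_F$ is similar to $\widetilde\Lambda$ and therefore has exactly $r$ distinct eigenvalues. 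Proposition~\ref{prop: ED degree X quadric} then yields $\EDeg_Q(\sX) = 2(r-1)$, sweeping out $\{2,4,\ldots,2N\}$ as $r$ ranges over $\{2,\ldots,N+1\}$.

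Combining the two steps gives $\Phi_\sX(S_+^2 V^\mR) = \{2,4,\ldots,2N\}$ and, in particular, $\min_{Q\in S_+^2 V^\mR}\Phi_\sX(Q) = 2$. I do not expect any genuine obstacle: once Proposition~\ref{prop: ED degree X quadric} is available, everything reduces to a clean Sylvester-law construction that assembles $M_Q$ from a prescribed diagonal model of $M_F$.
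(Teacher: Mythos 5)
Your argument is correct and is essentially the paper's intended one: the corollary is a direct consequence of Proposition~\ref{prop: ED degree X quadric} once you observe that $r\geq 2$ is forced by $\sX^\mR\neq\emptyset$ (so $M_F$ is indefinite, and Sylvester's law of inertia applied to $SM_FS^\mT = \Lambda$ gives both signs among the diagonal entries) and that every $r\in\{2,\ldots,N+1\}$ is realizable by prescribing a diagonal model $\widetilde\Lambda$ with that signature and $r$ distinct entries and taking $M_Q=(S^\mT S)^{-1}$. The paper states the corollary without a detailed proof, so your careful construction supplies exactly the verification it leaves implicit.
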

The minimum $2$ of $\Phi_\sX$ is attained precisely when $M_Q^{-1}M_F$ has two distinct eigenvalues, it includes what Aluffi and Harris call a ``sphere'', see \cite[\S9.2]{aluffi-harris}.
On the other hand, if $\sX^\mR=\emptyset$, namely $\sX=\V(\widetilde{Q})$ for some $\widetilde{Q}\in S_+^2V^\mR$, then $\EDeg_{\widetilde{Q}}(\sX)=0$ by Proposition \ref{prop: ED degree X quadric}.

\subsection{Rational normal curves}\label{subsec: ED defects rational normal curves}

Coming back to Segre-Veronese varieties, let us consider now the case $k=1$, $\bd=(d)$, and $\bn=(1)$, namely the case of a rational normal curve $\sV_{d,1}\subset\PP(S^dV)\cong\PP^d_{\mC}$. By Theorem \ref{thm: gen ED degree Segre-Veronese} we get that
\begin{equation}\label{eq: gEDdegree rational curve}
\gEDeg(\sV_{d,1})=3d-2\,.
\end{equation}
Given any inner product $Q$ on $S^dV^\mR\cong\R^{d+1}$, if $\sQ=\V(Q)\subset\PP^d_{\mC}$ does not contain $\sV_{d,1}$, then the intersection $\sV_{d,1}\cap\sQ$ consists of $2d$ points counted with multiplicity. If $\sQ$ is generic, then $\sV_{d,1}\cap\sQ$ consists of $2d$ distinct points, namely $\EDef_{Q}(\sV_{d,1})=0$. Assume that $Q=Q_F$, where $Q_F$ is a Frobenius inner product. Without loss of generality, we can assume that $Q_F$ is obtained as the tensor power of the standard Euclidean inner product on $V^\mR\cong\R^2$.
Hence, if we consider the parametrization 
\begin{equation}\label{eq: parametrization rational normal curve}
\nu_d([x_0:x_1])=[x_0^d:x_0^{d-1}x_1:\cdots:x_0x_1^{d-1}:x_1^d]
\end{equation}
of $\sV_{d,1}$, then the isotropic quadric $\sQ_F$ has the equation
\begin{equation}\label{eq: equation Q_F case rational normal curve}
\sum_{i=0}^d\binom{d}{i}z_i^2=0\,,
\end{equation}
where $z_0,\ldots,z_d$ are homogeneous coordinates of $\PP^d$.
To compute the intersection $\sV_{d,1}\cap\sQ_F$, we substitute the relations $z_i=x_0^{d-i}x_1^i$ in \eqref{eq: equation Q_F case rational normal curve}. This produces the identity
\[
0=(x_0^2+x_1^2)^d=(x_0+\sqrt{-1}x_1)^d(x_0-\sqrt{-1}x_1)^d\,.
\]
Hence $\sV_{d,1}\cap\sQ_F$ consists of the two points $\nu_d([1:\sqrt{-1}])$ and $\nu_d([1:-\sqrt{-1}])$ with multiplicity $d$. By Corollary \ref{cor: ED defect isolated singularities} we conclude that $\EDef_{Q_F}(\sV_{d,1})=2(d-1)$, that is $\EDeg_{Q_F}(\sV_{d,1})=d$. One may verify that this value is equal to the one obtained, for example, in Theorem \ref{thm: FO formula}.
In general, we have the following result. 

\begin{proposition}\label{prop: ED defect rational normal curve}
Consider the real rational normal curve $\sV_{d,1}^\mR$ parametrized by the map \eqref{eq: parametrization rational normal curve}, and its associated complex curve $\sV_{d,1}\subset\PP^d_{\mC}$. Let $Q$ be a nondegenerate symmetric bilinear form on $S^dV^\mR$. If its associated real symmetric matrix $M_Q$ is positive definite, then $\EDef_{Q}(\sV_{d,1})$ is an even integer. Furthermore, if $\EDef_{Q}(\sV_{d,1})\ge 2d-1$, then the matrix $M_Q$ cannot be chosen real positive definite.
\end{proposition}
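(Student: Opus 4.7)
The plan is to transport the intersection $\sV_{d,1}\cap\sQ$ to the zero locus of a single real binary form on $\PP^1$ and then invoke Corollary \ref{cor: ED defect isolated singularities}. Pulling the quadratic form $q(z)=Q(z,z)$ back through the Veronese parametrization \eqref{eq: parametrization rational normal curve} yields a real binary form
\[
f_Q(x_0,x_1) \;\coloneqq\; q\bigl(\nu_d(x_0,x_1)\bigr)\;\in\;\R[x_0,x_1]_{2d},
\]
whose vanishing scheme on $\PP^1$ is $\nu_d^{-1}(\sV_{d,1}\cap\sQ)$. Because $\sV_{d,1}$ is smooth, at each intersection point of multiplicity $m$ a local uniformizer $t$ on $\sV_{d,1}$ writes the pullback of an equation of $\sQ$ as $t^m$ up to a unit, and the Milnor-number computation recalled right before Theorem \ref{thm: ED defect general} (with ambient dimension $m=0$) gives $\mu_x=m-1$ at such a point.

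Positive definiteness of $M_Q$ enters in two crucial ways. First, the real part of the isotropic cone of a positive definite form reduces to $\{0\}$ and $\nu_d(x_0,x_1)\neq 0$ for any $(x_0,x_1)\in\R^2\setminus\{0\}$, so $f_Q$ has no real roots; in particular $f_Q\not\equiv 0$, hence $\sV_{d,1}\not\subset\sQ$ and Corollary \ref{cor: ED defect isolated singularities} is applicable. Second, since $f_Q$ is a real polynomial, its complex roots come in complex conjugate pairs sharing the same multiplicity.

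Grouping the roots of $f_Q$ into $r\ge 1$ conjugate pairs with common multiplicities $m_1,\ldots,m_r$, one has $\sum_{i=1}^r 2m_i = \deg(f_Q) = 2d$ and Corollary \ref{cor: ED defect isolated singularities} gives
\[
\EDef_Q(\sV_{d,1}) \;=\; \sum_{i=1}^r 2(m_i-1) \;=\; 2d - 2r.
\]
The right-hand side is manifestly even, which proves the first claim; moreover $r\ge 1$ forces $\EDef_Q(\sV_{d,1})\le 2d-2$, which proves the second claim by contraposition. I do not anticipate any substantive obstacle: the essential inputs are the Milnor-number interpretation of the ED defect already stated in Corollary \ref{cor: ED defect isolated singularities} and the elementary pairing of complex roots of a real polynomial that is forced by the positive-definiteness of $M_Q$.
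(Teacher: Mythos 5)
Your proof is correct and follows essentially the same route as the paper's: pull back the quadratic form along the Veronese parametrization to a real binary form of degree $2d$, observe that positive definiteness forbids real roots (so roots pair off into conjugates of equal multiplicity and $\sV_{d,1}\not\subset\sQ$), and then apply Corollary \ref{cor: ED defect isolated singularities} to get $\EDef_Q(\sV_{d,1})=2(d-r)\le 2d-2$. The only cosmetic difference is that you rule out $\sV_{d,1}\subset\sQ$ up front via $f_Q\not\equiv 0$, whereas the paper treats that containment as a separate terminal case; the content is identical.
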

\begin{proof}
Consider the variables $x_0,x_1$ and the vector $v=(x_0^d,x_0^{d-1}x_1,\ldots,x_0x_1^{d-1},x_1^d)$. Suppose first that $\sV_{\bd,\bn}$ is not contained in the quadric $\sQ=\V(Q)$, hence the intersection $\sV_{d,1}\cap\sQ$ consists of $2d$ complex points, counted with multiplicity. These points in $\PP(S^d V)$ correspond in $\PP(V)\cong\PP^1_{\mC}$ to the complex roots of the binary form $b(x_0,x_1)=vM_Qv^\mT$ of degree $2d$. Since $b(x_0,x_1)$ has real coefficients, it has either real roots or complex roots coming in conjugate pairs.

Suppose now that $M_Q$ is real positive definite. Then the equation $b(x_0,x_1) = 0$ cannot have nonzero real solutions $(x_0,x_1)$, therefore it has $t\ge 1$ distinct pairs of nonreal conjugate roots with multiplicities $m_1,\ldots,m_t$ such that $m_1+\cdots+m_t=d$. Hence, by Corollary \ref{cor: ED defect isolated singularities}, we have that
\begin{equation}\label{eq: even defect}
\EDef_{Q}(\sV_{d,1})=\sum_{i=1}^t 2(m_i-1)=2(d-t)\,.
\end{equation}
In particular, the first part of the statement follows. Furthermore, the maximum possible defect is attained for $t=1$ and is equal to $2d-2$, and this value is strictly smaller than the maximum ED defect of $\sV_{d,1}$ when is not contained in $\sQ$, that is equal to $2d-1$.
Instead, when $\sV_{d,1}\subset\sQ$, then $\EDeg_Q(\sV_{d,1})=0$ or $\EDef_Q(\sV_{d,1})=3d-2$.
In this case, we also have $vM_Qv^T=0$ for all vectors $v=(x_0^d,x_0^{d-1}x_1,\ldots,x_0x_1^{d-1},x_1^d)$. Choosing $(x_0, x_1) = (1,0)$, we get a nonzero real solution of $b(x_0,x_1)=0$, hence $M_Q$ cannot be positive definite.
\end{proof}

\begin{remark}
If $M_Q$ is positive definite, then $Q$ has no real points. The converse is true up to the choice of $\pm M_Q$. Sylvester's law of inertia reduces the problem to looking at the possible signatures; if $M_Q$ has both positive and negative eigenvectors, we can cook up a real point in $Q$.
\end{remark}

From the proof of Proposition \ref{prop: ED defect rational normal curve} and \eqref{eq: gEDdegree rational curve}, we also derive the following corollary.

\begin{corollary}\label{cor: minimum ED degree rational curve}
Consider the hypotheses of Proposition \ref{prop: ED defect rational normal curve}. Then the image of the ED degree map $\Phi_{\sV_{d,1}}$, when restricted to $S_+^2(S^dV^\mR)$, is
\[
\Phi_{\sV_{d,1}}(S_+^2(S^dV^\mR)) = \{d,d+2,\ldots,3d-2\}\,.
\]
In particular, $\min_{Q\in S_+^2(S^dV^\mR)}\Phi_{\sV_{d,1}}(Q) = d$.
\end{corollary}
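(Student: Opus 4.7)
The containment $\Phi_{\sV_{d,1}}(S_+^2(S^dV^\mR)) \subseteq \{d, d+2, \ldots, 3d-2\}$ is immediate from Proposition \ref{prop: ED defect rational normal curve}: if $M_Q$ is positive definite, then $b_Q(x_0,x_1) = v^\mT M_Q v$ is strictly positive on $\R^2\setminus\{0\}$ (so in particular $\sV_{d,1}\not\subset\sQ$), and by \eqref{eq: even defect} we have $\EDef_Q(\sV_{d,1}) = 2(d-t)$ for some $t\in\{1,\ldots,d\}$. Combined with $\gEDeg(\sV_{d,1})=3d-2$ from \eqref{eq: gEDdegree rational curve}, this gives $\Phi_{\sV_{d,1}}(Q) = d+2(t-1)$ as desired.

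For the reverse containment, the plan is to construct, for every $t\in\{1,\ldots,d\}$, a positive definite $M\in S_+^2(S^dV^\mR)$ whose associated form $v^\mT M v$ has exactly $t$ distinct pairs of nonreal complex conjugate roots. As a target form, fix distinct positive reals $a_1,\ldots,a_t$ and positive integers $m_1+\cdots+m_t = d$, and set
\[
b(x_0,x_1) := \prod_{j=1}^t (x_0^2+a_j x_1^2)^{m_j}.
\]
This is a strictly positive binary form of degree $2d$ with exactly $t$ pairs of nonreal conjugate roots $\pm\sqrt{a_j}\,i$. The core step is to realize $b$ as $v^\mT M v$ with $M$ positive definite. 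Let $M_F$ denote the positive definite Frobenius matrix, so that $v^\mT M_F v = (x_0^2+x_1^2)^d =: b_F$. Since $b$ is continuous and positive on the unit circle, $b\ge\mu$ there for some $\mu>0$; by homogeneity $b \ge \mu\,b_F$ on $\R^2$, so $b-\tfrac{\mu}{2}b_F \ge \tfrac{\mu}{2}b_F > 0$ on $\R^2\setminus\{0\}$, i.e.\ $b-\tfrac{\mu}{2}b_F$ is itself a positive binary form of degree $2d$. The classical fact that positive even-degree binary forms are sums of two squares (factor over $\C$ and group conjugate linear factors) then yields $b-\tfrac{\mu}{2}b_F = p^2 + q^2$ for real forms $p,q$ of degree $d$. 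Writing $p = \widetilde p^\mT v$ and $q = \widetilde q^\mT v$ with coefficient vectors $\widetilde p,\widetilde q\in\R^{d+1}$, we obtain
\[
b = v^\mT M v, \qquad M := \tfrac{\mu}{2}M_F + \widetilde p\,\widetilde p^\mT + \widetilde q\,\widetilde q^\mT,
\]
and $M$ is positive definite (a positive definite matrix plus two rank-one PSD summands).

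The main obstacle is the last construction: the standard sum-of-two-squares decomposition only produces a \emph{rank-two} PSD matrix representing $b$, which is insufficient since we need $M$ in the open cone $S_+^2$. The trick bypassing this is to reserve a small positive multiple $\tfrac{\mu}{2}b_F$ from $b$ using the strict inequality $b \ge \mu\,b_F$, apply the classical decomposition to the remainder $b-\tfrac{\mu}{2}b_F$, and then add back $\tfrac{\mu}{2}M_F$ to upgrade the rank-two PSD representation into a full-rank positive definite one. With this, the inner product $Q$ with matrix $M$ realizes $\Phi_{\sV_{d,1}}(Q) = d+2(t-1)$ for every $t\in\{1,\ldots,d\}$, completing the proof.
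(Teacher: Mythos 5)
Your proof is correct, and it fills in a genuine gap that the paper leaves implicit. The paper's "proof" of this corollary is a single sentence asserting that it follows from the proof of Proposition~\ref{prop: ED defect rational normal curve} together with \eqref{eq: gEDdegree rational curve}. That proof does deliver the containment $\Phi_{\sV_{d,1}}(S_+^2(S^dV^\mR)) \subseteq \{d,d+2,\ldots,3d-2\}$, which you reproduce directly via \eqref{eq: even defect}. What the paper never makes explicit is the surjectivity claim: that for every $t\in\{1,\ldots,d\}$ there actually exists a positive definite $M_Q$ whose binary form $v^\mT M_Q v$ has exactly $t$ distinct pairs of nonreal conjugate roots. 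The nontrivial content here is that the linear map $M\mapsto v^\mT M v$ sends the open cone of positive definite matrices onto the full open cone of strictly positive degree-$2d$ binary forms. Your perturbation trick --- reserving $\tfrac{\mu}{2}b_F$ before applying the classical sum-of-two-squares decomposition, then adding back $\tfrac{\mu}{2}M_F$ to upgrade the rank-two PSD representative to a full-rank positive definite one --- is a clean and correct way to establish exactly this. This step deserves to be said out loud, and your writeup does the paper a service by supplying it.
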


\begin{remark}
For any $d\ge 1$, we can write an explicit example of a quadric $\sQ_d=\V(Q_d)$ such that $\EDef_{Q_d}(\sV_{d,1})=2d-1$. Indeed, consider the quadric $Q_d$ of equation
\begin{equation}\label{eq: equation Q_d}
z_0^2-\left(\sum_{i=1}^{\lfloor\frac{d-1}{2}\rfloor}z_iz_{d+1-i}\right)+\left\lfloor\frac{d-1}{2}\right\rfloor z_{\lfloor\frac{d+1}{2}\rfloor}z_{\lceil\frac{d+1}{2}\rceil}=0\,.
\end{equation}
The symmetric matrix $M_d$ associated with $Q_d$ has two diagonal blocks: the $1\times 1$ diagonal block with entry $1$, and the $d\times d$ diagonal block that is an anti-diagonal matrix. The vectors $e_0=(1,0,\ldots,0)$ and $e_1=(0,1,\ldots,0)$ are eigenvectors of $M_d$ with eigenvalues $1$ and $-\frac{1}{2}$, respectively. This means that $M_d$ is not positive definite. 
Plugging in the relations $z_i=x_0^{d-i}x_1^i$ in \eqref{eq: equation Q_d}, we get
\begin{align}
\begin{split}
0 &= x_0^{2d}-\left(\sum_{i=1}^{\lfloor\frac{d-1}{2}\rfloor}x_0^{d-i}x_1^ix_0^{i-1}x_1^{d+1-i}\right)+\left\lfloor\frac{d-1}{2}\right\rfloor x_0^{d-\lfloor\frac{d+1}{2}\rfloor}x_1^{\lfloor\frac{d+1}{2}\rfloor}x_0^{d-\lceil\frac{d+1}{2}\rceil}x_1^{\lceil\frac{d+1}{2}\rceil}\\
&= x_0^{2d}-\left\lfloor\frac{d-1}{2}\right\rfloor x_0^{d-1}x_1^{d+1}+\left\lfloor\frac{d-1}{2}\right\rfloor x_0^{d-1}x_1^{d+1} = x_0^{2d}\,, 
\end{split}
\end{align}
where we used the identity $\lfloor\frac{d+1}{2}\rfloor+\lceil\frac{d+1}{2}\rceil=d+1$.
Hence, $\sV_{d,1}\cap\sQ_d$ consists of the point $\nu_d([0,1])$ with multiplicity $2d$ and $\EDef_{Q_d}(\sV_{d,1})=2d-1$.
\end{remark}

\subsection{Veronese surfaces}\label{subsec: ED defects Veronese surfaces}

Consider the Veronese surface $\sV_{d,2}\subset\PP(S^d\C^3)\cong\PP^{\binom{d+2}{2}-1}_{\mC}$. By Theorem \ref{thm: gen ED degree Segre-Veronese} (see also \cite[Proposition 7.10]{DHOST} for arbitrary Veronese varieties) we have
\begin{equation}
\gEDeg(\sV_{d,2})=7d^2-9d+3\,.
\end{equation}
Let $Q$ be an inner product on $S^d\R^3$, whose associated complex quadric hypersurface in $\PP(S^d\C^3)$ is $\sQ$.
The intersection $\sV_{d,2}\cap\sQ$ has degree $2d^2$ and can be written as the image of a plane curve $ \sC$ of degree $2d$ under the Veronese embedding $\nu_d$. Here we used the property that, for any curve $\sC\subset\PP^2_{\mC}$ of degree $k$, $\deg(\nu_d(\sC)) = dk$. Hence, using Theorem \ref{thm: ED defect general} and its corollaries, the classification of the possible ED defects $\EDef_Q(\sV_{d,2})$ corresponds to the classification of all plane curves of degree $2d$. 

\begin{example}[Second Veronese Surface]
We focus on the case $d=2$, hence $\sV_{2,2}$ is the second Veronese surface in $\PP^5_{\mC}$.
The intersection $\sV_{2,2}\cap\sQ$ has degree $8$ and can be written as $\nu_2(\sC)$ for some plane curve $\sC$ of degree $4$.
If $Q=Q_F$ is a Frobenius inner product, the curve is special and consists of a double conic $2\sD$. Thus, Corollary \ref{cor: ED defect equisingular} implies
\begin{equation}\label{eq: ED defect V22}
\EDef_{Q_F}(\sV_{2,2}) = \gEDeg(\nu_2(\sD)) = 3\cdot4-2 = 10\,.
\end{equation}
Now assume that $Q$ is such that $\sC=\sV_{2,2}\cap\sQ$ is reduced. The total sum of the Milnor numbers of its singularities is at most $7$, unless $\sC$ is the union of four concurrent lines, with only one singularity with $\mu = 9$, see \cite{shin-bound}.
Due to Corollary \ref{cor: ED defect isolated singularities}, we have $\EDef_{Q_F}(\sV_{2,2}) \leq 9$ in this case.
Next, we deal with the other nonreduced cases. Below, $\sD$ denotes a plane conic, while $\sL$ and $\sL_i$ are lines.
\begin{enumerate}
 \item $\sC = \sD \cup 2\sL$, with $\sD\cap\sL = \{P_1,P_2\}$. Then $\mathrm{Sing}(\sC)$ has three strata: $\sS_0 = \sL\setminus\{P_1,P_2\}$ and $\sS_i = \{P_i\}$, $i=1,2$. The Milnor fiber $F_{\sS_0}$ around any point of $\sS_0$ is homotopic to $\{x^2 = 1\}$, hence $\chi(F_{\sS_0}) = 2$ and $\mu_{\sS_0} = 1$. Thus $\alpha_{\sS_0} = 1$. For $\sS_1$ or $\sS_2$ the Milnor fiber is homotopic to $\{x^2y = 1\}$, hence $\mu_{\sS_1} = \mu_{\sS_2} = -1$. The complex link in this case is just a point (see \cite[p.492]{rod-wang-maxlike} for the definition) thus $\alpha_{\sS_i} = \mu_{\sS_i} - \mu_{\sS_0} = -2$. Therefore,
 \[
 \EDef_{Q}(\sV_{2,2}) = 4\alpha_{\sS_0} - \alpha_{\sS_1} -\alpha_{\sS_2} = 4 -(-2) -(-2) = 8\,.
 \] 
 \item $\sC=\sD\cup 2\sL$, with $\sD\cap\sL = \{2P\}$, a tangent line. Then $\mathrm{Sing}(\sC)$ has two strata: $\sS_0 = \sL\setminus\{P\}$ and $\sS_1 = \{P\}$. As in the previous case $\alpha_{\sS_0} = 1$. For $\sS_1$, we may use the formula in \cite{melle-euler} to compute $\chi(F_{\sS_1}) = - 2\cdot3 + 3 = -3$ hence $\mu_{\sS_1} = -4$ and $\alpha_{\sS_1} = -4 - 1 = -5$. Therefore,
 \[
 \EDef_{Q}(\sV_{2,2}) = 4\alpha_{\sS_0} - \alpha_{\sS_1} = 4 -(-5) = 9\,.
 \] 
 \item $\sC= \sL_1 \cup \sL_2 \cup 2\sL_3$ with $\sL_1\cap \sL_2 \cap \sL_3 = \emptyset$. This is similar to case $(1)$ but with an extra isolated singularity $A_1$. Therefore,
 \[
 \EDef_{Q}(\sV_{2,2}) = 8 + 1 = 9\,.
 \] 
 \item $\sC= \sL_1 \cup \sL_2 \cup 2\sL_3$ with $\sL_1\cap \sL_2 \cap \sL_3 = \{P\}$. We have two strata: $\sS_0 = \sL_3\setminus \{P\}$ and $\sS_1 = \{P\}$. Moreover, $\alpha_{\sS_0} = 1$ and $\chi(F_{\sS_1}) = - 4$. Therefore,
 \[
 \EDef_{Q}(\sV_{2,2}) = 4\alpha_{\sS_0} - \alpha_{\sS_1} = 4 -(-5 -1) = 10\,.
 \]
 \item $\sC= 2\sL_1 \cup 2\sL_2 $ with $\sL_1\cap \sL_2 = \{P\}$. We have three strata: $\sS_0 = \sL_1 \setminus \{P\}$, $\sS_1 = \sL_2 \setminus \{P\}$, and $\sS_2 = \{P\}$. As in previous cases, $\alpha_{\sS_0} = \alpha_{\sS_1} = 1$. For $\sS_2$ we get $\chi(F_{\sS_2}) = 0$ hence $\alpha_{\sS_2} = -1 - \alpha_{\sS_0} - \alpha_{\sS_1} = -3$. Then
 \[
 \EDef_{Q}(\sV_{2,2}) = 4\alpha_{\sS_0} + 4\alpha_{\sS_1} - \alpha_{\sS_2} = 4+ 4 -(-3) = 11\,.
 \]
 \item $\sC= 3\sL_1 \cup \sL_2 $ with $\sL_1\cap \sL_2 = \{P\}$. We have two strata: $\sS_0 = \sL_1\setminus \{P\}$ and $\sS_2 = \{P\}$. Now $\alpha_{\sS_0} = 2$ and $\mu_{\sS_1} = -1$. Then
 \[
 \EDef_{Q}(\sV_{2,2}) = 4\alpha_{\sS_0} - \alpha_1 = 4\cdot 2 - ( -1 - 2) = 11\,.
 \]
 \item $\sC= 4\sL$. Then we may apply Corollary \ref{cor: ED defect equisingular} to get
 \[
 \EDef_{Q}(\sV_{2,2}) = 3\gEDeg(\nu_2(\sL)) = 12\,.
 \]
\end{enumerate}
From Theorem \ref{thm: symmetric_matrices} the maximal ED defect coming from a real positive definite inner product is $10$, as we computed in \eqref{eq: ED defect V22}. Equivalently, the minimum ED degree is $\EDeg_{Q_F}(\sV_{2,2})=3$. Thus, cases (5), (6), and (7) above cannot be realized by a definite bilinear form $Q$. In fact, none of these cases can be realized by positive quadrics. Indeed, note that the Veronese surface $\sX$ is defined over $\R$ and so is $\sC= \sV_{2,2}\cap\sQ$, whenever $\sQ$ is real. If $Q$ is a definite quadric, then it has no real (closed) points, hence neither does $\sC$. However, every curve above has real (closed) points, as it follows from the lemma below.\hfill$\diamondsuit$
\end{example}

\begin{lemma}\label{lem: odd has real points}
Let $\sC\subset\PP^2_{\mC}$ be a projective plane curve defined over $\R$. If either $\sC$ contains an odd degree subcurve (over $\C$) or $\sC$ has an odd number of isolated singular points, then $\sC$ contains a real point, i.e., $C^\mR \neq \emptyset$.
\end{lemma}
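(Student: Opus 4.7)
Both claims are instances of the general Galois-descent principle that complex conjugation acts on $\sC$ (since $\sC$ is defined over $\R$), and on any $\R$-invariant finite subset non-real points come in conjugate pairs $\{P,\overline{P}\}$; thus any odd cardinality (or odd degree) forces the existence of a real fixed point. I would first dispose of the singular-points case: the set of isolated singularities of $\sC$ is invariant under conjugation, and non-real isolated singularities form orbits of size two. If the total count is odd, at least one isolated singularity must satisfy $P=\overline{P}$, hence $P\in\sC^\mR$.

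For the odd-degree-subcurve case, suppose $\sD\leq\sC$ is an effective subdivisor of odd degree. Writing $\sD=\sum m_i \sD_i$ as a sum of distinct irreducible components with positive multiplicities, the oddness of $\sum m_i \deg(\sD_i)$ forces at least one product $m_i \deg(\sD_i)$ to be odd, so there is an irreducible component $\sD_0$ of $\sC$ of odd degree $d$. I would then split into two subcases according to whether $\sD_0$ is fixed by complex conjugation. If $\overline{\sD_0}=\sD_0$, then $\sD_0$ is defined over $\R$ with a real defining equation $F\in\R[x_0,x_1,x_2]_d$ of odd degree; restricting $F$ to any real line $\ell\cong\PP^1_{\mR}$ not contained in $\sD_0$ yields a real polynomial of odd degree in one variable, which has a real root, producing a point of $\sD_0(\R)\subseteq\sC^\mR$. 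Otherwise, $\sD_0$ and $\overline{\sD_0}$ are two distinct irreducible components of $\sC$, so by Bezout their scheme-theoretic intersection $\sD_0\cap\overline{\sD_0}$ is a zero-dimensional subscheme of $\PP^2$ of length $d^2$, an odd number. This intersection is $\R$-invariant, and intersection multiplicities at conjugate non-real points coincide, so the non-real points contribute an even total to $d^2$; hence at least one point of $\sD_0\cap\overline{\sD_0}\subseteq\sC$ must be real.

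No genuine obstacle arises beyond carefully interpreting ``subcurve'' as a subdivisor (equivalently, as a union of irreducible components of $\sC$ with multiplicity), after which the argument is a short exercise combining Galois descent, Bezout's theorem, and the intermediate value theorem for odd-degree real univariate polynomials.
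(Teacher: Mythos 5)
Your proof is correct and follows essentially the same strategy as the paper's: Galois descent (conjugation acts on components and on the singular locus, non-real orbits pair up), Bézout for the case of distinct conjugate components, and the odd-degree real form argument for the self-conjugate component. The only cosmetic differences are that you count isolated singularities set-theoretically while the paper works with the length of the singular scheme, and you invoke the intermediate-value/odd-degree-form fact where the paper uniformly applies the principle that an odd-length zero-dimensional $\R$-scheme has a real point; neither affects correctness.
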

To some extent, the idea behind this statement is that ``a real one-variable polynomial of odd degree has at least one real root''.
\begin{proof}
Denote by $\bar{z}$ the complex conjugate of $z\in\C$. Then $z\mapsto\bar{z}$ generates the Galois group $\mathrm{Gal}(\C/\R)$. For any scheme $\sX$ defined over $\R$, $\mathrm{Gal}(\C/\R)$ acts on the irreducible components of the associated complex scheme $\sX^\mC$ and the orbits are defined over $\R$, i.e., if $T$ is an irreducible component of $\sX^\mC$, then there exists another component $\overline{T} \cong T$ such that $T\cup \overline{T}$ is defined over $\R$. In particular, if $T$ is fixed by $\mathrm{Gal}(\C/\R)$, then $T$ is defined over $\R$. For a proof of this fact in more generality, see \cite[\href{https://stacks.math.columbia.edu/tag/04KY}{Lemma 04KY}]{stacks-project}. In particular, if $\sX$ is zero-dimensional of odd length, the action of $\mathrm{Gal}(\C/\R)$ has fixed points, hence $\sX$ has real (closed) points. 

In our case, let $\sC$ be a curve defined over $\R$. Assume, without loss of generality, that $\sC=\sC_{\rm red}$ is reduced. Note that $\mathrm{Sing}(\sC)$ is also defined over $\R$. By our previous discussion, if $\mathrm{Sing}(\sC)$ has odd length, then it has real points, which are contained in $\sC$.
 
Now let $\sC=\sC_1\cup\cdots\cup\sC_k$ be a decomposition of $\sC$ into irreducible components. Suppose that $\deg(\sC_i) \equiv 1 \pmod{2}$ for some $i\in[k]$. Then $\sC_i \cup \overline{\sC_i} \subset \sC$ is defined over $\R$. If $\sC_i = \overline{\sC_i}$, i.e., $\sC_i$ is defined by a real homogeneous polynomial, then take $\sL\subset\PP^2$ a general line, then take $\sX = \sL\cap\sC_i$. If $\sC_i \neq \overline{\sC_i}$ then take $\sX = \sC_i \cap \overline{\sC_i}$. In either case, $\overline{\sX} = \sX$ is defined over $\R$, and it has odd length, due to B\'ezout's Theorem. Thus $\sX$ has real points, hence so does $\sC$.
\end{proof}

\begin{lemma}\label{lem:defVerSurf}
For all $d\ge 2$, consider the Veronese surface $\sV_{d,2}\subset\PP(S^d\C^3)\cong\PP^{\binom{d+2}{2}-1}_{\mC}$.
Let $Q$ be a symmetric bilinear form on $S^d\R^3$ such that $\EDef_{Q}(\sV_{d,2}) \ge \EDef_{Q_F}(\sV_{d,2})$, where $Q_F$ is a Frobenius inner product on $S^d\R^3$. Given $\sQ=\V(Q)$, then the curve $\sQ\cap\sV_{d,2}$ is nonreduced.
\end{lemma}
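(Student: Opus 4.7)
The plan is to derive a numerical contradiction. I would compute $\EDef_{Q_F}(\sV_{d,2})$ explicitly via Corollary \ref{cor: ED defect equisingular}, and separately bound $\EDef_Q(\sV_{d,2})$ from above under the hypothesis that $\sC\coloneqq\sQ\cap\sV_{d,2}$ is reduced, using the classical Milnor-number bound for reduced plane curves.

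For the first step, after choosing coordinates so that $Q_F$ is induced by the standard inner product on $\R^3$, the pullback $\nu_d^{\ast}Q_F$ equals $(x_0^2+x_1^2+x_2^2)^d$. Hence $\nu_d^{-1}(\sV_{d,2}\cap\sQ_F)$ is the smooth conic $\sD=\V(x_0^2+x_1^2+x_2^2)\subset\PP^2$ taken with multiplicity $d$, so $\sV_{d,2}\cap\sQ_F$ is equisingular along the smooth rational curve $\sZ\coloneqq\nu_d(\sD)$ with transverse singularity of type $\{t^d=0\}$ and Milnor number $d-1$. Since $\sZ$ is a smooth rational curve of degree $2d$, Theorem \ref{thm: ED degree Chern classes} gives $\gEDeg(\sZ)=6d-2$, and Corollary \ref{cor: ED defect equisingular} yields
\begin{equation*}
\EDef_{Q_F}(\sV_{d,2})=(d-1)(6d-2)=6d^2-8d+2.
\end{equation*}

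For the second step, assuming $\sC$ is reduced, $\nu_d^{-1}(\sC)\subset\PP^2$ is a reduced plane curve of degree $2d$ whose singularities are isolated and analytically equivalent (via $\nu_d$) to those of $\sC$. Corollary \ref{cor: ED defect isolated singularities} then gives $\EDef_Q(\sV_{d,2})=\sum_{P\in\mathrm{Sing}(\sC)}\mu_P$, and I would invoke the classical bound $\sum_P\mu_P\le(n-1)^2$ for a reduced plane curve of degree $n$ (sharply attained by $n$ concurrent lines; for $n=4$ this is proved in \cite{shin-bound}). With $n=2d$ this gives $\EDef_Q(\sV_{d,2})\le(2d-1)^2=4d^2-4d+1$. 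Combining the two estimates,
\begin{equation*}
(d-1)(6d-2)-(2d-1)^2=2d^2-4d+1=2(d-1)^2-1\ge 1\qquad\text{for } d\ge 2,
\end{equation*}
so $\EDef_Q(\sV_{d,2})<\EDef_{Q_F}(\sV_{d,2})$, contradicting the hypothesis; hence $\sC$ must be nonreduced.

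The main obstacle is the correct formulation and citation of the sharp Milnor bound $\sum_P\mu_P\le(n-1)^2$ for reduced plane curves of degree $n$, which is the only nontrivial analytic input. The remaining ingredients — the computation of $\nu_d^{\ast}Q_F$, the equisingular structure of $\sV_{d,2}\cap\sQ_F$, and the generic ED degree of a rational normal curve — are routine once the Frobenius quadric has been diagonalized.
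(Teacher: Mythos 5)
Your proposal is correct and follows essentially the same route as the paper: prove the contrapositive by computing $\EDef_{Q_F}(\sV_{d,2})=(d-1)(6d-2)$ via Corollary \ref{cor: ED defect equisingular} on the double-conic structure, bound $\EDef_Q(\sV_{d,2})\le(2d-1)^2$ for reduced $\sC$ via Corollary \ref{cor: ED defect isolated singularities} and the Milnor-number bound of \cite{shin-bound}, and compare the two. The only minor difference is cosmetic: the paper cites \cite{shin-bound} as the source of the bound $\sum_P\mu_P\le(n-1)^2$ for all degrees $n$, not merely $n=4$ as your parenthetical suggests.
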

\begin{proof}
We show the contrapositive. For any quadric hypersurface $\sQ\subset\PP^{\binom{d+2}{2}-1}_{\mC}$ we have that $\sQ\cap\sV_{d,2} = \nu_d(\sC)$, where $\sC\subset\PP^2_{\mC}$ is a plane curve of degree $2d$. If $\sQ = \sQ_F$ is a Frobenius quadric, then $\sC= d\sD$, where $\sD$ is a smooth conic. It follows from Corollary \ref{cor: ED defect equisingular} that
\[
\EDef_{Q_F}(\sV_{d,2}) = (d-1)\gEDeg(\nu_d(\sD)) = (d-1)(6d-2)\,.
\]
Now consider a quadric $\sQ$ such that $\sC$ is reduced. Then Theorem \ref{cor: ED defect isolated singularities} says that the ED defect is the sum of the Milnor numbers of $\sC$, and a bound for this number was proved in \cite{shin-bound}. Hence,
\[
\EDef_{Q_F}(\sV_{d,2}) = \sum_{P \in \sC} \mu_P \leq (2d -1)^2\,.
\]
It is clear that $(2d -1)^2 < (d-1)(6d-2)$ for $d\ge 2$, concluding the proof.
\end{proof}

\begin{corollary}\label{cor:3-rd Veronese}
Conjecture \ref{conj: main} holds for the 3-rd Veronese surface $\sV_{3,2}\subset\PP(S^3\C^3)$, namely
\begin{equation}
 \min_{Q\in S_+^2(S^3\R^3)}\Phi_{\sV_{3,2}}(Q) = \EDeg_{Q_F}(\sV_{3,2})= 7\,.
\end{equation}
\end{corollary}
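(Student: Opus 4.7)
The plan is to combine Lemma \ref{lem:defVerSurf} (which forces $\sC := \nu_3^{-1}(\sV_{3,2}\cap\sQ)$ to be non-reduced whenever $\EDef_Q(\sV_{3,2}) \ge \EDef_{Q_F}(\sV_{3,2})$) with Lemma \ref{lem: odd has real points} (which excludes odd-degree irreducible components of $\sC$, since $\sQ^\R = \emptyset$ for positive definite $Q$ implies $\sC^\R = \emptyset$) to reduce to a short list of admissible plane sextics, and then to compute the ED defect in each surviving case via Corollary \ref{cor: ED defect equisingular} or Theorem \ref{thm: ED defect general}. From $\gEDeg(\sV_{3,2}) = 7\cdot 9 - 9\cdot 3 + 3 = 39$ and $\EDef_{Q_F}(\sV_{3,2}) = (d-1)(6d-2)|_{d=3} = 32$ (shown in the proof of Lemma \ref{lem:defVerSurf}), we already have $\EDeg_{Q_F}(\sV_{3,2}) = 7$; the corollary thus reduces to establishing $\EDef_Q(\sV_{3,2}) \le 32$ for every $Q \in S^2_+(S^3\R^3)$.

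The classification step will factor the defining sextic $F$ of $\sC$ over $\C$: Galois-invariance forces any irreducible factor of distinguished multiplicity to be real, and Lemma \ref{lem: odd has real points} forces every factor to have even degree, so the only admissible non-reduced configurations are \emph{Case A:} $\sC = 3\sD$ with $\sD$ a smooth real conic satisfying $\sD^\R = \emptyset$, and \emph{Case B:} $\sC = 2\sD + \sD'$ with $\sD \ne \sD'$ distinct smooth real conics each having empty real locus. In Case B, Bezout gives $\deg(\sD\cap\sD') = 4$, and Galois-invariance together with $\sD^\R = \emptyset$ forbids any Galois-fixed intersection point, restricting the intersection partition to either $1{+}1{+}1{+}1$ (four transverse points in two conjugate pairs) or $2{+}2$ (two simple tangencies forming one conjugate pair).

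In Case A, Corollary \ref{cor: ED defect equisingular} applies with $\sZ = \sD$ smooth and connected, transversal Milnor number $\mu(y^3) = 2$, and $\gEDeg(\nu_3(\sD)) = 16$ (a rational normal sextic, via the formula $3d-2$), yielding $\EDef_Q(\sV_{3,2}) = 2\cdot 16 = 32$ and $\EDeg_Q(\sV_{3,2}) = 7$. In Case B, Theorem \ref{thm: ED defect general} applies to the Whitney stratification of $\mathrm{Sing}(\sC) = \sD$ into the open stratum $\sS_0 := \sD \setminus (\sD\cap\sD')$ (codimension $0$ in $\sX\cap\sQ$) and the point strata $\sS_j := \{P_j\}$ (codimension $1$). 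One has $\alpha_{\sS_0} = \mu_{\sS_0} = 1$, and since $\sD$ is smooth at every $P_j$ the complex link $L_{\sS_j,\sS_0}$ is a single point so $\chi_c(L_{\sS_j,\sS_0}) = 1$ and $\alpha_{\sS_j} = \mu_{\sS_j} - 1$. In the transverse sub-case the local analytic model at $P_j$ is $xy^2 = 0$, whose Milnor fibre is homotopy equivalent to $S^1$, giving $\mu_{\sS_j} = -1$, $\alpha_{\sS_j} = -2$, and $\EDef_Q(\sV_{3,2}) = 16 - 4\cdot(-2) = 24$. In the tangent sub-case the local model is $y^2(y+x^2) = 0$, whose Milnor fibre is a $3$-sheeted cover of the $x$-disk branched at the six roots of the discriminant $\delta(4x^6 - 27\delta)$; by Riemann-Hurwitz this fibre has $\chi = 3 - 6 = -3$, so $\mu_{\sS_j} = -4$, $\alpha_{\sS_j} = -5$, and $\EDef_Q(\sV_{3,2}) = 16 + 2\cdot 5 = 26$. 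In either sub-case $\EDef_Q(\sV_{3,2}) \le 26 < 32$, and combining with Case A yields the minimum ED degree $7$, attained exactly on quadrics of Case A type, including the Frobenius isotropic quadric.

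The main obstacle will be the stratified-Milnor computation in Case B, specifically the correct identification of the local analytic models $xy^2$ and $y^2(y+x^2)$ at transverse and tangent intersection points, and the Riemann-Hurwitz count for the non-isolated singularity at a tangency, after checking that smoothness of $\sD$ at each $P_j$ forces $\chi_c(L_{\sS_j,\sS_0}) = 1$. The remainder of the argument reduces to these explicit local calculations, in the same spirit as the exhaustive $\sV_{2,2}$ case study presented earlier in the section.
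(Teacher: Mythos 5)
Your proposal is correct, and in fact it is \emph{more careful} than the paper's own proof. The paper's argument runs exactly as you begin: Lemma~\ref{lem:defVerSurf} forces $\sC$ to be nonreduced, Lemma~\ref{lem: odd has real points} forces all irreducible components of $\sC$ to have even degree, and the only partition of $6$ into even parts that contains a repeated number is $\{2,2,2\}$. The paper then asserts ``i.e., $\sC$ is a triple conic,'' but this does not follow: the multiset $\{2,2,2\}$ with a repetition is realized both by $3\sD$ and by $2\sD_1 + \sD_2$ with $\sD_1 \neq \sD_2$, and neither Lemma~\ref{lem: odd has real points} nor Lemma~\ref{lem:defVerSurf} excludes the latter (it is nonreduced, has no odd-degree subcurves, and $\mathrm{Sing}(\sC)=\sD_1$ is a curve, so there are no isolated singular points and the parity hypothesis of Lemma~\ref{lem: odd has real points} is vacuous). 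Your Case~B handles exactly this omitted configuration, and your computations there check out: the Galois-equivariance argument correctly restricts the intersection type of $\sD\cap\sD'$ to $1{+}1{+}1{+}1$ or $2{+}2$; the transverse local model $xy^2$ has Milnor fibre $\cong\C^*$ so $\mu=-1$, $\alpha=-2$, giving $\EDef_Q = 16 - 4(-2) = 24$; the tangent local model $y^2(y+x^2)$ has a $3$-sheeted Milnor fibre branched at the six simple roots of $\delta(4x^6-27\delta)$, hence $\chi=-3$, $\mu=-4$, $\alpha=-5$, giving $\EDef_Q=16+2\cdot 5 = 26$. Both are strictly below $\EDef_{Q_F}=32$, so Case~B is incompatible with the standing hypothesis $\EDef_Q \ge \EDef_{Q_F}$, and the conclusion follows from Case~A alone exactly as in the paper. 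In short, you and the paper share the same strategy (restrict $\sC$ via Lemmas~\ref{lem: odd has real points}--\ref{lem:defVerSurf}, then apply the Maxim--Rodriguez--Wang defect formulas), but your proposal closes a genuine gap by disposing of the $2\sD+\sD'$ case via the same kind of stratified Milnor-number bookkeeping already used in Example~5.6.
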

\begin{proof}
Let $Q$ be a quadric such that $\EDef_{Q}(\sV_{3,2}) \ge \EDef_{Q_F}(\sV_{3,2})$, and let $\sC$ be the preimage of $\sV_{3,2} \cap \sQ$ under the Veronese embedding $\nu_3$. In particular $\deg(\sC)=6$. Let $F$ be a homogeneous polynomial defining $\sC\subset\PP^2_{\mC}$ and let $F = F_1^{\beta_1} \cdots F_k^{\beta_k}$ be the factorization of $F$ into irreducible components. Then we associate to $\sC$ a partition of $6$ by putting $n_i$ times $\deg (F_i)$. From Lemmas \ref{lem: odd has real points} and \ref{lem:defVerSurf} we know that the admissible partitions contain only even numbers and have repetitions. This is enough to conclude that the partition associated with $\sC$ is $\{2,2,2\}$, i.e., $\sC$ is a triple conic. Therefore $\EDef_{Q}(\sV_{3,2}) = \EDef_{Q_F}(\sV_{3,2})$.
\end{proof}

\begin{remark}
The same approach applied to the $4$-th Veronese surface $\sV_{4,2}$ yields the admissible partitions $\{4,4\}$, $\{4,2,2\}$, and $\{2,2,2,2\}$. The last case is associated with a Frobenius inner product. However, a bound for the ED defect in the first two cases still eludes us.
\end{remark}

\section{Nontransversality conditions and leading coefficient of the ED polynomial}\label{sec: ED polynomial}

In Section \ref{subsec: ED degree ED polynomial} we introduced the extended ED polynomial of a projective variety. We used it to prove that the ED degree map $\Phi_\sX$ in \eqref{eq: ED degree map} is lower-semicontinuous. In this section, we investigate more on the polynomial coefficients $p_i(u,Q)$ in the expansion of the extended ED polynomial given in \eqref{eq: write extended ED polynomial explicitly}. The starting point is the following result.

\begin{proposition}\label{prop: degree and leadcoef extended ED polynomial}
Consider a variety $\sX\subset\PP(V)=\PP^N$ and the expansion \eqref{eq: write extended ED polynomial explicitly} of the extended ED polynomial of $\sX$. The $\varepsilon^2$-degree $D$ in $\mathrm{EDpoly}_{\sX}(u,Q,\varepsilon^2)$ equals $\gEDeg(\sX)$. Furthermore, the leading coefficient $p_{\gEDeg(\sX)}(u,Q)=p_{\gEDeg(\sX)}(Q)\in\C[Q]$ depends only on $Q$.
\end{proposition}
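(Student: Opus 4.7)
My plan is to handle the two assertions separately.

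For $D = \gEDeg(\sX)$, write $\mathrm{EDpoly}_\sX(u, Q, \varepsilon^2) = \sum_{i \ge 0} p_i(u, Q)\,\varepsilon^{2i}$ and specialize at any nondegenerate $\widetilde{Q} \in \sU$. By Proposition~\ref{prop: epsilon-degree ED polynomial}, the $\varepsilon^2$-degree of the specialization equals $\EDeg_{\widetilde{Q}}(\sX) \le \gEDeg(\sX)$. Consequently $p_i(u, \widetilde{Q}) \equiv 0$ in $\C[u]$ for every $\widetilde{Q} \in \sU$ whenever $i > \gEDeg(\sX)$; by Zariski density of $\sU$ in $S^2 V$, this forces $p_i \equiv 0$ in $\C[u, Q]$. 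Conversely, for $\widetilde{Q}$ generic in the sense of Theorem~\ref{thm: ED degree sum polar classes}, we have $\EDeg_{\widetilde{Q}}(\sX) = \gEDeg(\sX)$, so Proposition~\ref{prop: epsilon-degree ED polynomial} yields that $p_{\gEDeg(\sX)}(u, \widetilde{Q})$ is a nonzero element of $\C[u]$; hence $p_{\gEDeg(\sX)} \neq 0$ in $\C[u, Q]$. Combining, $D = \gEDeg(\sX)$.

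For the leading coefficient, the same Proposition~\ref{prop: epsilon-degree ED polynomial} shows that $p_D(u, \widetilde{Q}) \equiv 0$ in $u$ whenever $\EDeg_{\widetilde{Q}}(\sX) < D$. Thus $p_D$ vanishes on the cylinder $V \times S$, where $S \subset S^2 V$ is the Zariski closure of $\{Q \in \sU \mid \EDeg_Q(\sX) < D\}$. It then suffices to prove that for every $\widetilde{Q} \in \sU \setminus S$ the polynomial $p_D(u, \widetilde{Q}) \in \C[u]$ is a nonzero \emph{constant} in $u$. Once granted, the condition ``$p_D(\cdot, Q)$ is a constant in $u$'' is a Zariski closed condition on $Q$ (the simultaneous vanishing of all $u$-partial derivatives of $p_D$) that holds on the dense open subset $\sU \setminus S$ of $S^2V$, hence holds everywhere, and $p_D \in \C[Q]$.

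The main obstacle is this last reduction. Geometrically, using the factorization $\mathrm{EDpoly}_\sX(u, Q, \varepsilon^2) = p_D(u, Q) \prod_{i=1}^D (\varepsilon^2 - \varepsilon_i^2(u, Q))$ over $\C(u, Q)$, a vanishing $p_D(u_0, \widetilde{Q}) = 0$ forces one of the critical values $\varepsilon_i^2 = q(u_0 - x_i)$ to escape to infinity and, with it, the corresponding critical point $x_i$ in $C\sX$. My plan to rule this out for $\widetilde{Q}$ outside $S$ is to compactify in $\PP^N \times \PP^N$ and argue that a critical point escaping to infinity degenerates projectively to a point $x_\infty \in \sX \cap \sQ_{\widetilde{Q}}$ whose conormal compatibility restricts $u_0$ to a proper affine subspace depending only on $x_\infty$; for $\widetilde{Q}$ such that $\sX$ and $\sQ_{\widetilde{Q}}$ meet transversally (a Zariski open condition, cf.~Theorem~\ref{thm: ED degree sum polar classes}), this compatibility becomes empty uniformly in $u_0$. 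A cleaner alternative is to construct $\mathrm{EDpoly}_\sX$ as a resultant in $x$ of the system $\{f_j(x) = 0,\ Q(u - x, t_\ell(x)) = 0,\ Q(u - x, u - x) - \varepsilon^2 = 0\}$: since $\varepsilon^2$ appears only in the last equation with $\varepsilon^2$-coefficient $-1$ and the top-degree-in-$x$ parts of all equations involve only $Q$ (not $u$), a leading-term analysis of the resultant identifies the $\varepsilon^2$-leading coefficient as an element of $\C[Q]$. Making either route rigorous, especially handling the behavior at infinity uniformly in $u$, is the heart of the proof.
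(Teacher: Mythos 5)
Your proof of the first claim ($D = \gEDeg(\sX)$) is correct and takes essentially the same route as the paper: both rely on Proposition~\ref{prop: epsilon-degree ED polynomial} to identify the $\varepsilon^2$-degree at a specialized $\widetilde{Q}$ with $\EDeg_{\widetilde{Q}}(\sX)$, combined with the inequality $\EDeg_{\widetilde{Q}}(\sX)\le\gEDeg(\sX)$, the equality case for transversal $\widetilde{Q}$, and a Zariski-density argument. Your write-up is actually more detailed than the paper's one-line reference.

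For the second claim, however, there is a genuine gap. You correctly reduce the problem to showing that, for $\widetilde{Q}$ outside the nontransversality locus, $p_D(u,\widetilde{Q})$ is a nonzero constant in $u$, and you correctly observe that once this is known on a Zariski-dense open set, it propagates to all of $\C[u,Q]$ because ``$p_D(\cdot,Q)$ has all $u$-partials zero'' is a closed condition in $Q$. But the reduction step itself is exactly where the proof's content lies, and you do not complete it: both the compactification sketch and the resultant sketch are left explicitly unfinished (``Making either route rigorous\dots is the heart of the proof''). The paper does not prove this from scratch either; it cites \cite[Corollary 4.6]{ottaviani2020distance}, which asserts that when $\sX$ and the isotropic quadric $\widetilde{\sQ}$ meet transversally, each coefficient $p_i(u,\widetilde{Q})$ is homogeneous in $u$ of degree $2\gEDeg(\sX)-2i$; the case $i=\gEDeg(\sX)$ gives degree zero, i.e., a constant in $u$, and then the density argument finishes. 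So what is missing from your proposal is precisely this homogeneity lemma about the ED polynomial coefficients for a fixed transversal inner product. Carrying out your compactification idea (tracking critical points that escape to infinity and land on $\sX\cap\widetilde{\sQ}$, and showing that transversality forbids this uniformly in $u$) would amount to re-deriving that result, and as written the argument is only a plan, not a proof.
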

\begin{proof}
The first part of the statement is an immediate consequence of Proposition \ref{prop: epsilon-degree ED polynomial}. Regarding the second part, by \cite[Corollary 4.6]{ottaviani2020distance} for any choice of a nondegenerate bilinear form $\widetilde{Q}$, if $\sX$ and the isotropic quadric $\widetilde{\sQ}=\V(\widetilde{Q})$ intersect transversally, then $\EDeg_{\widetilde{Q}}(\sX)=\gEDeg(\sX)$ and each polynomial coefficient $p_i(u,\widetilde{Q})$ of $\varepsilon^{2i}$ in \eqref{eq: write extended ED polynomial explicitly} is a homogeneous polynomial in the coordinates of $u$ of degree $2\gEDeg(\sX)-2i$. In particular $p_{\gEDeg(\sX)}(u,\widetilde{Q})=p_{\gEDeg(\sX)}(\widetilde{Q})\in\C$. This implies that $p_{\gEDeg(\sX)}$ is a polynomial depending only on $Q$.
\end{proof}

The main consequence of Proposition \ref{prop: degree and leadcoef extended ED polynomial} is that, given a variety $\sX\subset\PP(V)$, the zero locus $\V(p_{\gEDeg(\sX)})\subset\PP(S^2V)$ describes those inner products $Q\in S^2V$ such that $\EDeg_Q(\sX)<\gEDeg(\sX)$. Via the following Proposition~\ref{prop: variety Q not transversal X irreducible} and Proposition~\ref{pro: nonzero leadcoeff ED polynomial Q symbolic}, our goal is to describe this variety and compute its degree. A careful study of this variety would help us understand better the locus of inner products that induce smaller ED degrees of $\sX$.

\begin{proposition}\label{prop: variety Q not transversal X irreducible}
Let $\sX\subset\PP^N$ be an irreducible variety of dimension $n$. For any $e\ge 1$, we consider the space $\PP(S^eV^*)$ that parameterizes all hypersurfaces in $\PP(V)$ of degree $e$. Define
\begin{equation}\label{eq: def Z d X}
\sZ_{e,\sX}\coloneqq\overline{\{\sY\in\PP(S^eV^*)\mid\text{$\exists\,x\in \sX_{\mathrm{sm}}\cap \sY_{\mathrm{sm}}$ such that $T_x\sY\supseteq T_x\sX$}\}}\,.
\end{equation}
The variety $\sZ_{e,\sX}$ coincides with the dual variety of the $e$-th Veronese embedding of $\sX_e\coloneqq\nu_e(\sX)\subset\PP(S^eV)$, in particular it is irreducible. Furthermore, $\sZ_{e,\sX}$ is always a hypersurface in $\PP(S^eV^*)$ if $e\ge 2$. The degree of $\sZ_{e,\sX}$ is
\begin{equation}\label{eq: degree dual Veronese X}
\sum_{j=0}^n(-1)^j(n+1-j)\,c_j^M(\sX)\cdot(eh)^{n-j}\,,
\end{equation}
where $h=c_1(\sO_\sX(1))$.
\end{proposition}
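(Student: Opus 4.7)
The strategy is to recognize $\sZ_{e,\sX}$ as the dual variety of the Veronese re-embedding $\sX_e = \nu_e(\sX) \subset \PP(S^eV)$ and then transfer the standard degree formula from $\sX_e$ back to $\sX$ via the isomorphism $\nu_e$.

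First, under the natural identification $\PP(S^eV^*) \cong (\PP(S^eV))^*$, a point $[f]$ represents simultaneously the hyperplane $H_f \subset \PP(S^eV)$ and the degree-$e$ hypersurface $\sY_f = \V(f) \subset \PP(V)$, with $\nu_e^{-1}(H_f) = \sY_f$. Since $\nu_e : \sX \to \sX_e$ is an isomorphism onto its image, $d\nu_e|_x$ gives $T_{\nu_e(x)}\sX_e = d\nu_e(T_x\sX)$ at any smooth point, so the chain rule yields $T_x \sY_f \supseteq T_x\sX$ if and only if $H_f \supseteq T_{\nu_e(x)}\sX_e$. Thus $\sZ_{e,\sX} = \sX_e^\vee$. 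Irreducibility of $\sZ_{e,\sX}$ then follows at once: $\sX_e$ is irreducible, its conormal variety $\sN_{\sX_e}$ is the closure of a projective bundle over $(\sX_e)_{\mathrm{sm}}$ and hence is irreducible, and $\sX_e^\vee$ is its image under the second projection.

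For the hypersurface claim when $e \ge 2$, one must show $\sX_e$ is dual nondefective. Consider the incidence variety
\[
\sI = \{(x, [f]) \in \sX_{\mathrm{sm}} \times \PP(S^eV^*) \mid f(x) = 0,\ df(x)|_{T_x\sX} = 0\}\,,
\]
which has dimension $\dim \PP(S^eV^*) - 1$, so the hypersurface claim amounts to the projection $\sI \to \sZ_{e,\sX}$ being generically finite. Using the explicit description $T_{x^e}C\sX_e = x^{e-1}\cdot T_xC\sX \subset S^eV$, the linearization of the defining equations of $\sI$ along a fiber over a generic $[f_0]$ involves the Hessian of $f_0$ restricted to $T_{x_0}\sX$: a tangent vector $\dot x \in T_{x_0}\sX$ to the fiber must satisfy $\mathrm{Hess}(f_0)(x_0)(\dot x, v) = 0$ for all $v \in T_{x_0}\sX$, modulo a correction coming from the second fundamental form of $\sX$ at $x_0$. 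For $e \ge 2$ the Hessian is a nontrivial degree-$(e-2)$ form, and for generic $f_0$ the resulting constraint has maximal rank, forcing $\dot x = 0$. This is essentially the classical characterization of dual defect through the second fundamental form (cf.~\cite[Chapter~1]{GKZ}), and yields $\mathrm{def}(\sX_e) = 0$.

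Finally, with $\sX_e$ dual nondefective, formula \eqref{eq: degdual}, extended to singular varieties via Chern-Mather classes \cite{piene1988cycles, aluffi2018projective}, gives
\[
\deg(\sX_e^\vee) = \sum_{j=0}^n (-1)^j (n+1-j)\, c_j^M(\sX_e)\cdot h_e^{n-j}\,,
\]
where $h_e = c_1(\sO_{\sX_e}(1))$. Since $\nu_e: \sX \xrightarrow{\sim} \sX_e$ is an isomorphism of abstract varieties, the Chern-Mather classes correspond under $\nu_e$ and $\nu_e^* h_e = eh$; hence $c_j^M(\sX_e) \cdot h_e^{n-j} = c_j^M(\sX) \cdot (eh)^{n-j}$, which yields the stated formula \eqref{eq: degree dual Veronese X}. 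The main obstacle in this plan is the middle step: establishing dual nondefectiveness of $\sX_e$ for $e \ge 2$ in the possibly singular setting. The Hessian argument is local and operates on the smooth locus, which is where defect is determined, so it goes through, but some care is needed to carry the second-fundamental-form analysis intrinsically through the Veronese re-embedding and to ensure the constraint is indeed nondegenerate for a generic $f_0$.
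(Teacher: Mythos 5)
Your identification $\sZ_{e,\sX}=\sX_e^\vee$ and the resulting irreducibility, as well as the final degree computation via Chern--Mather classes and $\nu_e^*h_e=eh$, coincide with the paper's argument. The genuine divergence is in the middle step, showing $\sX_e$ is dual nondefective when $e\ge 2$. The paper uses a short global argument: every subvariety of $\sV_{e,N}=\nu_e(\PP^N)$ pulls back to a subvariety of $\PP^N$ with degree multiplied by $e^{\dim}$, so $\sV_{e,N}$ (hence $\sX_e$) contains no linear subspaces when $e\ge 2$; combined with the contact-locus theorem (a positive dual defect forces linear subspaces through a general point), this immediately gives $\mathrm{def}(\sX_e)=0$. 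You instead attempt a local argument via the second fundamental form, showing the linearization of the contact conditions is injective for generic $f_0$. This is a legitimate alternative route, and it has the advantage of not invoking the contact-locus theorem in the possibly singular setting, but as written it leaves the crucial assertion --- that the combined form $\mathrm{Hess}(f_0)|_{T_{x_0}\sX} + df_0\circ\mathrm{II}$ is nondegenerate for generic $f_0$ --- unproved, and you acknowledge this. To close that gap you would want to observe that for $e\ge 2$ the map sending a form $f$, tangent to $\sX$ at $x_0$, to its $2$-jet at $x_0$ is surjective onto quadrics on $T_{x_0}\PP^N$; since the $\mathrm{II}$-correction term is fixed (independent of the Hessian) and depends only on $df_0$ in the normal directions, one can dominate it and make the total quadratic constraint nondegenerate. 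With that addition your plan is correct; as it stands the paper's route is tighter, while yours trades a global theorem for a more explicit local verification.
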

\begin{proof}
Every element $\sY\in\PP(S^eV^*)$ corresponds to a hyperplane $H_\sY\subset\PP(S^eV)$. In particular, if we consider the Veronese variety $\sV_{e,N}=\nu_e(\PP^N)\subset\PP(S^eV)$, then $\sY\cong\sV_{e,N}\cap H_\sY$. Now assume that $T_x\sX\subset T_x\sY$ for some point $x\in \sX_{\mathrm{sm}}\cap \sY_{\mathrm{sm}}$. This is equivalent to the condition that the hyperplane $H_\sY\subset\PP(S^eV)$ contains the tangent space $T_{\nu_e(x)}\sX_e$, namely $H$ belongs to the dual variety $\sX_e^\vee$ of $\sX_e$. By taking closures, the variety $\sZ_{e,\sX}$ is equal to $\sX_e^\vee$. Since we are assuming that $\sX$ is irreducible, $\sX_e$ is irreducible as well, and its dual variety $\sX_e^\vee$ too.

Now observe that any subvariety of $\sV_{e,N}$ is of the form $\sX_e$ for some variety $\sX\subset\PP^N$, and $\deg(\sX_e)=e^n\deg(\sX)$, in particular, it is larger than one when $e>1$. This means that $\sV_{e,N}$ does not contain linear subspaces for all $e\ge 2$, hence its dual variety $\sV_{e,N}^\vee$ is always a hypersurface. This fact is also an immediate consequence of Lemma \ref{lem: conditions for X mu dual hypersurface, in general}.
Similarly, for any subvariety $\sX\subset\PP^N$ and $e\ge 2$, the variety $\sX_e^\vee=\sZ_{e,\sX}$ is a hypersurface.

Now assume that $\sX$ is smooth. The first Chern class of the line bundle corresponding to the Veronese embedding of $\sX$ into $\PP(S^eV)$ is equal to $eh$. Using this fact and applying equation \eqref{eq: degdual}, we obtain the degree of $\sX_e^\vee$ in \eqref{eq: degree dual Veronese X} for all $e\ge 2$. A similar formula holds when $\sX$ is singular, after replacing the Chern classes with Chern-Mather classes.
\end{proof}

\begin{example}\label{ex: quartic surface}
Consider homogeneous coordinates $(z_{30},z_{21},z_{12},z_{03})$ for $\PP^3$ and the quartic surface $\sX\subset\PP^3$ defined by the equation
\begin{equation}
z_{21}^{2}z_{12}^{2}-4\,z_{30}z_{12}^{3}-4\,z_{21}^{3}z_{03}+18\,z_{30}z_{21}z_{12}z_{03}-27\,z_{30}^{2}z_{03}^{2}=0\,,
\end{equation}
which defines the discriminant variety of a cubic binary form.
The surface $\sX$ has dual defect $\mathrm{def}(\sX)=1$, as its dual variety is a rational normal cubic in $\PP^3$. Consider the smooth point $P=[0,1,0,0]$ of $\sX$. The tangent plane $T_P\sX$ has equation $z_{03}=0$, and its intersection with $\sX$ is scheme-theoretically the union of the double line $\V(z_{03},z_{12}^2)$ and the simple conic $\V(z_{03},z_{21}^2-z_{30}z_{12})$.
Furthermore, the conic $\V(z_{03},z_{21}^2-z_{30}z_{12})$ is tangent to the line $\V(z_{03},z_{12})$ at the point $[1,0,0,0]$, which belongs to $\sX_{\mathrm{sing}}$.
Therefore, $\mathrm{Cont}(T_P\sX,\sX)$ is the line $\V(z_{03},z_{12})$, confirming the fact that $\mathrm{def}(\sX)=1$. Now consider a quadric surface $\sQ\subset\PP^3$. Its general equation has the form
\begin{equation}
q_{30,30}z_{30}^2+q_{30,21}z_{30}z_{21}+\cdots+q_{03,03}z_{03}^2=0\,,
\end{equation}
where the coefficients $q_{30,30},q_{30,21},\ldots q_{03,03}$ are homogeneous coordinates of $\PP(S^2(\C^4)^*)\cong\PP^9$. The normal space of $\sQ$ at $P$ is generated by the vector $(q_{30,21},2q_{21,21},q_{12,21},q_{03,21})$. Therefore, $T_P\sQ=T_P\sX$ if and only if the previous vector is proportional to the vector $(0,0,0,1)$, namely if and only if $q_{30,21}=q_{21,21}=q_{12,21}=0$. These conditions define a projective subspace $W_P$ of $\PP(S^2(\C^4)^*)$ of dimension 6.
A generic element $\sQ\in W_P$ meets $\sX$ on an irreducible curve of degree $8$, and the reduced singular locus of $\sQ\cap\sX$ consists of $P$ and the six points of intersection between $\sQ$ and the rational normal cubic $\sX_{\mathrm{sing}}$.
All these considerations tell us that a generic element of $W_P$ is tangent to $\sX$ only at $P$. This confirms that the variety $\sZ_{2,\sX}=[\nu_2(\sX)]^\vee$ is a hypersurface in $\PP(S^2(\C^4)^*)$.
In order to compute its degree, note that $\delta_0(\sX)=0$, $\delta_1(\sX)=\deg(\sX^{\vee})=3$, and $\delta_2(\sX)=\deg(\sX)=4$. 
By \cite[Thm. $3$]{piene1988cycles} one expresses the Chern-Mather classes of $\sX$ via its polar classes (cf. \eqref{eq: Holme inverted}) as
\begin{align*}\label{eq: polar Chern-Mather}
c_i^M(\sX) &= \sum_{j=0}^i(-1)^{j}\binom{n+1-j}{i-j}\delta_{n-j}(\sX)\cdot h^{i-j}\,.
\end{align*}
Using this, we obtain that $c_0^M(\sX)=4$, $c_1^M(\sX)=9$, and $c_2^M(\sX)=6$. Then using \eqref{eq: degree dual Veronese X}, we get that
\begin{equation}
\deg(\sZ_{2,\sX})=3\,c_0^M(\sX)\deg((2h)^2)-2\,c_1^M(\sX)\deg(2h)+c_2^M(\sX)=3\cdot 4\cdot 4-2\cdot 9\cdot 2+6=18\,.
\end{equation}\hfill$\diamondsuit$
\end{example}

All the previous considerations lead to the following result.

\begin{proposition}\label{pro: nonzero leadcoeff ED polynomial Q symbolic}
Let $\sX$ be an irreducible projective variety in $\PP(V)$.
Consider the nonzero polynomial $p_{\gEDeg(\sX)}(Q)$ in \eqref{eq: write extended ED polynomial explicitly} and the vanishing locus $\V(p_{\gEDeg(\sX)})\subset\PP(S^2V)$. Then,
\begin{equation}\label{eq: zero locus leadcoef 1}
 \V(p_{\gEDeg(\sX)})=\PP(\{Q\in S^2V\mid\EDeg_Q(\sX)<\gEDeg(\sX)\})\,.
\end{equation}
In particular, $p_{\gEDeg(\sX)}(Q)$ has a positive degree in the entries of $M_Q$.
Furthermore, $\V(p_{\gEDeg(\sX)})$ is an irreducible hypersurface and corresponds to the dual variety $[\nu_2(\sX)]^\vee$ under the second Veronese embedding of $\PP(V)$ into $\PP(S^2V)$.
\end{proposition}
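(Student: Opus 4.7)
The plan is to combine the $\varepsilon^{2}$-degree count of the extended ED polynomial from Propositions \ref{prop: degree and leadcoef extended ED polynomial} and \ref{prop: epsilon-degree ED polynomial} with the description of $[\nu_{2}(\sX)]^{\vee}$ afforded by Proposition \ref{prop: variety Q not transversal X irreducible} applied at $e=2$.

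To establish the identity \eqref{eq: zero locus leadcoef 1}, I would fix an arbitrary nondegenerate $Q\in\sU$ and compare two facts: Proposition \ref{prop: degree and leadcoef extended ED polynomial} tells us that $D=\gEDeg(\sX)$ and that the top coefficient $p_{\gEDeg(\sX)}$ depends only on $Q$, while Proposition \ref{prop: epsilon-degree ED polynomial}, applied to the specialized polynomial $\mathrm{EDpoly}_{\sX}(u,Q,\varepsilon^{2})$, asserts that its $\varepsilon^{2}$-degree is exactly $\EDeg_{Q}(\sX)$ with nonzero leading $u$-coefficient. It follows that $p_{\gEDeg(\sX)}(Q)=0$ precisely when $\EDeg_{Q}(\sX)<\gEDeg(\sX)$, which is \eqref{eq: zero locus leadcoef 1} on the open stratum of nondegenerate forms; taking projective closure in $\PP(S^{2}V)$ then handles the degenerate locus.

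Next, I would match $\V(p_{\gEDeg(\sX)})$ with the dual variety $[\nu_{2}(\sX)]^{\vee}$. Proposition \ref{prop: variety Q not transversal X irreducible} at $e=2$ says that the locus $\sZ_{2,\sX}$ of quadrics $\sQ=\V(Q)$ sharing a tangent space with $\sX$ at some common smooth point is an irreducible hypersurface that coincides with $[\nu_{2}(\sX)]^{\vee}$. The remark following Theorem \ref{thm: ED degree sum polar classes} gives one inclusion: whenever $\sX$ and $\sQ$ meet transversely one has $\EDeg_{Q}(\sX)=\gEDeg(\sX)$, so $\V(p_{\gEDeg(\sX)})\subseteq \sZ_{2,\sX}$. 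For the reverse inclusion I would use that, for $Q$ generic in $\sZ_{2,\sX}$, the tangency with $\sX$ is simple and supported at a single smooth point $x\in\sX_{\mathrm{sm}}\cap\sQ_{\mathrm{sm}}$, producing an isolated ordinary node of $\sX\cap\sQ$ at $x$; Corollary \ref{cor: ED defect isolated singularities} then yields $\EDef_{Q}(\sX)\geq 1$, hence $Q\in\V(p_{\gEDeg(\sX)})$, and closedness of this locus extends the inclusion to all of $\sZ_{2,\sX}$. Combining the two inclusions and invoking Proposition \ref{prop: variety Q not transversal X irreducible} gives $\V(p_{\gEDeg(\sX)})=[\nu_{2}(\sX)]^{\vee}$ as irreducible hypersurfaces, and in particular forces $p_{\gEDeg(\sX)}$ to be nonconstant in the entries of $M_{Q}$.

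The most delicate step will be the reverse inclusion above: one must confirm that for a generic $Q\in\sZ_{2,\sX}$ the tangency really produces a singularity of $\sX\cap\sQ$ with strictly positive Milnor number, and that this contribution is not cancelled by other degeneracies at points of $\sX_{\mathrm{sing}}\cap\sQ$ or at secondary tangencies. A standard dimension count on the Gauss map of $\sX$, together with genericity of $Q$ within $\sZ_{2,\sX}$, should isolate the tangency at a single smooth node and guarantee a strict drop of the ED degree, so that the Milnor-number formula of Corollary \ref{cor: ED defect isolated singularities} applies cleanly.
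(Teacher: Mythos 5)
Your argument for the identity \eqref{eq: zero locus leadcoef 1} and for the inclusion $\V(p_{\gEDeg(\sX)})\subseteq\sZ_{2,\sX}$ matches the paper's reasoning. The divergence is in how you establish the reverse inclusion $\sZ_{2,\sX}\subseteq\V(p_{\gEDeg(\sX)})$, and your route has a real gap there.

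You propose to show that a generic $Q\in\sZ_{2,\sX}$ has $\EDef_Q(\sX)\ge 1$ by exhibiting an ordinary node of $\sX\cap\sQ$ at the tangency point and invoking Corollary~\ref{cor: ED defect isolated singularities}. But that corollary (and Theorem~\ref{thm: ED defect general} from which it descends) is stated for \emph{smooth} $\sX$ and requires $\mathrm{Sing}(\sX\cap\sQ)$ to consist of isolated points. Proposition~\ref{pro: nonzero leadcoeff ED polynomial Q symbolic} only assumes $\sX$ irreducible. When $\sX$ has singularities, $\sX\cap\sQ$ typically inherits a positive-dimensional singular locus along $\sX_{\mathrm{sing}}\cap\sQ$, so Corollary~\ref{cor: ED defect isolated singularities} simply does not apply, and the ``dimension count on the Gauss map'' you sketch cannot remove this obstruction: the problem is not secondary tangencies, it is the ambient singularity of $\sX$ itself. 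Your argument would at best prove the proposition for smooth $\sX$.

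The paper avoids all of this. Once the identity \eqref{eq: zero locus leadcoef 1} shows the right-hand side is a \emph{proper} closed subvariety of $\PP(S^2V)$, it follows that $p_{\gEDeg(\sX)}$ is a nonzero polynomial, hence $\V(p_{\gEDeg(\sX)})$ is automatically a (possibly reducible) hypersurface. Combined with the inclusion $\V(p_{\gEDeg(\sX)})\subseteq\sZ_{2,\sX}$ and the fact that $\sZ_{2,\sX}=[\nu_2(\sX)]^\vee$ is an \emph{irreducible} hypersurface (Proposition~\ref{prop: variety Q not transversal X irreducible}), equality is forced: a hypersurface contained in an irreducible hypersurface must coincide with it. No ED-defect computation, no Milnor numbers, and no smoothness assumption are needed. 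You should replace your reverse-inclusion argument with this purely dimension-theoretic one.
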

\begin{proof}
Identity \eqref{eq: zero locus leadcoef 1} follows immediately by Proposition \ref{prop: degree and leadcoef extended ED polynomial}. Since the right-hand side is a proper closed subvariety of $\PP(S^2V)$, then necessarily $p_{\gEDeg(\sX)}(Q)$ has positive degree in the entries of $M_Q$.
Consider the variety $\sZ_{e,\sX}$ introduced in \eqref{eq: def Z d X} for $e=2$.
The inclusion $\V(p_{\gEDeg(\sX)})\subset \sZ_{2,\sX}$ follows by identity \eqref{eq: zero locus leadcoef 1} and Theorem \ref{thm: ED degree sum polar classes}.
Since $\V(p_{\gEDeg(\sX)})$ is a hypersurface, and $\sZ_{2,\sX}$ is an irreducible hypersurface by Proposition \ref{prop: variety Q not transversal X irreducible}, then necessarily $\V(p_{\gEDeg(\sX)})=\sZ_{2,\sX}$.
The last part results from Proposition \ref{prop: variety Q not transversal X irreducible}.
\end{proof}

For the rest of the section, we focus on the case when $\sX$ is a Segre-Veronese variety $\sV_{\bd,\bn}$.
Recall the notations given at the beginning of Section \ref{sec: product metrics tensor spaces}.

\begin{lemma}{\cite[Chapter 1, Corollary 5.10]{GKZ}}\label{lem: conditions for X mu dual hypersurface, in general}
The dual variety of the Segre-Veronese variety $\sV_{\bd,\bn}$ is a hypersurface in $\PP(S^\bd V^*)$ if and only if
\begin{equation}\label{eq: inequalities for X mu}
n_i\le\sum_{j\neq i}n_j
\end{equation}
for all indices $i$ such that $d_i=1$.
\end{lemma}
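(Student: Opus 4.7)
The statement is a classical criterion for dual non-defectivity of $\sV_{\bd,\bn}$; my plan is to verify it via a direct analysis of the contact locus of a generic tangent hyperplane, ultimately reducing to the GKZ hyperdeterminant criterion.

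First, by biduality, $\sV_{\bd,\bn}^\vee$ is a hypersurface in $\PP(S^\bd V^*)$ iff for a generic tangent hyperplane $H = \V(F)$ the contact locus $\mathrm{Cont}(H) = \{q \in (\sV_{\bd,\bn})_{\mathrm{sm}} : T_q \sV_{\bd,\bn} \subseteq H\}$ is zero-dimensional. Writing $F$ as a multihomogeneous polynomial of multidegree $\bd$ on $V_1 \times \cdots \times V_k$, the chain rule (applied to $\nu_\bd(p) = p_1^{d_1}\otimes\cdots\otimes p_k^{d_k}$) gives that $H$ is tangent to $\sV_{\bd,\bn}$ at $\nu_\bd(p)$ iff $(\partial_i F)(p) = 0 \in V_i^*$ for every $i \in [k]$. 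Differentiating these equations, an infinitesimal deformation $w = (w_1, \ldots, w_k) \in \bigoplus V_i$ preserves tangency at $p$ iff
\begin{equation*}
\sum_{j=1}^k (\partial_i \partial_j F)(p) \cdot w_j \;=\; 0 \;\in\; V_i^*, \qquad i = 1, \ldots, k.
\end{equation*}
After quotienting by the $k$ radial directions (which do not move $[p] \in \PP^\bn$), the Zariski tangent space to $\mathrm{Cont}(H)$ at $p$ is the effective kernel of the block-Hessian $M(F,p) = ((\partial_i \partial_j F)(p))_{i,j}$. Thus the dual is a hypersurface iff this kernel is trivial for generic $F$ and generic tangent $p$.

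I would then analyze $M(F,p)$ block by block. For any index $i$ with $d_i \geq 2$, the diagonal block $(\partial_i^2 F)(p) \in S^2 V_i^*$ is the Hessian at $p_i$ of the degree-$d_i$ form $F(p_1, \ldots, \cdot, \ldots, p_k)$ on $V_i$; for generic data it is nondegenerate modulo $\C p_i$. This reflects the classical fact that Veronese varieties of degree $\geq 2$ are dual non-defective, so each such factor only forces its corresponding $w_i$ to be radial and cannot itself create defect. For any index $j$ with $d_j = 1$, by contrast, $F$ is linear in the $V_j$-variables, so the diagonal block vanishes identically and the fate of $w_j$ is controlled entirely by off-diagonal partial contractions of $F$ with the remaining factor data.

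Finally, after eliminating the $w_i$ with $d_i \geq 2$ using the nondegenerate Veronese diagonal blocks, the residual linear system on the remaining variables $\{w_j : d_j = 1\}$ has exactly the shape of the tangential system appearing in the Segre case. By GKZ's classical hyperdeterminant criterion (Chapter 14 of \cite{GKZ}), this residual system has maximal effective rank for generic input iff $n_j \leq \sum_{\ell \neq j} n_\ell$ for every $j$ with $d_j = 1$; note that the sum ranges over \emph{all} $\ell \neq j$ because the Veronese factors, despite having been eliminated from the unknowns, still contribute their coordinates $p_\ell$ linearly to the off-diagonal blocks indexed by the Segre variables. The main obstacle is to check rigorously that the elimination step does not degenerate the rank of the Segre subsystem when the Veronese data vary; I would handle this by a transversality/semicontinuity argument, choosing generic $F$ and $p$ so that the Veronese Hessians pin down the $w_i$ ($d_i \geq 2$) uniquely without creating rank drops in the Segre part. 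Alternatively, one can invoke \cite[Chapter 1, Corollary 5.10]{GKZ} directly, which packages precisely this analysis.
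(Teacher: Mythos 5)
The paper does not actually prove this lemma; it cites \cite[Chapter~1, Corollary~5.10]{GKZ} directly, so there is no in-paper argument to compare against. Evaluated on its own merits, your sketch gets the right setup (the dual is a hypersurface iff the block Hessian $M(F,p)=\bigl((\partial_i\partial_j F)(p)\bigr)$ has kernel exactly the $k$-dimensional span of the Euler/radial directions for generic tangent data) and the right observations about the diagonal blocks ($\partial_i^2 F(p)$ vanishes when $d_i=1$; for $d_i\ge 2$ it is nondegenerate modulo $\C p_i$ at a generic $A_1$ critical point). But the reduction step has a genuine gap.

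The claim that ``after eliminating the $w_i$ with $d_i\ge 2$\dots the residual linear system on $\{w_j:d_j=1\}$ has exactly the shape of the tangential system appearing in the Segre case'' is not correct as stated. The Schur complement $S=M_{SS}-M_{SV}M_{VV}^{-1}M_{VS}$ does not have the zero-diagonal block structure of a Segre Hessian: its diagonal blocks $-M_{iV}M_{VV}^{-1}M_{Vi}$ are generically nonzero symmetric forms precisely because the Veronese factors couple to the Segre variables. So GKZ's Segre/hyperdeterminant criterion (Chapter 14) cannot be applied to $S$ as if it were a Segre Hessian. This matters quantitatively: a naive Segre-only reduction would produce the (stronger, incorrect) balance condition $n_i\le\sum_{j\ne i,\ d_j=1}n_j$, whereas the lemma's condition sums over all $j\ne i$, including the Veronese indices. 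You acknowledge this discrepancy informally (``the Veronese factors\dots still contribute their coordinates $p_\ell$''), but this is exactly the missing content: one needs an actual rank computation for $S$, not an appeal to the Segre case, and your proposed ``transversality/semicontinuity'' patch addresses rank drops but not the fact that the reduced system is of the wrong form. Your final fallback---invoking \cite[Chapter~1, Corollary~5.10]{GKZ}---is circular, as it is precisely the statement to be established.
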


The next lemma is a consequence of the results in \cite[\S1.4.B]{tevelev2003projectively} and is useful when considering the composition of a Segre-Veronese embedding with another Veronese embedding.

\begin{lemma}\label{lem: composing with a Veronese embedding}
Consider the Segre-Veronese variety $\sV_{\bd,\bn}\subset\PP(S^\bd V)$. For all $e\ge 2$, the Veronese embedding $\nu_e(\sV_{\bd,\bn})\subset\PP(S^e(S^\bd V))$ is degenerate, in particular, it is contained in a proper subspace $W$ of $\PP(S^e(S^\bd V))$ isomorphic to $\PP(S^{e\bd} V)$, where $e\bd=(ed_1,\ldots,ed_k)$.
Furthermore, the dual variety $\nu_e(\sV_{\bd,\bn})^\vee$ in $\PP(S^e(S^\bd V))$ can be described as the cone over the dual variety $\sV_{e\bd,\bn}^\vee$ in $\PP(S^{e\bd}V)$ with vertex corresponding to $W$.
\end{lemma}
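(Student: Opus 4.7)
My plan proceeds in two steps, following the general framework of \cite[\S1.4.B]{tevelev2003projectively}.

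\textbf{Step 1: Degeneracy and identification with $\sV_{e\bd,\bn}$.} On a decomposable element $T = \nu_\bd(x_1,\ldots,x_k) = x_1^{d_1}\otimes\cdots\otimes x_k^{d_k}$ of $\sV_{\bd,\bn}$, the composition $\nu_e\circ\nu_\bd$ returns $T^e = (x_1^{d_1}\otimes\cdots\otimes x_k^{d_k})^e \in S^e(S^\bd V)$. In characteristic zero, there is a canonical linear inclusion
\[
S^{e\bd}V = S^{ed_1}V_1\otimes\cdots\otimes S^{ed_k}V_k \hookrightarrow S^e(S^{d_1}V_1\otimes\cdots\otimes S^{d_k}V_k) = S^e(S^\bd V),
\]
obtained from the inclusions of symmetric tensors inside the full tensor algebra (equivalently, the invariants for $S_{ed_i}$ on $V_i^{\otimes ed_i}$ sit inside the invariants for the subgroup $S_e\ltimes (S_{d_i})^e$). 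Under this inclusion, the simple tensor $x_1^{ed_1}\otimes\cdots\otimes x_k^{ed_k}$ is sent precisely to $T^e$, so $\nu_e\circ\nu_\bd$ factors through $\nu_{e\bd}$ followed by this inclusion. Consequently, $\nu_e(\sV_{\bd,\bn})$ is contained in the linear subspace $W\coloneqq \PP(S^{e\bd}V)\subset \PP(S^e(S^\bd V))$ and coincides, as a subvariety of $W$, with $\sV_{e\bd,\bn}$.

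\textbf{Step 2: Cone structure of the dual variety.} Next, I would invoke the following general fact about projective duality: if $Y\subset W\subset \PP^M$ and $W$ is a linear subspace, then the dual variety $Y^\vee\subset (\PP^M)^*$ (computed in the ambient $\PP^M$) equals the cone over $Y^\vee_W\subset W^*$ with vertex $W^\perp\coloneqq\{H\in(\PP^M)^*\mid H\supset W\}$. The verification is elementary: a hyperplane $H\subset \PP^M$ contains the tangent space $T_yY$ at a smooth point $y\in Y$ if and only if either $H\supset W$ (which contributes exactly the linear space $W^\perp$) or $H\cap W$ is a hyperplane of $W$ containing $T_yY$ (i.e., $H\cap W \in Y^\vee_W$). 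Passing to closures, the linear projection $(\PP^M)^*\dashrightarrow W^*$ with center $W^\perp$ (restriction of linear forms to $W$) exhibits $Y^\vee$ as the preimage of $Y^\vee_W$, which is precisely the cone asserted.

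Applying Step~2 with $Y=\sV_{e\bd,\bn}$ and $W\cong \PP(S^{e\bd}V)$ inside $\PP(S^e(S^\bd V))$ then yields the claimed description of $\nu_e(\sV_{\bd,\bn})^\vee$. I do not expect a serious obstacle: the only point requiring care is to make the canonical inclusion $S^{e\bd}V\hookrightarrow S^e(S^\bd V)$ explicit enough to see that it sends the simple tensor $x_1^{ed_1}\otimes\cdots\otimes x_k^{ed_k}$ to $(x_1^{d_1}\otimes\cdots\otimes x_k^{d_k})^e$, so that the two Segre-Veronese images agree inside $W$; once this is established, the cone description of the dual is formal from Step~2.
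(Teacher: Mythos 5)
Your proof is correct. The paper itself gives no argument for this lemma, citing only Tevelev's \S1.4.B; your Step~1 (the canonical inclusion $S^{e\bd}V\hookrightarrow S^e(S^\bd V)$, realized via $S_e$- and $S_{ed_i}$-invariants inside $\bigotimes_i V_i^{\otimes ed_i}$, under which $x_1^{ed_1}\otimes\cdots\otimes x_k^{ed_k}$ maps to $(x_1^{d_1}\otimes\cdots\otimes x_k^{d_k})^e$) and Step~2 (a degenerate variety's dual is the cone with vertex $W^\perp$ over its dual computed inside $W$) are exactly the two standard facts that reference supplies, so your proposal is simply the same route spelled out in detail.
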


In the proof of Theorem \ref{thm: gen ED degree Segre-Veronese} we computed the degrees of the Chern classes $c_i(\sV_{\bd,\bn})$. These allow us to derive also the polar degrees of $\sV_{\bd,\bn}$:

\begin{proposition}\label{pro: polar degrees Segre-Veronese}
The polar degrees of the Segre-Veronese variety $\sV_{\bd,\bn}$ are
\begin{equation}\label{eq: polar classes Segre-Veronese}
\delta_j(\sV_{\bd,\bn})=\sum_{i=0}^{|\bn|-j}(-1)^{i}\binom{|\bn|+1-i}{j+1}(|\bn|-i)!\sum_{|\balpha|=i}\gamma_{\balpha}\,\bd^{\bn-\balpha}\quad\forall\,j\in\{0,\ldots,|\bn|\}\,,
\end{equation}
where $\gamma_{\balpha}$ is defined in \eqref{eq: def gamma alpha}.
When the dual variety $\sV_{\bd,\bn}^\vee$ is a hypersurface, its degree is
\begin{equation}\label{eq: degree dual Segre-Veronese}
\deg(\sV_{\bd,\bn}^\vee)=\delta_0(\sV_{\bd,\bn})=\sum_{i=0}^{|\bn|}(-1)^i(|\bn|+1-i)!\sum_{|\balpha|=i}\gamma_{\balpha}\,\bd^{\bn-\balpha}\,.
\end{equation}
\end{proposition}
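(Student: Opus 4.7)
The plan is to combine two already-established ingredients: Holme's formula \eqref{eq: Holme}, which expresses polar degrees in terms of Chern classes of the tangent bundle, and Proposition \ref{prop: degrees Chern classes Segre-Veronese}, which computes the degrees of those Chern classes for the Segre-Veronese variety. Since $\sV_{\bd,\bn}$ is smooth, no passage to Chern-Mather classes is needed and the substitution is direct.

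First I would set $n=\dim(\sV_{\bd,\bn})=|\bn|$ and let $h=c_1(\sO_{\sV_{\bd,\bn}}(1))$ denote the hyperplane class restricted from $\PP(S^{\bd} V)$. Recalling that under the identification $\sV_{\bd,\bn}\cong\PP^{\bn}$ this $h$ pulls back to $c_1(\sO_{\PP^{\bn}}(\bd))=d_1h_1+\cdots+d_kh_k$, the intersection number $c_j(\sV_{\bd,\bn})\cdot h^{n-j}$ is precisely the coefficient of $h_1^{n_1}\cdots h_k^{n_k}$ computed in the proof of Proposition \ref{prop: degrees Chern classes Segre-Veronese}. Thus
\[
c_j(\sV_{\bd,\bn})\cdot h^{n-j}=\deg(c_j(\sV_{\bd,\bn}))=(|\bn|-j)!\sum_{|\balpha|=j}\gamma_{\balpha}\,\bd^{\bn-\balpha}.
\]

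Next I would substitute this expression into Holme's formula \eqref{eq: Holme} (which, as recorded in the excerpt, remains valid also for singular $\sX$ after replacing Chern classes by Chern-Mather classes, so no smoothness issue arises). With $n=|\bn|$ we obtain
\[
\delta_j(\sV_{\bd,\bn})=\sum_{i=0}^{|\bn|-j}(-1)^{i}\binom{|\bn|+1-i}{j+1}\,c_i(\sV_{\bd,\bn})\cdot h^{|\bn|-i}=\sum_{i=0}^{|\bn|-j}(-1)^{i}\binom{|\bn|+1-i}{j+1}(|\bn|-i)!\sum_{|\balpha|=i}\gamma_{\balpha}\,\bd^{\bn-\balpha},
\]
which is precisely \eqref{eq: polar classes Segre-Veronese}.

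For the second part I would assume $\sV_{\bd,\bn}$ is dual nondefective, the combinatorial criterion being Lemma \ref{lem: conditions for X mu dual hypersurface, in general}. Then formula \eqref{eq: degdual} asserts $\deg(\sV_{\bd,\bn}^{\vee})=\delta_0(\sV_{\bd,\bn})$. Specializing \eqref{eq: polar classes Segre-Veronese} at $j=0$ and using the identity $\binom{|\bn|+1-i}{1}(|\bn|-i)!=(|\bn|+1-i)(|\bn|-i)!=(|\bn|+1-i)!$ yields \eqref{eq: degree dual Segre-Veronese}. Since the whole argument is a routine algebraic manipulation once the Chern-class degrees are known, I do not expect any genuine obstacle; the only care needed is to track the two conventions for $\deg(c_j(\sX))$ versus the intersection number $c_j(\sX)\cdot h^{n-j}$, which for smooth $\sV_{\bd,\bn}$ agree as integers.
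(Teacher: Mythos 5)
Your argument is correct and follows exactly the paper's own route: substitute the Chern-class degrees of Proposition~\ref{prop: degrees Chern classes Segre-Veronese} into Holme's formula~\eqref{eq: Holme} to get~\eqref{eq: polar classes Segre-Veronese}, then specialize to $j=0$ and invoke~\eqref{eq: degdual} for the degree of the dual hypersurface. The algebraic simplification $\binom{|\bn|+1-i}{1}(|\bn|-i)! = (|\bn|+1-i)!$ is checked and fine.
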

\begin{proof}
Since $\sV_{\bd,\bn}$ is a smooth variety, the degrees $\delta_j(\sV_{\bd,\bn})$ may be written in terms of the Chern classes of $\sV_{\bd,\bn}$ computed in Proposition \ref{prop: degrees Chern classes Segre-Veronese} and using the identity \eqref{eq: Holme}. In particular, when $\sV_{\bd,\bn}^\vee$ is a hypersurface, its degree coincides with the polar class $\delta_0(\sV_{\bd,\bn})$, so we apply the formula \eqref{eq: degdual}.
\end{proof}

\begin{corollary}\label{cor: dual variety e-Veronese embedding V d,n}
Consider the Segre-Veronese variety $\sV_{\bd,\bn}\subset\PP(S^\bd V)$. The locus of hypersurfaces of degree $e$ that have a nontransversal intersection with $\sV_{\bd,\bn}$ is (isomorphic to) the dual variety $[\nu_e(\sV_{\bd,\bn})]^\vee$ in $\PP(S^e(S^\bd V))$. Furthermore, it is always a hypersurface when $e\ge 2$, of degree
\begin{equation}\label{eq: degree dual Segre-Veronese d mu}
 \deg([\nu_e(\sV_{\bd,\bn})]^\vee)=\sum_{i=0}^{|\bn|}(-1)^i e^{|\bn|-i}(|\bn|+1-i)!\sum_{|\balpha|=i}\gamma_{\balpha}\bd^{\bn-\balpha}\,,
\end{equation}
where $\gamma_{\balpha}$ is defined in \eqref{eq: def gamma alpha}.
\end{corollary}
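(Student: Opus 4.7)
The plan is to combine three previously established results: Proposition \ref{prop: variety Q not transversal X irreducible} (identifying the nontransversality locus with the dual of a Veronese re-embedding), Lemma \ref{lem: composing with a Veronese embedding} (which lets us replace $\nu_e \circ \nu_{\bd}$ with $\nu_{e\bd}$ up to a cone), and Proposition \ref{pro: polar degrees Segre-Veronese} (the closed formula for $\deg(\sV_{\bd,\bn}^{\vee})$). First, specializing Proposition \ref{prop: variety Q not transversal X irreducible} to $\sX = \sV_{\bd,\bn}$, I obtain that the locus $\sZ_{e,\sV_{\bd,\bn}}$ of degree-$e$ hypersurfaces in $\PP(S^\bd V)$ having a nontransversal intersection with the Segre-Veronese variety is exactly $[\nu_e(\sV_{\bd,\bn})]^\vee$, sitting inside $\PP(S^e(S^\bd V))$, and that this variety is always a hypersurface when $e\ge 2$ since $\sV_{\bd,\bn}$ is irreducible.

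Next, to compute the degree of this dual hypersurface, I apply Lemma \ref{lem: composing with a Veronese embedding}: the composition $\nu_e \circ \nu_{\bd}$ factors through the linear subspace $W \cong \PP(S^{e\bd} V) \subset \PP(S^e(S^\bd V))$ and $\nu_e(\sV_{\bd,\bn})$ is projectively equivalent to $\sV_{e\bd,\bn}\subset W$. Consequently, $[\nu_e(\sV_{\bd,\bn})]^\vee$ is the cone in $\PP(S^e(S^\bd V))^*$ over $\sV_{e\bd,\bn}^{\vee}$ with vertex the linear subspace $W^\perp$. Since taking a linear cone over a hypersurface preserves its degree, we obtain
\[
\deg([\nu_e(\sV_{\bd,\bn})]^\vee) \;=\; \deg(\sV_{e\bd,\bn}^{\vee}).
\]
Here $\sV_{e\bd,\bn}^{\vee}$ is indeed a hypersurface by Lemma \ref{lem: conditions for X mu dual hypersurface, in general}: for $e\ge 2$ we have $ed_i\ge 2$ for all $i$, so the condition \eqref{eq: inequalities for X mu} is vacuously satisfied.

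Finally, I substitute $\bd \leadsto e\bd$ in formula \eqref{eq: degree dual Segre-Veronese} of Proposition \ref{pro: polar degrees Segre-Veronese}. Using the elementary identity
\[
(e\bd)^{\bn-\balpha} \;=\; \prod_{j=1}^{k}(ed_j)^{n_j-\alpha_j} \;=\; e^{|\bn|-|\balpha|}\,\bd^{\bn-\balpha},
\]
valid for every multi-index $\balpha$ with $|\balpha|=i$, the sum becomes
\[
\deg(\sV_{e\bd,\bn}^{\vee}) \;=\; \sum_{i=0}^{|\bn|}(-1)^i\,e^{|\bn|-i}\,(|\bn|+1-i)!\sum_{|\balpha|=i}\gamma_{\balpha}\,\bd^{\bn-\balpha},
\]
which is exactly the right-hand side of \eqref{eq: degree dual Segre-Veronese d mu}, completing the proof.

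Since all the heavy lifting is contained in the cited results, no step looks genuinely difficult; the only point requiring a little care is verifying that ``cone preserves degree'' in the precise projective-duality formulation used above. This could alternatively be justified by intersecting $[\nu_e(\sV_{\bd,\bn})]^\vee$ with a generic line in $\PP(S^e(S^\bd V))^*$ and using that, after the projection away from $W^\perp$, such a line maps birationally to a generic line in $W^*$ meeting $\sV_{e\bd,\bn}^\vee$ in $\deg(\sV_{e\bd,\bn}^{\vee})$ points.
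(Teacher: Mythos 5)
Your proof is correct and follows the paper's own argument step by step: identify the nontransversality locus with $[\nu_e(\sV_{\bd,\bn})]^\vee$ via Proposition \ref{prop: variety Q not transversal X irreducible}, reduce to $\sV_{e\bd,\bn}^\vee$ via Lemma \ref{lem: composing with a Veronese embedding}, check the hypersurface condition of Lemma \ref{lem: conditions for X mu dual hypersurface, in general}, and plug $e\bd$ into Proposition \ref{pro: polar degrees Segre-Veronese}. The only thing you add is the worked-out identity $(e\bd)^{\bn-\balpha}=e^{|\bn|-i}\bd^{\bn-\balpha}$ and a short justification that a cone over a hypersurface has the same degree, both of which are correct and make the argument a bit more self-contained.
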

\begin{proof}
The first part follows by Proposition \ref{prop: variety Q not transversal X irreducible}.
Consider now the nondegenerate Segre-Veronese variety $\sV_{e\bd,\bn}\subset\PP(S^{e\bd}V)$. By Lemma \ref{lem: conditions for X mu dual hypersurface, in general}, the dual variety of $\sV_{e\bd,\bn}$ is a hypersurface for all $e\ge 2$. This means that, applying Lemma \ref{lem: composing with a Veronese embedding}, the dual variety $[\nu_e(\sV_{\bd,\bn})]^\vee$ in $\PP(S^e(S^\bd V))$ is a hypersurface as well, with the same degree of $\sV_{e\bd,\bn}^\vee$, computed using Proposition \ref{pro: polar degrees Segre-Veronese}.
\end{proof}

We consider two applications of Corollary \ref{cor: dual variety e-Veronese embedding V d,n}.

\begin{example}
Let $k=1$, $\bd=(d)$ and $\bn=(1)$, hence we are considering the rational normal curve $\sV_{d,1}\subset\PP(S^d\C^2)=\PP^d_{\mC}$. Then
\begin{equation}\label{eq: degree dual nu_e(V_{d,1})}
\deg([\nu_e(\sV_{d,1})]^\vee) = \deg(\sV_{ed,1}^\vee) = \sum_{i=0}^1(-1)^i(ed)^{1-i}(2-i)!\gamma_i = 2ed\gamma_0-\gamma_1 = 2(ed-1)\,,
\end{equation}
since in this case $\gamma_0=1$ and $\gamma_1=2$. Note that $2(ed-1)$ is the degree of the discriminant of a binary form of degree $ed$.
In particular, for $e=2$, we obtain that the leading coefficient of the extended ED polynomial $\mathrm{EDpoly}_{\sV_{d,1}}(u,\varepsilon^2,Q)$ has degree $4d-2$.\hfill$\diamondsuit$
\end{example}

\begin{example}
Let $k=2$, $\bd=(d_1,d_2)$ and $\bn=(1,1)$. Then
\begin{align}
\begin{split}
 \deg([\nu_e(\sV_{\bd,\bn})]^\vee) &= \deg(\sV_{e\bd,\bn}^\vee)\\
 &= \sum_{i=0}^2(-1)^ie^{2-i}(3-i)!\sum_{|\balpha|=i}\gamma_{\balpha}\bd^{\bn-\balpha}\\
 &= 6e^2\gamma_{(0,0)}d_1d_2-2e(\gamma_{(1,0)}d_2+\gamma_{(0,1)}d_1)+\gamma_{(1,1)}\\
 &= 6e^2d_1d_2-4e(d_1+d_2)+4\,,
\end{split}
\end{align}
since in this case $\gamma_{(0,0)}=1$, $\gamma_{(1,0)}=\gamma_{(0,1)}=2$, and $\gamma_{(1,1)}=4$. In particular, for $\bd=(1,1)$ we are considering the Segre variety $\Sigma_\bn=\Sigma_2$ and
\[
\deg([\nu_e(\Sigma_2)]^\vee) = 6e^2-8e+4\,.
\]
As a consequence, the leading coefficient of the extended ED polynomial $\mathrm{EDpoly}_{\Sigma_2}(u,\varepsilon^2,Q)$ is a homogeneous polynomial in 10 variables of degree 12.\hfill$\diamondsuit$
\end{example}

Our main Conjecture \ref{conj: main} has a natural counterpart in terms of extended ED polynomials of Segre-Veronese varieties.

\begin{corollary}\label{cor: main conj revisited}
Consider the Segre-Veronese variety $\sV_{\bd,\bn}\subset\PP(S^\bd V)$ and the expansion of its extended ED polynomial
\[
\mathrm{EDpoly}_{\sV_{\bd,\bn}}(u,Q,\varepsilon^2) = \sum_{i=0}^{\gEDeg(\sV_{\bd,\bn})}p_i(u,Q)\,\varepsilon^{2i}\,.
\]
For all $0\le i\le \gEDeg(\sV_{\bd,\bn}))$, define the affine cone in $S^2(S^{\bd} V)$
\begin{equation}\label{eq: def varieties vanishing coefficients ED poly}
\sW_{\sX}^{(i)} \coloneqq \bigcap_{j=0}^i\{Q\in S^2(S^{\bd} V) \mid \text{$p_{\gEDeg(\sV_{\bd,\bn})-j}(u,Q)=0$ for all $u\in S^{\bd} V$}\}\,.
\end{equation}
Then Conjecture \ref{conj: main} is true if and only if
\begin{equation}\label{eq: empty intersection cone pd matrices}
\sW_{\sX}^{(i)} \cap S^2_+(S^{\bd} V^{\mR}) = \emptyset\quad\forall\,i\ge\EDef_{Q_F}(\sV_{\bd,\bn})\,,
\end{equation}
where $Q_F\in S^2_+(S^{\bd} V^{\mR})$ is a Frobenius inner product.
\end{corollary}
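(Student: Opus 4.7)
The proof will be a direct translation of Conjecture \ref{conj: main} into the language of vanishing of the coefficients of the extended ED polynomial, with Proposition \ref{prop: epsilon-degree ED polynomial} as the single tool doing the work. Since every positive definite inner product is \emph{a fortiori} nondegenerate, that proposition applies to every $Q\in S^2_+(S^{\bd}V^{\mR})$ and guarantees that the specialization $\mathrm{EDpoly}_{\sV_{\bd,\bn}}(u,Q,\varepsilon^2)$ has $\varepsilon^2$-degree \emph{exactly} $\EDeg_Q(\sV_{\bd,\bn})$ with leading coefficient not identically zero in $u$.

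First, I will build the dictionary between membership in $\sW_\sX^{(i)}$ and the ED defect. For any nondegenerate $Q$ and for each $j \ge 0$, Proposition \ref{prop: epsilon-degree ED polynomial} gives that the coefficient $p_{\gEDeg(\sV_{\bd,\bn})-j}(u,Q)$ is identically zero in $u$ if and only if $\gEDeg(\sV_{\bd,\bn})-j > \EDeg_Q(\sV_{\bd,\bn})$, i.e., $j < \EDef_Q(\sV_{\bd,\bn})$. Intersecting these conditions over $j=0,\dots,i$ as in \eqref{eq: def varieties vanishing coefficients ED poly} yields the equivalence
\[
Q \in \sW_\sX^{(i)} \quad \Longleftrightarrow \quad \EDef_Q(\sV_{\bd,\bn}) \ge i+1.
\]

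Second, I will rewrite Conjecture \ref{conj: main} in its $\EDef$-form $\EDef_Q(\sV_{\bd,\bn}) \le \EDef_{Q_F}(\sV_{\bd,\bn})$ for every $Q \in S^2_+(S^{\bd}V^{\mR})$, and apply the dictionary above with $i = \EDef_{Q_F}(\sV_{\bd,\bn})$. The conjecture then becomes the assertion that no positive definite $Q$ satisfies $\EDef_Q(\sV_{\bd,\bn}) \ge \EDef_{Q_F}(\sV_{\bd,\bn})+1$, equivalently
\[
\sW_\sX^{(\EDef_{Q_F}(\sV_{\bd,\bn}))} \cap S^2_+(S^{\bd}V^{\mR}) = \emptyset.
\]

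Finally, I will observe the trivial monotonicity $\sW_\sX^{(i+1)} \subseteq \sW_\sX^{(i)}$ coming straight from \eqref{eq: def varieties vanishing coefficients ED poly}. This upgrades the single vanishing above to the family of vanishings
\[
\sW_\sX^{(i)} \cap S^2_+(S^{\bd}V^{\mR}) = \emptyset \qquad \forall\, i \ge \EDef_{Q_F}(\sV_{\bd,\bn}),
\]
establishing the stated equivalence. The argument is essentially a bookkeeping exercise; there is no serious obstacle, the only delicate point being to verify the ``iff'' in the dictionary at the $j$-th step, which relies precisely on the nonvanishing of the leading coefficient in Proposition \ref{prop: epsilon-degree ED polynomial} applied to $\EDeg_Q(\sV_{\bd,\bn})-$indexed specialization.
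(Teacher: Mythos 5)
Your proof is correct and follows essentially the same route as the paper's, making explicit the equivalence $Q\in\sW_\sX^{(i)}\Leftrightarrow\EDef_Q(\sV_{\bd,\bn})\ge i+1$ and then translating Conjecture~\ref{conj: main} into its $\EDef$-form. One phrasing issue is worth flagging: your per-$j$ biconditional (``$p_{\gEDeg(\sV_{\bd,\bn})-j}(u,Q)$ is identically zero in $u$ if and only if $j<\EDef_Q(\sV_{\bd,\bn})$'') overstates Proposition~\ref{prop: epsilon-degree ED polynomial}, which only guarantees vanishing for $\ell>\EDeg_Q(\sX)$ and nonvanishing at $\ell=\EDeg_Q(\sX)$, saying nothing about possible accidental vanishing for $\ell<\EDeg_Q(\sX)$ (i.e., for $j>\EDef_Q$). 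The intersection-level equivalence you need is nonetheless correct, because $\sW_\sX^{(i)}$ is an intersection over the \emph{consecutive} indices $j=0,\dots,i$: if $\EDef_Q\le i$, the index $j=\EDef_Q$ is included and the nonvanishing of $p_{\EDeg_Q}(u,Q)$ obstructs membership, while if $\EDef_Q\ge i+1$, every $j\le i$ gives a coefficient of $\varepsilon^2$-degree strictly above $\EDeg_Q$, which does vanish by the proposition; so the argument is sound, and only the per-$j$ ``iff'' should be weakened to the one-sided implications that Proposition~\ref{prop: epsilon-degree ED polynomial} actually provides.
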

% \begin{proof}
% Assume Conjecture \ref{conj: main} is true. Let $\widetilde{Q}\in \sW_{\sX}^{(i)}\cap S^2_+(S^{\bd} V^{\mR})$ for some $i\ge\EDef_{Q_F}(\sV_{\bd,\bn})$. Then the polynomial coefficient $p_\ell(u,Q)$ of the extended ED polynomial of $\sV_{\bd,\bn}$ vanishes when evaluated at $\widetilde{Q}$ for all $\ell\ge \EDeg_{Q_F}(\sV_{\bd,\bn})$ and for any Frobenius inner product $Q_F$.
% Using Proposition \ref{prop: epsilon-degree ED polynomial}, we conclude that necessarily
% \begin{equation}\label{eq: strictly smaller}
% \EDeg_{\widetilde{Q}}(\sV_{\bd,\bn})<\EDeg_{Q_F}(\sV_{\bd,\bn})\,, 
% \end{equation}
% contradicting Conjecture \ref{conj: main}. Since all implications can be reversed, we conclude that the condition \eqref{eq: empty intersection cone pd matrices} is equivalent to the validity of Conjecture \ref{conj: main}.
% \end{proof}

\bibliographystyle{alpha}
\bibliography{bibliography.bib}

\end{document}